\newtheorem{theorem}{Theorem}[section]
\newtheorem{lemma}[theorem]{Lemma}
\newtheorem{corollary}[theorem]{Corollary}
\newtheorem{definition}[theorem]{Definition}
\newtheorem{proposition}[theorem]{Proposition}
\newtheorem{remark}[theorem]{Remark}
\numberwithin{equation}{section}
\def\beq{\begin{equation}\displaystyle}
\def\eeq{\end{equation}}
\def\bel{\begin{equation} \displaystyle \begin{array}{l} }
\def\eel{\end{array} \end{equation} }
\def\bell{\begin{equation} \displaystyle \begin{array}{ll}  }
\def\eell{\end{array} \end{equation} }
\def\bea{\begin{eqnarray}}
\def\eea{\end{eqnarray} }
\def\bean{\begin{eqnarray*}}
\def\eean{\end{eqnarray*} }
\newenvironment{proof}{\noindent{\bf Proof.~}}
{{\mbox{}\hfill {\small \fbox{}}\\}}
\def\qed{\mbox{}\hfill {\small \fbox{}}\\}
\renewcommand\appendix{\bigskip {\noindent \Large \bf Appendix}
  \setcounter{section}{0}%
  \setcounter{subsection}{0}%
\setcounter{equation}{0}%
\setcounter{theorem}{0}%
\def\thetheorem{A.\arabic{theorem}}
\def\theequation {A.\arabic{equation}}}
\def\NN{\mathbb{N}}
\def\RR{\mathbb{R}}
\def\ds{\displaystyle}
\def\bs{\bigskip}
\def\eps{\varepsilon}
\def\bar#1{{\overline #1}}
\def\pa{\partial}
\def\calM{{\cal M}}
\def\smes{{\cal S}_{\cal M}}
\def\Dpetit{{\mbox{\tiny $\Delta$}}}
\def\achapo{\widehat{a}}
\def\bchapo{\widehat{b}}
\begin{document}

\bs
\begin{center}
\Large \bf Chemotaxis: from kinetic equations to aggregate dynamics 
\end{center}

\bs
\begin{center}
F. James$^a$ and N. Vauchelet$^{b}$

\bs

 \medskip

 {\footnotesize $^a$ 
Math\'ematiques -- Analyse, Probabilit\'es, Mod\'elisation -- Orl\'eans (MAPMO), \\
Universit\'e d'Orl\'eans \& CNRS UMR 6628, \\
F\'ed\'eration Denis Poisson, Universit\'e d'Orl\'eans \& CNRS FR 2964, \\
45067 Orl\'eans Cedex 2, France}
 \\

 {\footnotesize $^b$ 
UPMC Univ Paris 06, UMR 7598, Laboratoire Jacques-Louis Lions, \\
CNRS, UMR 7598, Laboratoire Jacques-Louis Lions and \\
INRIA Paris-Rocquencourt, Equipe BANG \\
F-75005, Paris, France}
 \\

 \medskip
 {\footnotesize {\em E-mail addresses:} 
 {\tt francois.james@univ-orleans.fr, vauchelet@ann.jussieu.fr}
 }

\end{center}

\bs

\begin{abstract}
The hydrodynamic limit for a kinetic model of chemotaxis is investigated. The limit equation is a non local conservation law,
for which finite time blow-up occurs, giving rise to measure-valued solutions and discontinuous velocities. An adaptation of
the notion of duality solutions, introduced for linear equations with discontinuous coefficients, leads to an existence result.
Uniqueness is obtained through a precise definition of the nonlinear flux as well as the complete dynamics of aggregates, i.e.
combinations of Dirac masses. Finally a particle method is used to build an adapted numerical scheme.
\end{abstract}

\bs

{\bf Keywords: } duality solutions, non local conservation equations, hydrodynamic limit, measure-valued solutions, chemotaxis.

{\bf 2010 AMS subject classifications: } 35B40, 35D30, 35L60, 35Q92.

\bs

\section{Introduction}

Kinetic frameworks have been investigated to describe the chemotactic movement of cells in 
the presence of a chemical substance 
since in the 80's experimental observations showed that the 
motion of bacteria (e.g. {\it Escherichia Coli}) is due to the alternation of `runs and tumbles'.
The so-called Othmer-Dunbar-Alt model \cite{alt,erbanothmerbacterie,othdunalt,othste} describes
the evolution of the distribution function of cells at time $t$, position $x$ and velocity
$v$, assumed to have a constant modulus $c>0$, as well as the concentration $S(t,x)$ of the involved chemical.
A general formulation for this model can be written as
\beq\label{cinetique}
\left\{
\begin{array}{rcl}
\ds \pa_t f_\eps + v\cdot \nabla_x f_\eps &=& \ds\frac{1}{\eps} \int_{|v'|=c} \big(T[S_\eps](v'\to v) f_\eps(v') - T[S_\eps](v\to v') f_\eps(v)\big) \,dv',    \\
\ds -\Delta S_\eps + S_\eps &=& \rho_\eps(t,x) := \ds\int_{|v|=c} f_\eps(t,x,v)\,dv.
\end{array}\right.
\eeq
The second equation describes the dynamics of the chemical agent which diffuses in the domain. It is produced by the cells themselves 
with a rate proportional to the density of cells $\rho$ and disappears with a rate proportional to $S$. 
The transport operator on the left-hand side of the first equation stands for the unbiased movement of cells (`runs'), while the
right-hand side governs `tumbles', that is chemotactic orientation, or taxis, through the 
turning kernel $T[S](v' \to v)$, which is the rate of cells changing their velocity from $v'$ to $v$. 

The parameter $\eps$ corresponds to the time interval of information sampling for the bacteria, usually $\eps\ll1$, and when it goes to zero, one
expects to recover the collective behaviour of the population, that is a macroscopic equation for the density $\rho(t,x)$ of cells.
Such derivations have been proposed by several authors. 
When the taxis is small compared to the unbiased movement of cells, the scaling must be of diffusive type, so that the limit
equations are of diffusion or drift-diffusion type, see for instance \cite{chalubperth}
for a rigorous proof. In \cite{hillenothmer, othhill}, 
the authors show that the classical Patlak-Keller-Segel model can 
be obtained in a diffusive limit for a given smooth chemoattractant
concentration.

We focus here on the opposite case, that is when taxis dominates the unbiased movements. This is accounted for in the model 
by the choice of the scaling in equation \eqref{cinetique}.
Moreover, we consider positive chemotaxis, which means that the involved chemical is attracting cells, and therefore is called
chemoattractant.
The model has been proposed in \cite{dolschmeis}, several works have been devoted to the mathematical study of this kinetic
system. Existence of solutions has been obtained for various assumptions
on the turning kernel in \cite{chalubperth,bourncalv,erbanhwang,hwang}.
Numerical simulations of this system are proposed in \cite{nv}.
The limit problem is usually of hyperbolic type, see for instance \cite{filblaurpert, perthame, perthame2} for a hyperbolic limit
 model  which consists in a conservation equation for the cell density and a momentum balance equation. 

It is not difficult to obtain the following formal hydrodynamic limit to equation \eqref{cinetique}, more precisely on the total density of particles
$\rho=\lim_\eps\rho_\eps$: 
\begin{equation}
\ds \pa_t \rho + {\mathop{\rm div}}_x \big(a[S] \rho\big) = 0, \qquad
\ds -\Delta_{xx} S +S = \rho.
\label{Hydro1}\end{equation}
Here the macroscopic velocity $a[S]$ depends on the chemoattractant concentration $S$ through the turning kernel.
This system of equations has been obtained in \cite{dolschmeis}, with a rigorous  proof in the two-dimensional setting for a fixed smooth $S$,
and therefore a bounded density $\rho$. 
The aim of this paper is to obtain rigorously this limit for the whole coupled system. 
Severe difficulties arise then mainly due to the lack of estimates
for the solutions to the kinetic model when $\eps$ goes to zero and
consequently to the very weak regularity of the solutions 
to the limit problem. 

It turns out that the limit equation is in some sense 
a weakly nonlinear conservation equation on the density $\rho$.
Indeed the expected velocity field depends on $\rho$, 
but through $S$, and therefore in a non local way. Actually it can be written
as a variant of the so-called aggregation equation, for which blow-up in finite time
is evidenced (see e.g. \cite{bertozzi2}), leading to measure-valued solutions.
In this respect, this equation behaves also like
linear equations with discontinuous coefficients. 
In particular Dirac masses can arise,
this is the mathematical formulation of the aggregation of bacteria.
Therefore $S$ is no longer smooth, and a major difficulty
in this study will be to define properly the velocity field 
$a=a[S]$ and the product $a\rho$.

The viewpoint of the aggregation equation has been extensively studied by Carrillo {\sl et al.} \cite{Carrillo}
through optimal transport techniques. Existence and uniqueness are obtained in a very weak sense, and the
dynamics of aggregates is also given. We propose here another approach, 
based on the notion of duality solutions, as introduced in the linear case by Bouchut and James \cite{bj1}.
The main drawback is that presently we have to restrict
ourselves to the one-dimensional case, since the theory in higher 
dimensions is not complete yet (see \cite{bjm}).
The approach proposed by Poupaud and Rascle \cite{pouras}, 
which coincides with duality in the 1-d case, could also be explored.
Notice however, that the properties of the expected velocity field $a$ in the two-dimensional
case are not obvious either.

More precisely, we propose to proceed in a similar way as in \cite{BJpg}, 
where the nonlinear system of zero pressure gas dynamics is interpreted as a 
system of two linear conservation equations coupled through 
the definition of the product. This last point turns out to be crucial
in order to obtain a proper uniqueness result for the system \eqref{Hydro1}.
In this work, the product $a\rho$ will be defined thanks to the 
limiting flux of the kinetic system \eqref{cinetique}
(see also \cite{note} for another application of the same idea).
As we shall see, this is closely related to the dynamics of aggregates, that is combinations of Dirac masses,
which reflect some kind of collective behaviour of the population. Finally, an important application of this aggregate dynamics
is the development of a numerical scheme, based on a particle method. The motion and collapsing
of Dirac masses is clearly evidenced.

The paper is organized as follows. In Section \ref{models} we precisely state the model.
Section \ref{SecDuality} is devoted to the notion of duality solutions, and contains the main results of this article.
Some technical properties
which will be useful for the rest of the paper are given in Section \ref{SecPropS}.
Then we investigate in Section \ref{SecExistDual} the proof of the 
existence and uniqueness result of duality solution for system
\eqref{eqrhohydro}--\eqref{eqShydro} stated in Theorem \ref{ExistFlux}. 
In Section \ref{SecConvKinet} we prove the rigorous derivation of the hydrodynamical system from 
the kinetic system.
Finally, the dynamics of aggregates and the numerical scheme for the limit equation are described 
in the last section, where numerical illustrations are also provided.

\section{Modelling}\label{models}
From now on we focus on the one dimensional version of the problem, so that $x\in\RR$. 
We first recall the main assumptions leading to the kinetic equation, next we proceed to the formal 
limit.

\subsection{Kinetic model}
In this work, cells are supposed to be large enough to sense
the gradient of the chemoattractant instantly. Therefore
the turning kernel takes the form (independent on $v$)
	\beq\label{T}
T[S](v'\to v) = \Phi(v'\pa_x S).
	\eeq
The function $\Phi$ is the turning rate, obviously it has to be positive.
More precisely, for attractive chemotaxis, the turning rate is smaller 
if cells swim in a favourable direction, that is 
$v\cdot \nabla_x S\ge 0$. 
Thus $\Phi$ should be a non increasing function.
A simplified model for this phenomenon is the following choice for
$\Phi$: we fix a positive parameter $\alpha$, a mean turning rate $\phi_0>0$
and take
\beq\label{phisym}
\Phi(x)=\phi_0\big(1+\phi(x)\big),
\eeq
where $\phi$ is an odd function such that
\beq\label{phi}
\phi\in C^\infty(\RR), \quad \phi' \leq 0, \quad
\phi(x)= \begin{cases}	+\lambda & \mbox{ if } x<-\alpha, \\
			-\lambda & \mbox{ if } x>\alpha,
	\end{cases}
\eeq
where $0<\lambda<1$ is a given constant. 

Now since the transport occurs in $\RR$ the set of velocities is $V = \{-c,c\}$, and the expression 
of the turning kernel simplifies in such a way that \eqref{cinetique} rewrites
	\beq\label{cin1D}
\pa_t f_\eps + v \pa_x f_\eps = \frac{1}{\eps} (\Phi(- v\pa_x S) f_\eps(-v) - 
\Phi(v\pa_x S) f_\eps(v)), \qquad v\in V.
	\eeq
	\beq\label{ellip1D}
-\pa_{xx}S_\eps + S_\eps = \rho_\eps = f_\eps(c) + f_\eps(-c).
	\eeq

The existence of weak solutions in a $L^p$ setting for a slightly different system in a more general 
framework has been obtained for instance in \cite{bourncalv,hwang}. 
Concerning precisely this model, we refer to \cite{nv} for the existence theory in any space dimension.
Notice that no uniform $L^\infty$ bounds can be expected. The reader is referred to \cite{nv} for some numerical evidences of this phenomenon, 
which is the mathematical translation of the concentration of bacteria. This is some kind of ``blow-up in infinite time'',
which for $\eps=0$ leads to actual blow-up in finite time, and creation of Dirac masses.
Moreover the balanced distribution vanishing the right 
hand side of \eqref{cin1D} depends on $S_\eps$; thus the techniques 
developed e.g. in \cite{chalubperth} cannot be applied.

\subsection{Formal hydrodynamic limit}\label{ss-sec:limit}
We formally let $\eps$ go to $0$ assuming that $S_\eps$ and $f_\eps$ admit a Hilbert expansion
	$$
f_\eps = f_0 + \eps f_1 + \cdots, \qquad S_\eps = S_0 + \eps S_1 + \cdots
	$$
Multiplying \eqref{cin1D} by $\eps$ and taking $\eps=0$, we find 
\beq\label{equilib}
\Phi(- c\pa_x S_0) f_0(-c) = \Phi(c\pa_x S_0) f_0(c).
\eeq
Summing equations $\eqref{cin1D}$ for $c$ and $-c$, we obtain
\beq\label{moment}
\pa_t (f_\eps(c) + f_\eps(-c)) + c \pa_x (f_\eps(c) - f_\eps(-c)) = 0.
\eeq
Moreover, from equation \eqref{equilib} we deduce that
$$
f_0(c)-f_0(-c) = \frac{\Phi(- c\pa_x S_0)-\Phi(c\pa_x S_0)}
{\Phi(- c\pa_x S_0)+\Phi(c\pa_x S_0)} (f_0(c) + f_0(-c)).
$$
The density at equilibrium is defined by $\rho := f_0(c) + f_0(-c)$. 
Taking $\eps = 0$ in \eqref{moment} we finally obtain
$$
\pa_t \rho + \pa_x (a(\pa_x S_0) \rho) = 0,
$$
where $a$ is defined by
$$
a(\pa_x S_0) = c\, \frac{\Phi(- c\pa_x S_0)-\Phi(c\pa_x S_0)}
{\Phi(- c\pa_x S_0)+\Phi(c\pa_x S_0)}= -c\,\phi(c\pa_x S_0),
$$
and we have used \eqref{phisym} for the last identity.
Notice that $a$ is actually a macroscopic quantity, since it is the simplified formulation of
	$$
a(\pa_xS_0) = -\frac {\int_V v\,\Phi(v\pa_xS_0)\,dv}
{\int_V \Phi(v\pa_xS_0)\,dv}
	$$
in the one-dimensional context.

We couple this equation with the limit of the elliptic problem 
\eqref{ellip1D} for the chemoattractant concentration, so that, 
in summary, and dropping the index $0$, the formal hydrodynamic 
limit is the following system
\begin{eqnarray}
&&\ds \pa_t \rho + \pa_x (a(\pa_x S) \rho) = 0, 
\label{eqrhohydro} \\[2mm]
&&\ds a(\pa_xS)= -c\,\phi(c\pa_xS),
\label{eqahydro} \\[2mm]
&&\ds -\pa_{xx} S +S = \rho, 
\label{eqShydro}
\end{eqnarray}
complemented with the boundary conditions
\beq\label{bordhydro}
\rho(t=0,x)=\rho^{ini}(x), \qquad
\lim_{x\to \pm \infty} \rho(t,x) = 0, \qquad 
\lim_{x\to \pm \infty} S(t,x)=0. 
\eeq

We now give the precise formulation of the limit system in terms of aggregate equation.
Noticing that a solution to \eqref{eqShydro} has
 the explicit expression
	\beq\label{Sexplicit}
S(t,x) = K*\rho(t,.)(x),\quad\mbox{where }K(x) = \frac{1}{2}e^{-|x|},
	\eeq
the macroscopic conservation equation for $\rho$ \eqref{eqrhohydro} can be rewritten
	$$
\pa_t \rho + \pa_x(a(\pa_x K * \rho) \rho) = 0.
	$$
When $a$ is the identity function, this is exactly the so-called aggregation equation, and since the potential
is  non-smooth, blow-up in finite time is expected. We refer the reader to e.g. \cite{bertozzi2,Carrillo}, 
and \cite{jamesnv} in the context of chemotaxis.

Similar problems were encountered for instance in \cite{NPS}, where the authors investigate the high field limit
of the Vlasov-Poisson-Fokker-Planck model in one space dimension.
The limit system is a scalar conservation law coupled to the Poisson equation,
and a proper definition of the product is needed to pass to the limit. This definition 
has been extended in two dimensions by Poupaud \cite{poupaud} using 
defect measures but losing uniqueness.

\section{Duality solutions}\label{SecDuality}
\subsection{Notations}

Let $C_0(Y,Z)$ be the set of continuous functions from $Y$ to $Z$ 
that vanish at infinity and $C_c(Y,Z)$ the set of continuous 
functions with compact support from $Y$ to $Z$.
All along the paper, we denote $\calM_{loc}(\RR)$ the space 
of local Borel measures on $\RR$. For $\rho\in {\cal M}_{loc}$
we denote by $|\rho|(\RR)$ its total variation.
We will denote $\calM_b(\RR)$ the space of measures in $\calM_{loc}(\RR)$
whose total variation is finite.
From now on, the space of measure-valued function $\calM_b(\RR)$ is
always endowed with the weak topology $\sigma({\cal M}_b,C_0)$.
We denote $\smes :=C([0,T];{\cal M}_b(\RR)-\sigma({\cal M}_b,C_0))$.

We recall that if a sequence of measure $(\mu_n)_{n\in \NN}$ 
in $\calM_b(\RR)$ satisfies $\sup_{n\in \NN} |\mu_n|(\RR)<+\infty$,
then we can extract a subsequence that converges for the weak topology
$\sigma({\cal M}_b,C_0)$.


The coupled system \eqref{eqrhohydro}--\eqref{eqahydro}--\eqref{eqShydro} 
is interpreted in this context as a linear conservation equation
\eqref{eqrhohydro}, the velocity $b$ of which depends on the solution $S$ 
to the elliptic equation \eqref{eqShydro}, $b=a(\pa_xS)$. This actually means
that equation \eqref{eqrhohydro} is somehow nonlinear.
One convenient tool to handle such conservation equations
	\beq\label{eq.conserve}
\partial_t\rho + \pa_x(b\rho) = 0, \qquad b \mbox{ being a given function},
	\eeq
whose solutions eventually are measures in space, is the notion of duality solutions, introduced
in \cite{bj1}. 

\subsection{Linear conservation equations}\label{duality}
Duality solutions are defined as weak solutions, the test functions being Lipschitz  solutions to the backward linear transport equation
	\begin{equation}
\partial_t p + b(t,x) \partial_x p = 0, 
    \quad p(T,.) = p^T \in {\rm Lip}(\RR).
	\label{(3)}\end{equation}
A key point to ensure existence of smooth solutions to \eqref{(3)} is that the velocity field has to be compressive, in the following sense.
	\begin{definition}
We say that the function $b$ satisfies the so-called one-sided Lipschitz condition (OSL condition) if
\begin{equation}\label{OSLC}
\partial_x b(t,.)\leq \beta(t)\qquad\mbox{for $\beta\in L^1(0,T)$ in the distributional sense}.
\eeq
	\end{definition}
A formal computation shows that 
$\partial_t (p\rho) + \partial_x [b(t,x) p\rho] = 0$, and thus
	\begin{equation}
\frac{d}{dt}\left(\int_{\RR}p(t,x)\rho(t,dx)\right) = 0,
	\label{(4)}\end{equation}
which defines the duality solutions for suitable $p$'s. 
It is now quite classical that (\ref{OSLC}) ensures existence for \eqref{(3)}, but
not uniqueness, which is of great importance here to obtain stability
results and make a convenient use of \eqref{(4)}. 

Therefore, the corner stone in the construction of duality solutions is the 
introduction 
of the notion of {\it reversible} solutions to (\ref{(3)}). A complete 
statement of the definitions and properties of reversible solutions would 
be too long in the present context, so that merely a few hints are given.
Let $\mathcal L$ 
denote the set of Lipschitz continuous solutions to (\ref{(3)}), and define
the set of {\it exceptional solutions}:
	$$
{\mathcal E} = \Big\{p\in{\mathcal L} \mbox{ such that } p^T \equiv 0 \Big\}.
	$$ 
The possible loss of 
uniqueness corresponds to the case where ${\mathcal E}$ is not reduced to zero.
	\begin{definition}
We say that $p\in{\mathcal L}$ is a {\bf reversible solution} to (\ref{(3)}) 
if $p$ is locally constant on the set 
	$$
{\mathcal V}_e=\Big\{(t,x)\in [0,T] \times \RR;\ \exists\ p_e\in{\mathcal E},\ p_e(t,x)\not=0\Big\}.
	$$
	\end{definition}

This definition leads quite directly to the uniqueness results of 
\cite{bj1}. It turns out that the class of 
reversible solutions is also stable by perturbations of the coefficient $b$.

We now restrict ourselves to those $p$'s in (\ref{(4)}). More precisely,
we state the following definition.
	\begin{definition}
We say that 
$\rho\in \smes := C([0,T];{\cal M}_b(\RR)-\sigma({\cal M}_b,C_0))$
is a {\bf duality solution} to (\ref{eq.conserve}) 
if for any $0<\tau\le T$, and any {\bf reversible} solution $p$ to (\ref{(3)})
with compact support in $x$,
the function $\displaystyle t\mapsto\int_{\RR}p(t,x)\rho(t,dx)$ is constant on
$[0,\tau]$.
	\label{defdual}\end{definition}
\begin{remark}\label{dual.trans}
A similar notion of duality solution for the transport equation is available
 $\pa_t u + b\pa_x u = 0$,
and $\rho$ is a duality solution of (\ref{eq.conserve}) iff $u=\int^x\rho$
is a duality solution to transport equation (see \cite{bj1}).
\end{remark}

We shall need the following facts concerning duality solutions.

	\begin{theorem}(Bouchut, James \cite{bj1})\label{ExistDuality}
\begin{enumerate}
\item Given $\rho^\circ \in {\cal M}_b(\RR)$, under the assumptions
(\ref{OSLC}), there exists a unique $\rho \in \smes$,
duality solution to (\ref{eq.conserve}), such that $\rho(0,.)=\rho^\circ$. \\
Moreover, if $\rho^\circ$ is nonnegative, then $\rho(t,\cdot)$ is nonnegative
for a.e. $t\geq 0$. And we have the mass conservation 
	$$
|\rho(t,\cdot)|(\RR) = |\rho^\circ|(\RR), \quad \mbox{ for a.e. } t\in ]0,T[.
	$$
\item Backward flow and push-forward: the duality solution satisfies
	\beq\label{flow}
\forall\, t\in [0,T], \forall\, \phi\in C_0(\RR),\quad
\int_\RR \phi(x)\rho(t,dx) = \int_\RR \phi(X(t,0,x)) \rho^0(dx),
	\eeq
where the {\bf backward flow} $X$ is defined as the unique reversible
solution to
	$$
\pa_tX + b(t,x) \pa_xX = 0 \quad \mbox{ in } ]0,s[\times\RR, \qquad
X(s,s,x)=x.
	$$
\item For any duality solution $\rho$, we define the {\bf generalized flux}
  corresponding to $\rho$ by 
$b\Dpetit \rho = -\pa_t u$, where $u=\int^x \rho\,dx$.

There exists a bounded Borel function $\widehat b$, called {\bf
universal representative} of $b$, such that $\widehat b = a$
almost everywhere, and for any duality solution $\rho$,
	$$
\partial_t \rho + \partial_x(\widehat b\rho) = 0 \qquad \hbox{in the distributional sense.}
	$$
\item Let $(b_n)$ be a bounded sequence in
$L^\infty(]0,T[\times\RR)$, such that
$b_n\rightharpoonup b$ in $L^\infty(]0,T[\times\RR)-w\star$. Assume
$\partial_x b_n\le \alpha_n(t)$, where $(\alpha_n)$ is bounded in $L^1(]0,T[)$,
$\partial_x b\le\alpha\in L^1(]0,T[)$.
Consider a sequence $(\rho_n)\in\smes$ of duality solutions to
	$$
\partial_t\rho_n+\partial_x(b_n\rho_n)=0\quad\hbox{in}\quad]0,T[\times\RR,
	$$
such that $\rho_n(0,.)$ is bounded in ${\cal M}_{b}(\RR)$, and
$\rho_n(0,.)\rightharpoonup\rho^\circ\in{\cal M}_{b}(\RR)$.

\noindent Then $\rho_n\rightharpoonup \rho$ in $\smes$, where $\rho\in\smes$ is the
duality solution to
	$$
\partial_t\rho+\partial_x(b\rho)=0\quad\hbox{in}\quad]0,T[\times\RR,\qquad
\rho(0,.)=\rho^\circ.
	$$
Moreover, $\widehat b_n\rho_n\rightharpoonup \widehat b\rho$ weakly in ${\cal M}_{b}(]0,T[\times\RR)$.
\end{enumerate}
	\end{theorem}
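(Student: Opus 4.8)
The plan is to erect the whole theory on the backward transport equation \eqref{(3)} and the duality identity \eqref{(4)}, the unifying object being the class of reversible solutions of \cite{bj1}, whose construction and stability under perturbation of $b$ carry all four items. For the existence in part~1, I would first regularize the velocity: pick $b_n\in C^\infty$ with $b_n\to b$ and $\partial_x b_n\le\beta_n(t)$, $(\beta_n)$ bounded in $L^1$. For smooth $b_n$ the equation has a classical solution $\rho_n$ obtained by transporting $\rho^\circ$ along the characteristics $\dot x=b_n(t,x)$; since this flow is a bijection, nonnegativity is preserved and $|\rho_n(t)|(\RR)=|\rho^\circ|(\RR)$. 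The uniform mass bound and the weak-$*$ compactness recalled just before the theorem give a subsequence converging in $\smes$ to some $\rho$; passing to the limit in $\frac{d}{dt}\int p_n\,\rho_n=0$, where $p_n$ solves the backward equation for $b_n$, produces a duality solution. Since this is exactly the mechanism of part~4, I would in fact prove the stability statement first and read off existence as the special case $b_n\to b$.

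For uniqueness, the point is that reversible solutions form a test class rich enough to separate measures. Given two duality solutions with the same datum, their difference $\mu=\rho_1-\rho_2$ satisfies $\int p(t,\cdot)\,d\mu(t)=0$ for every compactly supported reversible $p$. Taking $p$ to be the reversible solution with terminal data $\psi\in C_c^\infty$ at time $\tau$, and using that under the OSL condition the backward flow is well defined (characteristics may merge forward but spread backward, by Filippov theory), one recovers $\int\psi\,d\mu(\tau)=0$ for all $\psi$, hence $\mu(\tau)=0$. The same computation yields the push-forward of part~2: the reversible solution carrying terminal data $\phi$ at time $t$ is $\phi$ composed with the backward flow, so evaluating \eqref{(4)} between $0$ and $t$ gives $\int\phi\,d\rho(t)=\int\phi(X(t,0,\cdot))\,d\rho^0$.

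For part~3 I would set $u=\int^x\rho$, so that $\rho=\partial_x u$ and, by the remark following Definition~\ref{defdual}, $u$ is a duality solution of the transport equation; the generalized flux $b\Dpetit\rho:=-\partial_t u$ is then a well-defined measure. The delicate step is to exhibit a \emph{single} bounded Borel function $\widehat b$, equal to $b$ almost everywhere, for which $b\Dpetit\rho=\widehat b\rho$ simultaneously for every duality solution. I would do this by analyzing the flow: at a point where $\rho$ concentrates and $b$ is discontinuous, $\widehat b$ must equal the speed of the corresponding characteristic, a quantity intrinsic to the reversible flow and therefore independent of the particular $\rho$. This universality is the hard part, and it is precisely where reversibility is essential, since only reversible solutions see the flow in a single-valued way on the exceptional set $\mathcal{V}_e$.

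Finally, for the flux convergence in part~4, once $\rho_n\rightharpoonup\rho$ is established I would pass to the limit at the level of primitives: $u_n=\int^x\rho_n\to u$ locally uniformly by the stability of reversible solutions of the transport equation, whence $\partial_t u_n\rightharpoonup\partial_t u$ as measures, i.e. $\widehat b_n\rho_n\rightharpoonup\widehat b\rho$. The recurring obstacle in all four items is this stability of reversible solutions under weak-$*$ perturbation of the coefficient: it is what guarantees that a limit of duality solutions is again \emph{the} duality solution for the limiting field, and that the exceptional/reversible structure underlying both the universal representative and the uniqueness passes to the limit.
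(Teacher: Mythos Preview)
The paper does not prove this theorem. Theorem~\ref{ExistDuality} is quoted from Bouchut and James~\cite{bj1} as a known result, with attribution in the statement itself, and no proof is given in the present paper; the text resumes immediately with commentary on duality solutions and then Theorem~\ref{dual2distrib}. So there is no ``paper's own proof'' to compare your proposal against.

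That said, your sketch is a reasonable outline of the architecture of~\cite{bj1}: building everything on reversible solutions of the backward equation, obtaining existence by regularization and reading it off from the stability result, and deducing uniqueness and the push-forward formula from the richness of reversible test functions. Two caveats on the sketch itself. First, in part~3 you describe $\widehat b$ as determined pointwise by ``the speed of the corresponding characteristic'' at concentration points; in~\cite{bj1} the universal representative is constructed more abstractly from the flow and the exceptional set, and showing it is a single Borel function working for \emph{all} duality solutions is genuinely technical---your one-sentence account underplays this. Second, your argument for the flux convergence in part~4 via $\partial_t u_n\rightharpoonup\partial_t u$ needs more than local uniform convergence of $u_n$: one must control the time regularity uniformly (e.g.\ an equi-Lipschitz bound in $t$ with values in $L^1_{\mathrm{loc}}$), which in~\cite{bj1} comes from the uniform mass bound and the structure of the equation. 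None of this is wrong in spirit, but as written these two steps are assertions rather than arguments.
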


The set of duality solutions is clearly a vector space, but it has
to be noted that a duality solution is not {\it a priori} defined as 
a solution in the sense of distributions. 
However, assuming that the coefficient $b$ is piecewise continuous,
we have the following equivalence result:
	\begin{theorem}\label{dual2distrib}
Let us assume that in addition to the OSL condition \eqref{OSLC}, 
$b$ is piecewise continuous on $]0,T[\times\RR$ where the 
set of discontinuity is locally finite.
Then there exists a function $\bchapo$ which coincides with $b$
on the set of continuity of $b$.

With this $\bchapo$, $\rho\in \smes$ is a duality solution to 
\eqref{eq.conserve} if and only if $\pa_t\rho+\pa_x(\bchapo\rho)=0$
in ${\mathcal D}'(\RR)$. Then the generalized flux 
$b\Dpetit \rho = \bchapo \rho$. 
In particular, $\bchapo$ is a universal representative of $b$.
	\end{theorem}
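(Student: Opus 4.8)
The plan is to establish the two directions of the equivalence by exploiting the structure afforded by piecewise continuity of $b$, together with the characterization of duality solutions through the generalized flux $b\Dpetit\rho = -\pa_t u$ where $u = \int^x\rho$, as recalled in Theorem \ref{ExistDuality}. The essential idea is that, away from the locally finite discontinuity set, $b$ is genuinely continuous, so the only place where the definition of the product $b\rho$ can be ambiguous is on the discontinuity curves; the construction of $\bchapo$ is precisely a consistent choice of value there.

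First I would construct $\bchapo$. Since the set of discontinuities is locally finite in $]0,T[\times\RR$, it is (locally) a finite union of points and curves; on the open complement $b$ is continuous, and I set $\bchapo = b$ there. On each discontinuity curve I must assign a value so that the resulting $\bchapo$ is Borel and bounded, and so that $\pa_t\rho + \pa_x(\bchapo\rho)=0$ holds distributionally for \emph{every} duality solution $\rho$. The natural candidate is dictated by the backward flow: by the push-forward formula \eqref{flow} in Theorem \ref{ExistDuality}, duality solutions are transported by the reversible flow $X$, and along a discontinuity line the flux $b\Dpetit\rho$ is fixed by $-\pa_t u$. I would therefore \emph{read off} the correct boundary value of $\bchapo$ on each curve from the generalized flux, i.e.\ define $\bchapo$ on the discontinuity set by the requirement that $\bchapo\rho$ equal the generalized flux $b\Dpetit\rho$ there. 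The OSL condition \eqref{OSLC} guarantees that $b$ can only jump \emph{down} as $x$ increases across a curve, which is exactly the compressive configuration; this prevents the pathological (expansive) case and makes the flux value on the curve well-defined and independent of the particular $\rho$.

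For the equivalence itself: if $\rho$ satisfies $\pa_t\rho + \pa_x(\bchapo\rho)=0$ in ${\cal D}'$, then $-\pa_t u = \bchapo\rho$ is a well-defined measure, and testing against reversible solutions $p$ to \eqref{(3)} and integrating by parts formally reproduces \eqref{(4)}, so $t\mapsto\int p\,\rho$ is constant and $\rho$ is a duality solution. The delicate point is that this integration by parts requires $\bchapo\rho$ to coincide with the generalized flux; this is where one invokes that $\bchapo$ agrees with $b$ on the continuity set and the constructed value on the curves. Conversely, if $\rho$ is a duality solution, then by part 3 of Theorem \ref{ExistDuality} there is a universal representative $\widehat b$ with $\pa_t\rho + \pa_x(\widehat b\rho)=0$ distributionally and $\widehat b = b$ a.e.; I would then argue that $\widehat b\rho = \bchapo\rho$ as measures, so that one may replace $\widehat b$ by $\bchapo$. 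Since $\widehat b = b = \bchapo$ almost everywhere, the two products can differ only through the singular part of $\rho$ supported on the discontinuity set, and the construction of $\bchapo$ was arranged so that both equal the generalized flux there. This simultaneously yields $b\Dpetit\rho = \bchapo\rho$ and shows $\bchapo$ is a universal representative.

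The main obstacle is pinning down the value of $\bchapo$ on the discontinuity curves so that $\bchapo\rho$ agrees with the generalized flux for \emph{all} duality solutions simultaneously, including those carrying Dirac masses that sit on a discontinuity line. One must verify that the flux the duality framework assigns to such a concentrated $\rho$ is consistent with a single pointwise value of $\bchapo$ on the curve, rather than depending on $\rho$; this is precisely the content that makes $\bchapo$ a genuine \emph{universal} representative, and it is where the OSL/compressivity hypothesis does the real work by ruling out ambiguous (expansive) jumps.
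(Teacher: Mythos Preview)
The paper does not give a detailed proof of this theorem; it simply states that the result follows from the uniqueness of solutions to the Cauchy problem for both notions of solution, and cites Theorem~4.3.7 of \cite{bj1}. The argument there is not the direct construction you outline, but rather: for piecewise continuous $b$ satisfying the OSL condition one shows independently that (i) the duality Cauchy problem is well-posed, and (ii) the distributional Cauchy problem with the specific $\bchapo$ is well-posed; since the two notions agree for smooth data and smooth $b$ (and are stable under approximation), uniqueness forces them to coincide in general. This uniqueness-based route sidesteps entirely the need to verify, $\rho$ by $\rho$, that $\bchapo\rho$ equals the generalized flux.

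Your plan is a reasonable alternative strategy, but it has a genuine gap in the construction of $\bchapo$ on the discontinuity set. You propose to ``read off'' the value of $\bchapo$ on a discontinuity curve from the generalized flux $b\Dpetit\rho = -\pa_t u$, and then argue afterwards that this value is independent of $\rho$. This is circular as stated: the flux is defined only once a particular duality solution $\rho$ is fixed, so you cannot use it to define $\bchapo$ before establishing the equivalence. What actually resolves this (and is the content of the Bouchut--James construction) is that on a discontinuity curve $x=\xi(t)$ one sets $\bchapo(t,\xi(t))=\xi'(t)$, the velocity of the curve itself. The OSL condition forces any such curve to be compressive (characteristics enter from both sides), so any Dirac mass sitting on the curve is transported with speed $\xi'(t)$, and this choice makes $\bchapo\rho$ match the generalized flux for every $\rho$ simultaneously. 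You allude to this mechanism in your final paragraph, but never commit to the concrete value $\xi'(t)$; without it the argument does not close.
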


This result comes from the uniqueness of solutions to the Cauchy 
problem for both kinds of solutions (see Theorem 4.3.7 of \cite{bj1}).

\subsection{Main results}
We are now in position to give the definition of duality solutions for the limit system
\eqref{eqrhohydro}--\eqref{eqShydro}.
	\begin{definition}\label{defexist}
We say that $(\rho,S)\in C([0,T];\calM_b(\RR))\times C([0,T];W^{1,\infty})$ 
is a duality solution to \eqref{eqrhohydro}--\eqref{eqShydro} if there exists 
$b\in L^\infty((0,T)\times\RR)$ and $\alpha\in L^1_{loc}(0,T)$ satisfying $\pa_xb\le\alpha$ in $\cal D'$,
such that
\begin{enumerate}
\item for all $0<t_1<t_2<T$
	$$
\pa_t\rho + \pa_x(b\rho) = 0\quad\mbox{in the sense of duality on }]t_1,t_2[,
	$$
\item \eqref{eqahydro} is satisfied in the weak sense:
	$$
\forall\, \psi\in C^1(\RR), \ \forall\, t\in [0,T],
\quad \int_\RR (\pa_xS\pa_x\psi+S\psi)(t,x)\,dx = \int \psi(x)\,\rho(t,dx),
	$$
\item $\qquad b=a(\pa_xS)\quad a.e.$
\end{enumerate}
	\end{definition}

\begin{remark}
For $S$ in $C([0,T];W^{1,\infty})$ and $\phi$ as in \eqref{phi}, 
we have $a(\pa_xS)\in C([0,T];L^\infty(\RR))$. Therefore equation 
\eqref{eqrhohydro} is meaningful in the duality sense. The key 
property  is then the one-sided Lipschitz condition.
\end{remark}

Unfortunately, Definition \ref{defexist} does not ensure uniqueness, as 
we shall evidence in Section \ref{SecExistDual}. This is due
to the fact that the product $a(\pa_xS)\rho$ is not properly defined yet.
Indeed the relevant definition of this product relies on a proper definition of the flux
of the system, which we introduce now. Let $A$ be an antiderivative of $a$ such that $A(0)=0$, we set
\beq\label{DefFluxJ}
J=-\pa_x(A(\pa_xS)) +a(\pa_xS)S.
\eeq
This choice is justified first since this definition holds true when $S$ is regular. Indeed we have
$\pa_x(A(\pa_xS))=a(\pa_xS)\pa_{xx}S$, so that we can write $J=a(\pa_xS)(-\pa_{xx}S+S)=a(\pa_xS)\rho$.
On the other hand, a more physical reason relies on the fact that the above $J$ is the correct flux
for the kinetic model, and passes to the limit when $\eps$ goes to zero, see Section \ref{SecConvKinet}.

We can now establish the following uniqueness theorem:
	\begin{theorem}\label{ExistFlux}
Let us assume that $\rho^{ini}\geq 0$ is given in $\calM_b(\RR)$.
Then, for all $T> 0$ there exists
a unique duality solution $(\rho,S)$ with $\rho\geq 0$ of 
\eqref{eqrhohydro}--\eqref{eqShydro} which satisfies in the distributional sense:
\beq\label{eqrhodis}
\pa_t \rho + \pa_x J = 0,
\eeq
where $J$ is defined in \eqref{DefFluxJ}. 
It means that the universal representative in Theorem 
\ref{ExistDuality} satisfies
	$$
\widehat b \rho = J,\quad \mbox{ in the sense of measures.}
	$$
Moreover, we have $\rho = X_\# \rho^{ini}$ where $X$ is the 
backward flow corresponding to $a(\pa_xS)$.
	\end{theorem}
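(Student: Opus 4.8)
The plan is to read \eqref{eqrhohydro}--\eqref{eqShydro} as the linear duality problem \eqref{eq.conserve} with a velocity $b=a(\pa_xS)$ slaved to the unknown through $S=K*\rho$, and to close the loop by a compactness argument. First I would record the three a priori bounds that drive everything. Positivity and mass conservation (Theorem \ref{ExistDuality}) give $\rho(t,\cdot)\ge0$ and $|\rho(t,\cdot)|(\RR)=|\rho^{ini}|(\RR)=:m$, hence $S=K*\rho\ge0$ with $\|S\|_\infty\le\tfrac12 m$, and $b=-c\,\phi(c\pa_xS)$ bounded. The decisive point is the one-sided Lipschitz estimate: formally $\pa_xb=-c^2\phi'(c\pa_xS)\,\pa_{xx}S=-c^2\phi'(c\pa_xS)\,(S-\rho)$, so that, since $\phi'\le0$ by \eqref{phi} and $\rho\ge0$, the only measure-valued term $c^2\phi'(c\pa_xS)\rho$ is nonpositive; at an atom of $\rho$ the field $b$ even jumps downwards. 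Hence $\pa_xb\le c^2\|\phi'\|_\infty\|S\|_\infty\le\tfrac{c^2}{2}\|\phi'\|_\infty\,m=:\beta$ in $\mathcal D'$, i.e. \eqref{OSLC} holds with a constant depending only on the conserved mass -- exactly the hypothesis of Theorem \ref{ExistDuality}.

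For existence I would construct approximations $(\rho_n,S_n)$, the simplest device being a time-discretization in which the velocity $b_n=a(\pa_xS_n)$ is frozen on each small interval and the linear theory of Theorem \ref{ExistDuality} is used on each step. All the bounds above are uniform in $n$: the mass is (essentially) $m$, $\|b_n\|_\infty\le c$, and $\pa_xb_n\le\beta$ with the same $\beta$. Uniform mass together with the uniform $L^\infty$ bound on $b_n$ gives equicontinuity in time, hence compactness of $(\rho_n)$ in $\smes$; the regularizing effect of $K$ yields $S_n\to S$ with $\pa_xS_n\to\pa_xS$ strongly enough (a.e. and in $L^p_{loc}$) to pass to the limit in $b_n\to a(\pa_xS)$. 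The stability statement, point 4 of Theorem \ref{ExistDuality}, then identifies the limit $\rho$ as the duality solution of \eqref{eq.conserve} with velocity $b=a(\pa_xS)$ and $S=K*\rho$; this is a duality solution in the sense of Definition \ref{defexist}.

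Next I would identify the flux. Since $\pa_{xx}S=S-\rho$ is a measure, $w:=\pa_xS$ is $BV$ and $b=-c\phi(cw)$ is piecewise continuous, so Theorem \ref{dual2distrib} applies. Writing $A$ for the antiderivative of $a$ with $A(0)=0$ ($A$ Lipschitz, $w$ $BV$), the $BV$ chain rule gives $\pa_xA(w)=a(w)\,\pa_{xx}S$ on the diffuse part, plus an atom $\big(A(w(x_j^+))-A(w(x_j^-))\big)\delta_{x_j}$ at each jump $x_j$ of $w$, that is at each atom $\mu_j=w(x_j^-)-w(x_j^+)$ of $\rho$. Substituting $\pa_{xx}S=S-\rho$ into \eqref{DefFluxJ} yields $J=a(\pa_xS)\,\rho^{\mathrm{diff}}+\sum_j\bar a_j\,\mu_j\,\delta_{x_j}$, where $\bar a_j=\tfrac1{\mu_j}\int_{w(x_j^+)}^{w(x_j^-)}a(s)\,ds$ is the mean of $a$ across the jump. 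This is exactly of the form $\widehat b\rho$ with $\widehat b=a(\pa_xS)$ off the atoms and $\widehat b=\bar a_j$ at $x_j$, so the universal representative of Theorem \ref{ExistDuality} coincides with this averaged velocity; along the approximation $\widehat{b_n}\rho_n\rightharpoonup\widehat b\rho$ while $J_n\to J$, which gives \eqref{eqrhodis} and $\widehat b\rho=J$.

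For uniqueness I would take two solutions $(\rho_1,S_1)$, $(\rho_2,S_2)$ and compare them through their pseudo-inverses (monotone rearrangements) $Y_1,Y_2$ on $[0,m]$, which satisfy $\dot Y_i=\widehat b_i(t,Y_i)$ and for which $\|Y_1-Y_2\|_{L^1(0,m)}$ is the Wasserstein distance between $\rho_1$ and $\rho_2$, and run a Gronwall estimate on this quantity. The compression term is controlled by the one-sided Lipschitz bound and produces $\beta\|Y_1-Y_2\|_{L^1}$; the velocity-difference term is estimated with $|a(\cdot)-a(\cdot)|\le c^2\|\phi'\|_\infty|\pa_x S_1-\pa_x S_2|$ and $\pa_x(S_1-S_2)=K*(u_1-u_2)-(u_1-u_2)$, where $u_i=\int^x\rho_i$. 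The genuine difficulty -- and the main obstacle of the whole theorem -- is that the jump of $K'$ at the origin turns this last term into the self-interaction of the atoms, which is not controlled by the Wasserstein distance alone: this is precisely the ambiguity in the product $a(\pa_xS)\rho$ at Dirac masses. It is removed by the averaged-flux definition \eqref{DefFluxJ}: assigning each atom the mean velocity $\bar a_j$, and using that $a$ is nondecreasing in $\pa_xS$ (as $a'=-c^2\phi'\ge0$), makes the atom dynamics order-preserving and lets the estimate close, so that $\|Y_1-Y_2\|_{L^1}(t)\le e^{\int_0^t(\beta+L)}\|Y_1-Y_2\|_{L^1}(0)=0$ for a constant $L$ depending only on $m$. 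Uniqueness of $\rho$ gives that of $S=K*\rho$, and the representation $\rho=X_\#\rho^{ini}$ is then the push-forward formula \eqref{flow} applied to the backward flow of $b=a(\pa_xS)$.
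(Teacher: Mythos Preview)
Your existence scheme (freezing the velocity on small time steps) is a legitimate alternative to the paper's particle approximation, and the a priori bounds you list are the right ones. One caveat: with a frozen velocity $b_n=a(\pa_xS_n^{\rm old})$, the identity $-\pa_{xx}S_n^{\rm old}+S_n^{\rm old}=\rho_n(t)$ fails inside the step, so you do \emph{not} have $\widehat b_n\rho_n=J_n$ along the approximation. The paper avoids this by approximating $\rho^{ini}$ by finite sums of Dirac masses and solving the resulting ODE system \eqref{dynagg} exactly; for these particle solutions $S_n=K*\rho_n$ holds at every time, so the algebraic identity $a(\pa_xS_n)\rho_n=-\pa_xA(\pa_xS_n)+a(\pa_xS_n)S_n$ is exact, and piecewise continuity of $a(\pa_xS_n)$ (finitely many jump lines) lets Theorem \ref{dual2distrib} identify the duality flux with $J_n$. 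Stability (Theorem \ref{ExistDuality}, point 4) and the a.e.\ convergence of $\pa_xS_n$ then transfer both the duality property and the identity $\widehat b\rho=J$ to the limit. Your BV chain-rule computation of $J$ on atoms is correct and is essentially the Vol'pert argument the paper invokes, but for a general limit measure you cannot appeal to Theorem \ref{dual2distrib} directly (the set of atoms need not be locally finite); this is why the paper passes through the particle level.

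The real gap is in uniqueness. You correctly isolate the obstruction: the jump of $\pa_xK$ at the origin produces, in the velocity-difference term $\int|\widehat b_1-\widehat b_2|\,\rho_2(dx)$, a contribution of size $\|u_1-u_2\|_\infty$ coming from the self-interaction of atoms, and this is \emph{not} dominated by the Wasserstein distance $\|Y_1-Y_2\|_{L^1}=\|u_1-u_2\|_{L^1}$. Saying that the averaged flux makes the atom dynamics order-preserving is true but does not by itself bound that term; you have not shown how the Gronwall inequality actually closes, and with a genuinely nonlinear $a$ the standard $\lambda$-convexity argument for the aggregation equation does not apply as stated. The paper takes a completely different route: it does not compare $\rho_1$ and $\rho_2$ at all, but passes to the potential. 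Convolving \eqref{eqrhodis}--\eqref{DefFluxJ} by $K$ yields the closed nonlocal equation \eqref{eqSdistrib} on $S$; differentiating gives a scalar balance law \eqref{eqdxSdistrib} for $\pa_xS$. The one-sided estimate $\pa_{xx}S\le S$ (equivalent to $\rho\ge0$) is then used, through a commutator argument, to prove a genuine entropy inequality (Lemma \ref{entropS}) for $\pa_xS$. Kru\v{z}kov's doubling of variables on this entropy formulation gives a $W^{1,1}$ contraction for $S$ (Theorem \ref{uniqS}), hence $S_1=S_2$ and $\rho_1=\rho_2$. This is the key mechanism you are missing: uniqueness is obtained at the level of $S$, where the flux $J$ becomes an honest (nonlocal) divergence-form nonlinearity and entropy methods apply, rather than by a transport estimate on $\rho$.
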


The second result concerns the rigorous proof of hydrodynamical limit for the kinetic model.
Let $(f_\eps, S_\eps)$ be a solution of the system \eqref{cin1D}--\eqref{ellip1D},
complemented with null boundary condition at infinity and with the following
 initial data:
	\begin{equation}\label{initcin}
f_\eps(0,\cdot,\cdot) = f_\eps^{ini},
	\end{equation}
such that $\rho_\eps^{ini}=\eta_\eps*\rho^{ini}$ where $\eta_\eps$
is a mollifier and $\rho^{ini}$ is given in $\calM_b(\RR)$.
We recall that for fixed $\eps> 0$, there exists ($f_\eps$, $S_\eps$) such that 
$f_\eps$ belongs to $C([0,T]\times \RR\times V)$
and therefore $S_\eps \in C([0,T];C^2(\RR))$, see \cite{bourncalv}, or \cite{nv} in the present context.
	\begin{theorem}\label{ConvKinet}
Let us assume that $\rho^{ini}\geq 0$ is given in $\calM_b(\RR)$.
Let $(f_\eps,S_\eps)$ be a solution to the kinetic--elliptic equation 
\eqref{cin1D}--\eqref{ellip1D} with initial data \eqref{initcin}. 
Then, as $\eps \to 0$, $(f_\eps,S_\eps)$ converges in the following
sense:
	$$
\begin{array}{c}
\ds \rho_\eps:=f_\eps(c)+f_\eps(-c) \rightharpoonup \rho
\qquad \mbox{ in } \quad 
\smes := C([0,T];{\cal M}_{b}(\RR)-\sigma({\cal M}_{b},C_0)), \\[4mm]
\ds S_\eps \rightharpoonup S \qquad \mbox{ in } \quad
C([0,T];W^{1,\infty}(\RR))-weak,
\end{array}
	$$
where $(\rho,S)$ is the unique duality solution of the system 
\eqref{eqrhohydro}--\eqref{eqShydro} satisfying
$$
\widehat b \rho = J ,\quad \mbox{ in the sense of measures.}
$$
	\end{theorem}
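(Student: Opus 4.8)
The plan is to derive uniform-in-$\eps$ bounds, extract converging subsequences, and pass to the limit in the conservation law satisfied by $\rho_\eps$, the whole difficulty being concentrated in the identification of the limiting flux. First I would collect the a priori estimates. Nonnegativity of $f_\eps$ is preserved by the kinetic equation, and summing the two equations in \eqref{cin1D} gives the local conservation law $\pa_t\rho_\eps+\pa_x j_\eps=0$ with microscopic current $j_\eps:=c(f_\eps(c)-f_\eps(-c))$. Since $|j_\eps|\le c\,\rho_\eps$, mass conservation yields uniform bounds for $\rho_\eps$ and $j_\eps$ in $L^\infty(0,T;\calM_b(\RR))$; using $S_\eps=K*\rho_\eps$ in \eqref{ellip1D} gives uniform bounds for $S_\eps$ in $C([0,T];W^{1,\infty})$ and, since $\pa_{xx}S_\eps=S_\eps-\rho_\eps$ is bounded in measures, for $\pa_xS_\eps$ in $BV$ in the $x$ variable. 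The conservation law provides equicontinuity in time, so an Arzel\`a--Ascoli type argument gives, up to a subsequence, $\rho_\eps\rightharpoonup\rho$ in $\smes$ and $S_\eps\rightharpoonup S$ in $C([0,T];W^{1,\infty})$-weak. The $BV_x$ bound is the crucial gain of compactness: it upgrades the convergence of $\pa_xS_\eps$ to a \emph{strong} convergence in $L^p_{loc}$ (and a.e.), which is indispensable below because $a$ is nonlinear.

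The main obstacle is the passage to the limit in $j_\eps$, since $\rho_\eps$ converges only weakly-$*$ as a measure (it may concentrate into Dirac masses) while $a$ is nonlinear. The key is to eliminate $\rho_\eps$ in favour of $S_\eps$. Writing $f_\eps(\pm c)=\tfrac12(\rho_\eps\pm j_\eps/c)$ and inserting this into the collision term, the difference of the two equations in \eqref{cin1D} shows that the collision balance $\Phi(-c\pa_xS_\eps)f_\eps(-c)-\Phi(c\pa_xS_\eps)f_\eps(c)$ is of order $\eps$, and solving the resulting relation for $j_\eps$ gives
\beq\label{fluxeq}
j_\eps = a(\pa_xS_\eps)\,\rho_\eps \;-\; \eps\,E_\eps,
\eeq
where $E_\eps$ is controlled in a space of distributions because the denominator $\Phi(-c\pa_xS_\eps)+\Phi(c\pa_xS_\eps)\ge 2\phi_0(1-\lambda)>0$ stays bounded away from zero. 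The point is now that the equilibrium flux can be rewritten purely in terms of $S_\eps$: using $\rho_\eps=-\pa_{xx}S_\eps+S_\eps$ together with $\pa_x(A(\pa_xS_\eps))=a(\pa_xS_\eps)\pa_{xx}S_\eps$,
\beq\label{Jepseq}
a(\pa_xS_\eps)\,\rho_\eps = -\pa_x\big(A(\pa_xS_\eps)\big) + a(\pa_xS_\eps)\,S_\eps,
\eeq
which is exactly the flux $J$ of \eqref{DefFluxJ} evaluated at $S_\eps$. This reformulation is precisely why $J$ is defined as it is, and it transfers the limit from the weakly converging $\rho_\eps$ to the strongly converging $\pa_xS_\eps$.

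It then remains to let $\eps\to0$ in \eqref{Jepseq} and to identify the limit as a duality solution. By the strong convergence of $\pa_xS_\eps$ and the continuity of $a$ and $A$, both terms on the right-hand side of \eqref{Jepseq} converge, so that $a(\pa_xS_\eps)\rho_\eps\to -\pa_x(A(\pa_xS))+a(\pa_xS)S=J$ in $\mathcal D'$; together with \eqref{fluxeq} this yields $j_\eps\to J$ and hence $\pa_t\rho+\pa_xJ=0$ distributionally, with $J$ as in \eqref{DefFluxJ}. To see that $\rho$ is a genuine duality solution for $b:=a(\pa_xS)$, I would first record the uniform OSL bound: since $a'\ge0$ (because $\phi'\le0$) and $\rho_\eps\ge0$, the identity $\pa_xb_\eps=a'(\pa_xS_\eps)(S_\eps-\rho_\eps)\le\|a'\|_\infty\|S_\eps\|_\infty$ holds uniformly in $\eps$ and is stable under the limit. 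For fixed $\eps$ the field $b_\eps=a(\pa_xS_\eps)$ is smooth, so the duality solution with velocity $b_\eps$ coincides with the distributional one; comparing $\rho_\eps$, which by \eqref{fluxeq} solves $\pa_t\rho_\eps+\pa_x(b_\eps\rho_\eps)=\eps\pa_xE_\eps$, with this exact duality solution and letting the vanishing source disappear, the stability result of Theorem \ref{ExistDuality} (item 4) identifies $\rho$ as the duality solution associated with $b$ and gives $\widehat b\rho=J$ in the sense of measures. The pair $(\rho,S)$ thus satisfies all the hypotheses of Theorem \ref{ExistFlux}, whose uniqueness makes the limit independent of the extracted subsequence, so that the whole family $(f_\eps,S_\eps)$ converges. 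The genuine difficulty is the flux identification of the previous paragraph: the reformulation \eqref{Jepseq} is what makes it possible to pass a nonlinear function of the weakly converging $\rho_\eps$ to the limit, and it relies simultaneously on the sign condition $\phi'\le0$ (which yields both $a'\ge0$ and the OSL bound) and on the $BV_x$-compactness of $\pa_xS_\eps$.
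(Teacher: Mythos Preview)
Your proposal follows essentially the same strategy as the paper: uniform moment bounds, compactness of $(\rho_\eps)$ in $\smes$, the crucial flux identity (your \eqref{Jepseq} is precisely the paper's \eqref{fluxeps}), and passage to the limit in the distributional conservation law. The central insight---rewriting $a(\pa_xS_\eps)\rho_\eps$ purely in terms of $S_\eps$ so that only the a.e./strong convergence of $\pa_xS_\eps$ is needed, not the merely weak convergence of $\rho_\eps$---is exactly the paper's. Your derivation of the error term via the first moment equation is also the paper's \eqref{moment1}, just written in the equivalent form $J_\eps=a(\pa_xS_\eps)\rho_\eps-\eps E_\eps$ rather than $\pa_t\rho_\eps+\pa_x(a(\pa_xS_\eps)\rho_\eps)=R_\eps$.

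There is one soft spot in the final identification step. You invoke Theorem~\ref{ExistDuality}(4) to conclude that the limit $\rho$ is a duality solution, but that stability result is stated for \emph{exact} duality solutions of $\pa_t\rho_n+\pa_x(b_n\rho_n)=0$, while your $\rho_\eps$ solve this equation with the source $\eps\,\pa_xE_\eps$. ``Letting the vanishing source disappear'' is not covered by the theorem as stated. The paper sidesteps this: once $\pa_t\rho+\pa_xJ=0$ with $J=-\pa_xA(\pa_xS)+a(\pa_xS)S$ is established in $\mathcal D'$, it uses Vol'pert's chain rule to write $J=a_S\rho$ with $a_S=a(\pa_xS)$ a.e., and then appeals to the \emph{uniqueness} of distributional solutions of \eqref{eqrhodis}--\eqref{DefFluxJ} with $\rho\ge0$ (Lemma~\ref{lem:eqS} and Theorem~\ref{uniqS} on $S$). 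Since the duality solution built in Theorem~\ref{ExistFlux} satisfies the same distributional problem, the two coincide---so the hydrodynamic limit \emph{is} that duality solution, and $\widehat b\rho=J$ follows. You already have all the ingredients for this route (you obtain \eqref{eqrhodis} and then cite Theorem~\ref{ExistFlux}); simply drop the appeal to item~4 and the argument is complete.
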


\section{Properties of $S$}\label{SecPropS}
We gather in this section a set of properties for the solution $S$ to \eqref{eqShydro} that will be used throughout the paper.

\subsection{One-sided estimates}
The estimates presented in this part rely only on equation \eqref{eqShydro}.
\begin{lemma}\label{propS}
Let $\rho \in C([0,T],{\cal M}_b(\RR))$. Then
the solution $S$ of equation \eqref{eqShydro} satisfies
\begin{enumerate}
\item $\rho \ge 0 \Longrightarrow S \geq 0$  
\item one-sided estimate: $\pa_{xx} S \leq S$ if and only if $\rho \ge 0$
\item for all $p\in[1,+\infty]$, $S\in C([0,T],L^p(\RR))$ and $\pa_x S\in C([0,T],L^p(\RR))$
\end{enumerate}
\end{lemma}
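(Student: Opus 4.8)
The plan is to reduce everything to the explicit representation $S(t,\cdot)=K*\rho(t,\cdot)$ from \eqref{Sexplicit}, where $K(x)=\frac12 e^{-|x|}$ is the unique solution of \eqref{eqShydro} vanishing at infinity. Item~1 is then immediate from the positivity of the kernel: since $S(t,x)=\int_\RR K(x-y)\,\rho(t,dy)$ and $K>0$, a nonnegative measure $\rho(t,\cdot)$ forces $S(t,x)\ge 0$ for every $x$.

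For item~2 I would argue directly from the equation rather than from the representation. Rewriting \eqref{eqShydro} as $\pa_{xx}S=S-\rho$ in $\mathcal D'(\RR)$ gives the identity $S-\pa_{xx}S=\rho$; testing against nonnegative $\psi$, the distributional inequality $\pa_{xx}S\le S$ is therefore equivalent to $\langle\rho,\psi\rangle\ge0$ for all such $\psi$, that is, to $\rho\ge0$. Both implications are read off this single identity.

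The quantitative part of item~3 rests on Young's convolution inequality. Because $K$ decays exponentially, $K\in L^p(\RR)$ for every $p\in[1,\infty]$, and since $K$ is continuous its derivative $K'(x)=-\tfrac12\,\mathrm{sgn}(x)e^{-|x|}$ carries no atom at the origin and satisfies $\|K'\|_{L^p}=\|K\|_{L^p}<\infty$. Writing $\pa_x(K*\rho)=K'*\rho$, Young's inequality for the convolution of an $L^p$ function with a finite measure yields $\|S(t)\|_{L^p}\le\|K\|_{L^p}\,|\rho(t)|(\RR)$ and $\|\pa_x S(t)\|_{L^p}\le\|K'\|_{L^p}\,|\rho(t)|(\RR)$. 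The bound is uniform on $[0,T]$ since $M:=\sup_{t\in[0,T]}|\rho(t)|(\RR)<\infty$, which follows from the uniform boundedness principle applied to the weak-$*$ continuous curve $t\mapsto\rho(t)$ on the compact interval $[0,T]$.

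The main obstacle is the strong time continuity, namely upgrading the weak-$*$ continuity of $t\mapsto\rho(t)$ to continuity of $t\mapsto K*\rho(t)$ in $L^p$. For fixed $x$ the map $t\mapsto S(t,x)=\langle\rho(t),K(x-\cdot)\rangle$ is continuous because $K(x-\cdot)\in C_0(\RR)$, and likewise for $\pa_x S$ with $K'(x-\cdot)\in C_0(\RR)$; hence along any $t_n\to t$ one has pointwise convergence $S(t_n,\cdot)\to S(t,\cdot)$ and $\pa_x S(t_n,\cdot)\to\pa_x S(t,\cdot)$. To turn this into $L^p$ convergence I would invoke Vitali's theorem, whose remaining ingredient is uniform integrability of $\{|S(t_n)|^p\}$; this reduces to a uniform tail estimate $\int_{|x|>R}(K*|\rho(t_n)|)^p\,dx\to0$ as $R\to\infty$, for which the exponential decay of $K$ is precisely what is needed, provided the total masses $|\rho(t_n)|$ do not escape to infinity. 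This tail control is the delicate point: it uses the uniform tightness of $\{|\rho(t)|:t\in[0,T]\}$, which in the present framework is guaranteed by the finite propagation speed $|a|\le c$ of the backward flow defining $\rho$. Granted uniform integrability, Vitali's theorem gives $L^p$ continuity for $p\in[1,\infty)$, while the case $p=\infty$ follows from the pointwise convergence together with the uniform Lipschitz bound $\|\pa_x S\|_{L^\infty}\le\tfrac12 M$, which provides the equicontinuity needed for uniform convergence on compact sets, the tails being handled by the same decay estimate.
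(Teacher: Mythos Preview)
Your treatment of items~1 and~2 and of the $L^p$ bounds in item~3 coincides with the paper's: positivity of $K$ gives item~1, the identity $S-\pa_{xx}S=\rho$ gives item~2, and Young's inequality for the convolution of $K$ (resp.\ $K'$) with a finite measure gives the uniform $L^p$ bounds. The paper in fact stops there; its proof of item~3 establishes only the estimate $\|S(t)\|_{L^p}\le \tfrac12\|e^{-|\cdot|}\|_{L^p}\,|\rho(t)|(\RR)$ and says nothing further about continuity in time.

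Your attempt to go beyond the paper and actually prove strong $L^p$ continuity is more scrupulous, but it contains a genuine gap. You justify uniform tightness of $\{|\rho(t)|\}$ by invoking the finite propagation speed of the backward flow; however the lemma's only hypothesis is $\rho\in C([0,T],\mathcal M_b(\RR))$ with the weak topology $\sigma(\mathcal M_b,C_0)$, and no equation or flow is assumed. Without an extra tightness hypothesis the conclusion fails: take $\rho(t)=\delta_{1/t}$ for $t\in(0,1]$ and $\rho(0)=0$, which is weak-$*$ continuous, yet $\|K*\rho(t)\|_{L^1}=1$ for $t>0$ while $K*\rho(0)=0$, so $t\mapsto S(t)$ is not continuous in $L^1$. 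A second, smaller issue: $K'(x-\cdot)=-\tfrac12\,\mathrm{sgn}(x-\cdot)e^{-|x-\cdot|}$ has a jump at $x$ and is therefore not in $C_0(\RR)$, so weak-$*$ continuity of $\rho$ does not directly give pointwise convergence of $\pa_xS(t_n,x)$; you would need to argue for a.e.\ $x$ (where $\rho(t)$ has no atom) or approximate $K'$ by $C_0$ functions. In short, your bounds match the paper, but the time-continuity upgrade requires either a tightness assumption added to the lemma or the specific structure of the application, which you should make explicit.
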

\begin{proof}
The first two items are easy consequences of the expression \eqref{Sexplicit} for the first one, of the equation
\eqref{eqShydro} for the second.
For the third item, from convolution properties, we have for any $p\in [1,+\infty]$
	$$
\|S(t,.)\|_{L^p(\RR)} = \frac{1}{2}\|e^{-|\cdot|} * \rho(t,.)\|_{L^p(\RR)} 
	\leq |\rho(t,.)|(\RR)\frac{1}{2}\| e^{-|\cdot|}\|_{L^p(\RR)} 
	= \frac 12\sup_{t\in[0,T]}|\rho(t,\cdot)|(\RR),
	$$
where $|\rho|(\RR)$ stands for the total mass of the 
nonnegative measure $\rho$. We proceed in the same way for $\pa_xS$.
\end{proof}

As mentioned above, the key point 
to use the duality solutions is that the velocity field 
satisfies the OSL condition \eqref{OSLC}.

\begin{lemma}\label{aOSL}
Let $\rho\in \smes$. Then the coefficient $a(\pa_xS)$ defined by 
\eqref{eqahydro}-\eqref{eqShydro} satisfies the OSL condition \eqref{OSLC}
if and only if $\rho \ge 0$
\end{lemma}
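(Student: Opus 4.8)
The plan is to reduce the one-sided Lipschitz bound \eqref{OSLC} for $a(\pa_xS)$ to a sign condition on the measure $\pa_{xx}S$, and then to invoke the equivalence $\pa_{xx}S\le S\iff\rho\ge0$ supplied by item 2 of Lemma \ref{propS} (which is merely a restatement of \eqref{eqShydro}). The entry point is a chain rule. Since $\rho\in\calM_b(\RR)$ and, by Lemma \ref{propS}, $S\in L^\infty$, equation \eqref{eqShydro} shows that $\pa_{xx}S=S-\rho$ is a bounded measure, so $\pa_xS\in BV(\RR)$. As $\phi$ is $C^\infty$ with $\phi'\le0$ by \eqref{phi}, the Volpert chain rule applies to $a(\pa_xS)=-c\,\phi(c\pa_xS)$ and yields $\pa_x\big(a(\pa_xS)\big)=w\,\pa_{xx}S=w\,S\,dx-w\,\rho$, where $w:=-c^2\,\overline{\phi'(c\pa_xS)}\ge0$ is bounded by $c^2\|\phi'\|_{\infty}$ (the overline denoting the averaged representative of $\phi'$ across the jumps of $\pa_xS$, still nonpositive since $\phi'\le0$ everywhere). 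This single identity carries both implications.

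For the direct implication, I assume $\rho\ge0$. Then $-w\rho\le0$ as a measure, so $\pa_x\big(a(\pa_xS)\big)\le w\,S\,dx\le c^2\|\phi'\|_{\infty}\,S\,dx$. By Lemma \ref{propS}, $\rho\ge0$ forces $S\ge0$, and the convolution bound behind \eqref{Sexplicit} gives $\|S(t,\cdot)\|_{L^\infty}\le\tfrac12|\rho(t,\cdot)|(\RR)$. Since $\rho\in\smes$ is weak-$\star$ continuous on the compact $[0,T]$, Banach--Steinhaus makes $\sup_{[0,T]}|\rho(t,\cdot)|(\RR)$ finite, so $\beta(t):=\tfrac{c^2}{2}\|\phi'\|_{\infty}\,|\rho(t,\cdot)|(\RR)$ lies in $L^\infty(0,T)\subset L^1(0,T)$ and dominates $\pa_x a(\pa_xS)$ pointwise; hence \eqref{OSLC} holds.

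For the converse I read \eqref{OSLC} as the requirement that the measure $\pa_x a(\pa_xS)=w\,S\,dx-w\,\rho$ carry no positive singular part. Since $S\,dx$ is absolutely continuous, the singular part of $\pa_x a$ equals $-w\,\rho_{\rm sing}$; forcing it to be $\le0$ gives $w\,\rho_{\rm sing}\ge0$, whence $\rho_{\rm sing}\ge0$ wherever $w>0$. Concretely, a negative atom of $\rho$ would produce a positive atom of $\pa_{xx}S$, i.e.\ an upward jump of $\pa_xS$, hence an upward jump of the nondecreasing map $\pa_xS\mapsto a(\pa_xS)$, that is a forbidden positive Dirac mass in $\pa_x a$. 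Reading this contradiction back through $\pa_{xx}S\le S\iff\rho\ge0$ is what I would use to recover $\rho\ge0$.

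I expect the converse to be the delicate part, and the main obstacle is the set where the multiplier $w$ degenerates, namely where $|c\pa_xS|>\alpha$ and thus $\phi'(c\pa_xS)=0$ by \eqref{phi}: there $\pa_x a$ records nothing about the sign of $S-\rho$, so negative mass of $\rho$ could in principle hide from the OSL test, and the argument must genuinely exploit the strict monotonicity $\phi'<0$ on $(-\alpha,\alpha)$ together with the continuity of $\pa_xS$ to exclude this. A secondary technical point, already invoked above, is the rigorous justification of the chain rule and of the nonpositivity of the averaged coefficient $\overline{\phi'}$ when $\pa_xS$ is only $BV$; this is precisely where the smoothness of $\phi$ required in \eqref{phi} is essential.
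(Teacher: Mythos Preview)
Your forward implication ($\rho\ge0\Rightarrow$ OSL) is correct and follows the paper's approach: both compute $\pa_x\big(a(\pa_xS)\big)=-c^2\phi'(c\pa_xS)\,\pa_{xx}S$, use $\phi'\le0$ together with $\pa_{xx}S\le S$ from Lemma~\ref{propS}, and conclude via the $L^\infty$ bound on $S$. Your version is in fact more careful than the paper's, since you invoke the Vol'pert chain rule to make the product meaningful when $\pa_xS$ is only $BV$.

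The converse, however, has a genuine gap that cannot be closed. The paper's own proof treats only the forward direction; the ``only if'' is not established there either, and in fact it fails under the hypotheses of~\eqref{phi}. Take $\rho=M\delta_0-\eps\,\delta_1$ with $M$ large. At $x=1$ one computes $\pa_xS(1^\pm)=-\tfrac{M}{2e}\pm\tfrac{\eps}{2}$, so for $M>e\eps+2e\alpha/c$ both one-sided values satisfy $c\,\pa_xS(1^\pm)<-\alpha$, where $\phi$ is constant; hence $a(\pa_xS)$ has no jump at $x=1$. At $x=0$ the jump of $\pa_xS$ is downward, producing a nonpositive atom in $\pa_x a$. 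Off the atoms $\pa_{xx}S=S$, so $|\pa_x a|\le c^2\|\phi'\|_\infty\|S\|_\infty$. Thus OSL holds with $\beta$ constant, yet $\rho\not\ge0$. The obstruction you yourself flagged---the degeneracy of $w$ on $\{|c\pa_xS|>\alpha\}$---is exactly the mechanism that makes the converse false. Your proposed rescue via ``$\phi'<0$ on $(-\alpha,\alpha)$'' and ``continuity of $\pa_xS$'' is unavailable: \eqref{phi} only gives $\phi'\le0$, and $\pa_xS$ jumps at every atom of $\rho$. The forward implication is the only one the paper needs downstream.
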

\begin{proof}
Straightforward computations lead to
$$
\pa_x (a(\pa_x S)) = - c^2 \phi'(c\pa_xS)\pa_{xx} S.
$$
With \eqref{eqShydro} and since $\phi$ is a nonincreasing function, 
we deduce from the one-sided estimate of Lemma \ref{propS}
$$
\pa_x (a(\pa_xS)) \leq \max\{c^2 \|\phi'\|_{L^\infty} S,0\}.
$$
We conclude thanks to the bound on $S$ in $L^\infty$.
\end{proof}

Finally, we turn to a convergence result for a sequence of such functions $S$.
\begin{lemma}\label{convpaS}
Let $(\rho_n)_{n\in\NN}$ be a sequence of measures that converges weakly
towards $\rho$ in $\smes$ as $n$ goes to $+\infty$. 
Let $S_n(t,x)=(K*\rho_n(t,\cdot))(x)$ and $S(t,x)=(K*\rho(t,\cdot))(x)$,
where $K$ is defined in \eqref{Sexplicit}. Then when $n\to +\infty$ we have
$$
\begin{array}{rcl}
\pa_xS_n(t,x) &\longrightarrow & \pa_xS(t,x)\quad\mbox{ for a.e. } t\in [0,T],\ x\in \RR, \\
\pa_xS_n(t,x) &\rightharpoonup & \pa_xS(t,x)\quad\mbox{ in }L^\infty_{t,x} \,weak-* .
\end{array}
$$
\end{lemma}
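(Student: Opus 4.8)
The plan is to work directly with the representation $\pa_x S_n = K'*\rho_n(t,\cdot)$, where $K(x)=\frac12 e^{-|x|}$ so that $K'(z)=-\frac12\,\mathrm{sgn}(z)\,e^{-|z|}$. This kernel is bounded by $\frac12$, vanishes as $|z|\to+\infty$, and is continuous except for a single jump at $z=0$. Two features drive the argument. First, the uniform bound $\|\pa_x S_n(t,\cdot)\|_{L^\infty}\le \frac12\,|\rho_n(t,\cdot)|(\RR)$, which is uniformly bounded in $n$ and $t$ because the convergence in $\smes$ forces $\sup_n\sup_{[0,T]}|\rho_n(t,\cdot)|(\RR)<+\infty$ (the uniform boundedness underlying the compactness statement of Section \ref{SecDuality}). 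Second, the fact that $K'$ already decays at infinity, so that \emph{no tightness} of $(\rho_n)$ will be needed.

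First I would establish the pointwise a.e.\ convergence. Fix $t$ and a point $x$ which is not an atom of $\rho(t,\cdot)$, and set $\psi_x(y):=K'(x-y)$, a bounded function vanishing as $|y|\to+\infty$ and continuous except for its jump at $y=x$. For each small $\delta>0$ I would sandwich $\psi_x$ between two functions $g_\delta^-\le\psi_x\le g_\delta^+$ in $C_0(\RR)$, chosen to coincide with $\psi_x$ outside $(x-\delta,x+\delta)$ and to satisfy $0\le g_\delta^+-g_\delta^-\le\mathbf 1_{(x-\delta,x+\delta)}$. Using $\rho_n\ge0$ together with the $\sigma(\calM_b,C_0)$ convergence $\rho_n(t,\cdot)\rightharpoonup\rho(t,\cdot)$ tested against the continuous functions $g_\delta^\pm$, passing to the limit in $n$ gives
\[
\int_\RR g_\delta^-(y)\,\rho(t,dy)\ \le\ \liminf_n \pa_x S_n(t,x)\ \le\ \limsup_n \pa_x S_n(t,x)\ \le\ \int_\RR g_\delta^+(y)\,\rho(t,dy).
\]
Letting $\delta\to0$, the gap between the two sides is bounded by $\rho(t,(x-\delta,x+\delta))\to\rho(t,\{x\})=0$ since $x$ is not an atom, which pins both the liminf and the limsup to $\int_\RR\psi_x(y)\,\rho(t,dy)=\pa_x S(t,x)$. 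Because the atoms of the finite measure $\rho(t,\cdot)$ are at most countable, this holds for a.e.\ $x$ at each fixed $t$; a Fubini argument applied to the null set of pairs $(t,x)$ for which $x$ is an atom of $\rho(t,\cdot)$ then yields convergence for a.e.\ $(t,x)$.

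The weak-$*$ convergence follows for free. The sequence $(\pa_x S_n)$ is bounded in $L^\infty((0,T)\times\RR)$ by the uniform estimate above and converges to $\pa_x S$ a.e.; hence for any $\varphi\in L^1((0,T)\times\RR)$, dominated convergence (with dominating function $\frac12\,(\sup_{n,t}|\rho_n(t,\cdot)|(\RR))\,|\varphi|\in L^1$) gives $\int \pa_x S_n\,\varphi\to\int \pa_x S\,\varphi$, which is exactly $\pa_x S_n\rightharpoonup\pa_x S$ in $L^\infty_{t,x}$ weak-$*$.

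The main obstacle is the jump of $K'$ at the origin: since $y\mapsto K'(x-y)$ is discontinuous at $y=x$, the weak-$*$ convergence of the measures cannot be applied to it directly. The resolution is the sandwich by $C_0$ functions combined with the observation that the non-atomicity of $\rho(t,\cdot)$ at a.e.\ point makes the jump contribution vanish in the limit; crucially, because $K'$ decays at infinity, this is the only delicate point and no tightness of $(\rho_n)$ need be invoked.
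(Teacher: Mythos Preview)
Your argument is correct and is essentially the approach the paper indicates: the paper does not spell out a proof but refers to ``regularization of the convolution kernel'' (Lemma~3.1 of \cite{jamesnv}), and your sandwich of $K'(x-\cdot)$ between $C_0$ functions $g_\delta^\pm$ is precisely such a regularization handling the single jump of $K'$, after which the weak-$*$ $L^\infty$ convergence follows from the uniform bound by dominated convergence exactly as you say. One minor caveat: your sandwich inequalities use $\rho_n\ge 0$, which is not part of the lemma's hypotheses as stated; this is harmless here since every use of the lemma in the paper is for nonnegative densities, but the proof as written does not cover signed $\rho_n$.
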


\begin{proof}
The proof of this result is obtained by regularization of the convolution
kernel (see Lemma 3.1 of \cite{jamesnv}).
\end{proof}

\subsection{Entropy estimates}
In this subsection, we consider now that $(\rho,S)$ satisfy \eqref{eqrhodis}--\eqref{DefFluxJ}
in the sense of distributions. We prove first that $S$ satisfies a nonlinear nonlocal equation. Next, 
following the strategy of \cite{NPSm3as}, we prove that the above
one-sided estimate implies some kind of entropy inequality for $\pa_xS$.
	\begin{lemma}\label{lem:eqS}
Assume $(\rho,S)\in C([0,T];\calM_b(\RR))\times C([0,T];W^{1,\infty})$ 
satisfy \eqref{eqrhodis}--\eqref{DefFluxJ},
then $\pa_xS\in C([0,T],L^1(\RR))\cap L^\infty([0,T],BV(\RR))$ and
$S$ is a weak solution of 
	\beq\label{eqSdistrib}
\pa_t S -\pa_xK*\pa_x(A(\pa_xS)) + \pa_xK*(a(\pa_xS)S)=0.
	\eeq
	\end{lemma}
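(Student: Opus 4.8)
The plan is to separate the statement into the two regularity claims on $\pa_x S$ and the derivation of the evolution equation \eqref{eqSdistrib}, basing everything on the representation $S=K*\rho$ that comes from the elliptic equation \eqref{eqShydro}.

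First I would establish the regularity. The $L^1$ bound is immediate: writing $\pa_x S = \pa_x K * \rho$ with $\pa_x K\in L^1(\RR)$ and arguing exactly as in item 3 of Lemma \ref{propS} gives $\pa_x S\in L^\infty([0,T],L^1(\RR))$ with $\|\pa_x S(t,\cdot)\|_{L^1}\le |\rho(t,\cdot)|(\RR)$. For the $BV$ bound I would not differentiate $\pa_x S$ directly, but instead read $\pa_{xx}S$ off the elliptic equation as $\pa_{xx}S = S-\rho$; since $S\in L^1\cap L^\infty$ and $\rho$ is a bounded measure, this is a bounded measure with $|\pa_{xx}S(t,\cdot)|(\RR)\le \|S(t,\cdot)\|_{L^1}+|\rho(t,\cdot)|(\RR)$, uniformly in $t$. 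Hence $\pa_x S\in L^\infty([0,T],BV(\RR))$.

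The strong $L^1$ continuity in time is the delicate point, and I expect it to be the main obstacle. Weak-$*$ continuity of $t\mapsto\rho(t,\cdot)$ together with Lemma \ref{convpaS} only yields that $\pa_x S(t_n,\cdot)\to \pa_x S(t,\cdot)$ almost everywhere and in $L^\infty$ weak-$*$ as $t_n\to t$, because $\pa_x K$ is discontinuous at the origin; this is strictly weaker than $L^1$ convergence. To upgrade it I would combine the uniform bounds just obtained: the uniform $BV$ estimate yields compactness in $L^1_{loc}(\RR)$, while the exponential decay of $K$, together with nonnegativity and the conservation of mass of $\rho$, forces tightness and hence uniform integrability of the tails, so that the $L^1_{loc}$ limit is an $L^1$ limit and must coincide with the a.e. limit $\pa_x S(t,\cdot)$. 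This gives $\pa_x S\in C([0,T],L^1(\RR))$.

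With the regularity in hand, the equation for $S$ follows by convolving the conservation law with $K$. Since $S=K*\rho$ and $K$ acts only in the space variable, $\pa_t S = K*\pa_t\rho$ in $\mathcal D'$; using $\pa_t\rho=-\pa_x J$ from \eqref{eqrhodis} and commuting the convolution with the spatial derivative gives $\pa_t S = -(\pa_x K)*J$, and substituting the definition \eqref{DefFluxJ} of $J$ yields \eqref{eqSdistrib} directly. Each term is well defined at the level of bounded measures and $L^1$ functions: $A$ is Lipschitz, being an antiderivative of the bounded function $a$, so $A(\pa_x S)\in BV(\RR)$ and $\pa_x(A(\pa_x S))$ is a bounded measure, while $a(\pa_x S)S\in L^1\cap L^\infty$; convolving either with $\pa_x K\in L^1\cap L^\infty$ produces exactly the terms appearing in \eqref{eqSdistrib}, so the remaining manipulations are routine once each quantity is read in the correct space.
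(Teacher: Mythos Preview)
Your proposal is correct and follows essentially the same route as the paper: read the $BV$ bound on $\pa_xS$ from $\pa_{xx}S=S-\rho$, and obtain \eqref{eqSdistrib} by convolving \eqref{eqrhodis}--\eqref{DefFluxJ} with $K$. The paper's proof is in fact considerably terser than yours---it simply asserts the regularity conclusion from $\pa_{xx}S=S-\rho$ and then convolves---so your detailed treatment of the strong $L^1$ time-continuity, via $BV$-compactness in $L^1_{loc}$ together with tail control, goes beyond what the paper spells out. One small caveat: your tightness argument invokes nonnegativity and mass conservation of $\rho$, which are not explicit hypotheses of the lemma as stated; they do hold in every application (see Lemma~\ref{propS} and Theorem~\ref{ExistFlux}), and the paper's proof does not address this point at all, but it is worth flagging that the $C([0,T],L^1)$ claim as written leans on this implicit assumption.
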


\begin{proof}
We have $\rho\in \smes$ and $\pa_{xx}S=S-\rho$. Then 
$\pa_xS\in C([0,T],L^1(\RR))\cap L^\infty([0,T],BV(\RR))$. We recall
that we have $S=K*\rho$ where $K(x)=\frac 12 e^{-|x|}$.
Thus taking the convolution by $K$ of \eqref{eqrhodis}--\eqref{DefFluxJ},
we get that $S$ is a weak solution of \eqref{eqSdistrib}.
\end{proof}

\begin{lemma}\label{entropS}
Let $S$ be a weak solution in $C([0,T];W^{1,1}(\RR))$ of
\eqref{eqSdistrib} with initial data $S^{ini}$.
We assume moreover that $\pa_xS$ belongs to $L^\infty([0,T];BV(\RR))$ and
that the one-sided estimate $\pa_{xx} S \leq S$ holds in the distributional
sense. 
Then for any twice continuously differentiable convex function $\eta$ 
we have 
\begin{equation}
  \label{eq:entropy}
  \pa_t \eta(\pa_xS)+ \pa_x(q(\pa_xS)) -\eta'(\pa_xS)a(\pa_xS)S + \eta'(\pa_xS)[K*(-\pa_xA(\pa_xS) + a(\pa_xS)S)] \leq 0, 
\end{equation}
where the entropy flux $q$ is defined by
$$
q(x) = \int_0^x \eta'(y) a(y)\,dy.
$$
\end{lemma}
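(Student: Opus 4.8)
The plan is to reduce the statement to the classical convex-entropy inequality for a scalar conservation law with a bounded source, the admissibility being supplied by the one-sided estimate. Throughout set $w:=\pa_xS$, which by hypothesis belongs to $L^\infty([0,T];BV(\RR))$.

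First I would derive the equation solved by $w$. Differentiating \eqref{eqSdistrib} once in $x$ and using that the kernel satisfies $\pa_{xx}K=K-\delta_0$ (equivalently $-\pa_{xx}K+K=\delta_0$), so that $\pa_{xx}K*g=K*g-g$ for every $g$, one finds that $w$ solves the scalar conservation law
\[ \pa_t w+\pa_x\big(A(w)\big)=G,\qquad G:=a(w)\,S+K*\big(\pa_x A(w)-a(w)S\big). \]
Since $A(w)\in L^\infty$ and $\pa_xK\in L^1$, the term $K*\pa_xA(w)=\pa_xK*A(w)$ is bounded, hence $G\in L^\infty((0,T)\times\RR)$ and the left-hand side of \eqref{eq:entropy} is meaningful. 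Note also that, by \eqref{eqahydro} and $\phi'\le0$, the flux $A$ is convex, since $a=A'$ is nondecreasing.

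The core of the argument is that $w$ is the entropy solution of this conservation law, which I would extract from the one-sided estimate. By hypothesis $\pa_xw=\pa_{xx}S\le S$ in $\mathcal D'$, and $S\in L^\infty$ by Lemma \ref{propS}; hence the singular part of the measure $\pa_xw$ is nonpositive. In particular, at each jump the left trace dominates the right trace, $w^-\ge w^+$, which for the convex flux $A$ is exactly the Lax admissibility condition. Following the strategy of \cite{NPSm3as}, I would mollify in space, $w_\delta:=w*\omega_\delta$, multiply the regularized equation by $\eta'(w_\delta)$, and pass to the limit: the only obstruction to an entropy equality is the commutator $\eta'(w_\delta)\pa_x\big(A(w)*\omega_\delta\big)-\pa_xq(w_\delta)$, whose limit the one-sided bound forces to be a nonnegative measure supported on the jump set. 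Equivalently, working directly in $BV$ via Vol'pert's chain rule, one writes $\pa_t\eta(w)+\pa_xq(w)=\eta'(w)\big(\pa_tw+\pa_xA(w)\big)-\mu_\eta$, where the entropy dissipation $\mu_\eta\ge0$ is concentrated on the jumps and is nonnegative precisely because $w^-\ge w^+$ and $\eta$ is convex (classical jump inequality $\sigma[\eta(w)]-[q(w)]\ge0$, with $\sigma$ given by Rankine--Hugoniot). This yields
\[ \pa_t\eta(w)+\pa_xq(w)\le\eta'(w)\,G \]
in the sense of distributions.

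Finally I would substitute the explicit form of $G$ and rearrange, using $\eta'(w)G=\eta'(w)a(w)S-\eta'(w)\,K*\big(-\pa_xA(w)+a(w)S\big)$, which is exactly the combination appearing in \eqref{eq:entropy}; the inequality then becomes the asserted one. The main obstacle is the rigorous justification of the entropy inequality from the one-sided bound alone: one must control the Cantor part of $\pa_xw$ and establish the correct sign of the entropy production across shocks. This is where the convexity of $A$ and the nonpositivity of the singular part of $\pa_xw$ enter in an essential way, and where the regularization/commutator estimate (or, equivalently, the $BV$ chain rule) must be carried out with care.
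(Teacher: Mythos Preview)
Your mollification route is exactly the paper's approach, and the derivation of the equation for $w=\pa_xS$ together with the final rearrangement into \eqref{eq:entropy} is correct. The one point you leave vague --- how the commutator acquires a sign --- is made fully explicit in the paper: setting $R_\delta := A(w)*\omega_\delta - A(w_\delta)$, one has $R_\delta \geq 0$ by Jensen's inequality (this is precisely where the convexity of $A$ enters), and the commutator you identify equals $\pa_x\big(\eta'(w_\delta)R_\delta\big) - R_\delta\,\eta''(w_\delta)\,\pa_xw_\delta$. The first piece vanishes in the limit since $R_\delta\to 0$ in $L^p_{loc}$; for the second, mollification preserves the one-sided estimate, so $\pa_xw_\delta=\pa_{xx}S_\delta\le S_\delta$, and hence $-R_\delta\,\eta''(w_\delta)\,\pa_xw_\delta \ge -R_\delta\,\eta''(w_\delta)\,S_\delta \to 0$ by dominated convergence. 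Because everything takes place at the regularized level, no decomposition of $\pa_xw$ into jump and Cantor parts is ever needed --- your concern about controlling the Cantor part via the Vol'pert chain rule is therefore a red herring, and that alternative route can be dispensed with entirely.
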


\begin{proof} 
From Lemma \ref{lem:eqS}, $S$ satisfies \eqref{eqSdistrib}. By differentiation, and using
 the property $\pa_{xx}K=K-\delta_0$, we get
\beq\label{eqdxSdistrib}
\pa_t\pa_xS+\pa_xA(\pa_xS) -a(\pa_xS)S + K*(-\pa_xA(\pa_xS) + a(\pa_xS)S)=0.
\eeq
Consider a sequence of mollifiers $\zeta_n(x)=n\zeta(nx)$, with
$n\in \NN$, $\zeta\in C_0^\infty(\RR)$, $\zeta\geq 0$ and 
$\int_\RR\zeta(x)\,dx=1$. We set $S_n=\zeta_n*S$. 
Then we have
$$
\pa_t\pa_xS_n+\pa_x(A(\pa_xS)*\zeta_n) -\zeta_n*(a(\pa_xS)S) + 
K*\zeta_n*(-\pa_xA(\pa_xS) + a(\pa_xS)S)=0.
$$
We define the commutators $R_n$ and $Q_n$ as follows:
	$$
A(\pa_xS)*\zeta_n = A(\pa_xS_n)+ R_n(t,x),
	$$
	$$\begin{array}{rcl}
\ds Q_n(t,x) &=& \ds -\zeta_n*(a(\pa_xS)S) + 
K*\zeta_n*(-\pa_xA(\pa_xS) + a(\pa_xS)S)  \\[2mm]
	&&\qquad {}+ a(\pa_xS_n)S_n - K*(-\pa_xA(\pa_xS_n)+ a(\pa_xS_n)S_n),
	\end{array}$$
so that the regularized solution satisfies
\begin{equation}
  \label{eq:dxSn}
  \pa_t\pa_xS_n + \pa_x(A(\pa_xS_n)+R_n)-a(\pa_xS_n)S_n + K*(-\pa_xA(\pa_xS_n) + a(\pa_xS_n)S_n)+Q_n=0.
\end{equation}
Let us consider $\eta$ a twice continuously differentiable convex function
and let $q$ be the corresponding entropy flux.
Multiplying equation \eqref{eq:dxSn} by $\eta'(\pa_xS_n)$, we get
\begin{equation}
  \label{eq:Snentrop}
  \pa_t \eta(\pa_xS_n) + \pa_x(q(\pa_xS_n)+\eta'(\pa_xS_n)R_n) + H_n =-\eta'(\pa_xS_n)Q_n+R_n\pa_x(\eta'(\pa_xS_n)),
\end{equation}
where 
	$$
H_n :=  -\eta'(\pa_xS_n)a(\pa_xS_n)S_n + \eta'(\pa_xS_n)[K*(-\pa_xA(\pa_xS_n) + a(\pa_xS_n)S_n)].
	$$

Due to properties of the convolution product, we have
	$$
  R_n\to 0, \qquad Q_n\to 0 \qquad \mbox{ in } L^p_{loc}((0,\infty)\times\RR),
\quad 1\leq p<+\infty,
	$$
so that in the sense of distribution, we have straightforwardly
$$
\pa_x(\eta'(\pa_xS_n)R_n) \to 0, \qquad \eta'(\pa_xS_n) Q_n \to 0
$$
and
$$
H_n \to H:=-\eta'(\pa_xS)a(\pa_xS)S + \eta'(\pa_xS)[K*(-\pa_xA(\pa_xS) + a(\pa_xS)S)],
$$
which is precisely the desired term in the limit equation.
Now we deal with the term $R_n\pa_x(\eta'(\pa_xS_n))$ on the right-hand side, and 
we notice that $R_n\geq 0$ thanks to the Jensen inequality and the convexity of $A$.
Therefore, since $\eta$ is convex, we have
	$$
R_n\pa_x(\eta'(\pa_xS_n)) = R_n \eta''(\pa_xS_n) \pa_{xx}S_n \leq 
R_n \eta''(\pa_xS_n) S_n,
	$$
where we have used the one-sided estimate $\pa_{xx}S_n\leq S_n$ to obtain
the last inequality.
Since $S_n$ is bounded in $L^\infty$ independently of $n$, we can
pass to the limit in this last identity thanks to the Lebesgue
dominated convergence theorem to get
$$
R_n \eta''(\pa_xS_n) S_n \to 0 \quad \mbox{ in } L^1_{loc}((0,\infty)\times \RR).
$$
Finally, letting $n$ going to $+\infty$ in \eqref{eq:Snentrop}, 
we deduce that \eqref{eq:entropy} holds in the distributional sense.
\end{proof}

\begin{remark}
This equation relies strongly on the definition of the flux $J$ in \eqref{DefFluxJ}.
This fact has already been noticed by the authors in \cite{note},
which can be viewed as a particular case of the one studied in this paper
by replacing the elliptic equation \eqref{eqShydro} for $S$ 
by the Poisson equation $-\pa_{xx}S=\rho$.
In this case, the product of $a(\pa_xS)$ by $\rho$ is naturally defined
by $a(\pa_xS)\rho=-\pa_xA(\pa_xS)$, so that equation on $S$ corresponding 
to \eqref{eqdxSdistrib} is given by
	$$
\pa_t\pa_xS + \pa_x A(\pa_xS) = 0.
	$$
This equation is a nonlinear hyperbolic conservation law which is local, 
contrary to \eqref{eqdxSdistrib}. Therefore uniqueness is ensured by entropy conditions.
Since $\pa_xS$ is monotonous ($-\pa_{xx}S=\rho \geq 0$), this can be formulated as 
a chord condition on $A$ (see \cite{BJpg}). If in addition $A$ is convex or concave (i.e. if $a$ is non-decreasing
or non-increasing), this selects only increasing or decreasing shocks.
\end{remark}

\section{Existence and uniqueness for the hydrodynamical problem}
\label{SecExistDual}
In this Section, we focus on the proof of Theorem \ref{ExistFlux}, which can be split in 3 steps. The first one consists in
obtaining the dynamics of aggregates, or in other words of combinations of Dirac masses. Next we obtain the existence of
duality solutions in the sense of Definition \ref{defexist} by proving first that aggregates 
define such a solution, then proceeding to the general case by approximation. This is exactly the same strategy as 
for the pressureless gases in \cite{BJpg}. Finally, uniqueness follows from a careful definition
of the flux of the equation. In this respect, we first underline with an example that Definition \ref{defexist} as it stands does not
give uniqueness, and how the proper definition of the flux singles out a unique solution.

Indeed, let us consider \eqref{eqrhohydro}--\eqref{eqShydro} with 
boundary condition \eqref{bordhydro} where the initial datum 
is assumed to be a Dirac mass in $0$: $\rho^{ini}=\delta_0$. 
We have that $(\delta_0, K*\delta_0)$ is a solution to 
\eqref{eqrhohydro}--\eqref{eqShydro} with initial data $\delta_0$.
Actually, the pair
	\beq\label{rhononuniq}
\rho_1(t,x)=\delta_{x_1(t)}(x);\quad S_1(t,x)=K*\rho_1(t,x)=\frac 12 e^{-|x-x_1(t)|}.
	\eeq
turns out to define a solution in the sense of duality in Definition \ref{defexist}
for several choices of curves $x_1$ with $x_1(0)=0$. Set $b_1(t,x)=a(\pa_xS_1)(t,x)$, and notice 
first that, according to Remark \ref{dual.trans},
$\rho_1$ is a duality solution if $u_1:=\int^x \rho_1\,dx=H(x-x_1(t))$
is a duality solution of the transport equation. Now, from Lemma \ref{aOSL},
$b_1$ satisfies the OSL condition, therefore $u_1$ is a duality solution 
of the transport equation as soon as it is solution in the sense of distributions. 
As detailed in \cite{bj1}, Section 3,
this holds true only if $u$ satisfies some admissibility conditions, namely, the characteristics
of the velocity field have to enter the discontinuity on both side. 
Since $\lim_{x\to x_1^+} b_1(x)=a(-1/2)$ and $\lim_{x\to x_1^-} b_1(x)=a(1/2)$,
the velocity of the shock should satisfy
$a(1/2)>x'_1(t)>a(-1/2)$, which furnishes an infinity of solution.

For any of the previous solutions, the generalized flux given by Theorem \ref{ExistDuality}--3
is $b_1\Dpetit \rho_1 = -\pa_t u_1=-x'_1(t)\delta_{x_1(t)}$.
On the other hand, let us compute the flux $J$ defined by \eqref{DefFluxJ}.
For simplicity, we set here $\alpha=0$ in the definition \eqref{phi} of $\Phi$. With this convention, we get
	$$
a(\pa_xS_1)(t,x)=\left\{
\begin{array}{ll}
\ds -\lambda c, \ & x<x_1(t), \\[2mm]
\ds \lambda c, & x>x_1(t),
\end{array}
\right.
\quad A(\pa_xS_1)(t,x)=\frac 12 \left\{
\begin{array}{ll}
\ds -\lambda c e^{x-x_1(t)}, \quad & x<x_1(t), \\[2mm]
\ds -\lambda c e^{-x+x_1(t)}, & x>x_1(t).
\end{array}
\right.
	$$
Obviously we have $J=0$, so that the condition $\achapo \rho=J$
selects $x'_1(t)=0$, which finally implies $x_1\equiv 0$ since $x_1(0)=0$.

\subsection{Dynamics of aggregates}\label{aggregat}

Let us first consider the motion of aggregates. 
We assume that $\rho^{ini}_n$ is given by a finite sum of Dirac masses:
$\rho^{ini}_n=\sum_{i=1}^n m_i \delta_{x_i^0}$ where $x_1^0<x_2^0<\dots<x_n^0$
and the $m_i$-s are nonnegative.
We look for a couple $(\rho_n,S_n)$ solving in the distributional sense
$\pa_t\rho_n +\pa_x J_n=0$ where the flux $J_n$ is given by \eqref{DefFluxJ} and 
$S_n$ solves \eqref{eqShydro}. We recall that it means that
$S_n=K*\rho_n$ where $K$ is defined in \eqref{Sexplicit}.
Let us set $\rho_n(t,x)=\sum_{i=1}^n m_i \delta_{x_i(t)}$. 
Such a function is a solution in the sense of distributions of
\eqref{eqrhodis} if the function $u_n$ defined by
	\beq\label{defu}
u_n(t,x):=\int^x\rho_n\,dx = \sum_{i=1}^n m_i H(x-x_i(t)),
	\eeq
where $H$ denotes the Heaviside function, is a distributional solution to
	\beq\label{equdis}
\pa_t u_n -\pa_xA(\pa_xS_n) + a(\pa_xS_n) S_n =0.
	\eeq
We have
	$$
S_n(t,x)=\sum_{i=1}^n \frac{m_i}{2} e^{-|x-x_i(t)|},
	$$
	\beq\label{dxSn}
\pa_xS_n(t,x)=-\sum_{i=1}^n \frac{m_i}{2}\mbox{ sign }(x-x_i(t)) e^{-|x-x_i(t)|}.
	\eeq
Straightforward computations prove that we have in the distributional sense
\beq\label{calAdis}
\pa_x A(\pa_xS_n)=a(\pa_xS_n) S_n + \sum_{i=1}^n [A(\pa_xS_n)]_{x_i} \delta_{x_i},
\eeq
where $[f]_{x_i}=f(x_i^+)-f(x_i^-)$ is the jump of the function $f$ at $x_i$.
Injecting \eqref{defu} and \eqref{calAdis} in \eqref{equdis}, we find
$$
-\sum_{i=1}^n m_i x'_i(t) \delta_{x_i(t)} = \sum_{i=1}^n [A(\pa_xS_n)]_{x_i} \delta_{x_i}.
$$
Thus the dynamics of aggregates is finally given by
$$
m_i x'_i(t) = -[A(\pa_xS_n)]_{x_i(t)}, \quad \mbox{ for } i=1,\dots, n.
$$
We complement this system of ODEs by the initial data $x_i(0)=x_i^0$.
More precisely, recalling that $K(x)=\frac{1}{2}e^{-|x|}$, using \eqref{dxSn}
this latter system can be rewritten~:
\beq\label{dynagg}
m_i x'_i(t)=A\left(\frac{m_i}{2}+ \sum_{j\neq i} m_j \pa_xK(x_j-x_i)\right)
-A\left(-\frac{m_i}{2}+ \sum_{j\neq i} m_j \pa_xK(x_j-x_i)\right).
\eeq
Recall that, from the definition of the coefficient $a$ in \eqref{eqahydro} with
\eqref{phi}, $a$ is nondecreasing and odd, so that $A$ is a convex function. This 
implies that for $i=1,\dots,n-1$, $x'_i\geq x'_{i+1}$, therefore, aggregates can collapse in finite time but 
an aggregate cannot split. This is a direct consequence of the fact that 
we are considering positive chemotaxis, i.e. $a$ is nondecreasing.
If there exists a time $t_1$ for which we have for instance 
$x_i(t_1)=x_{i+1}(t_1)$, then the dynamics for $t>t_1$ is defined 
as above except that we replace $m_i$ by $m_i+m_{i+1}$ and 
$x_i(t)=x_{i+1}(t)$ for $t>t_1$.
Moreover $A$ is even, then when $n=1$, we have $x'_1=0$ and $x_1(t)=x_1^0$.
Thus if aggregates collapse such that they form a single aggregate of mass 
$\sum_i m_i$, then this aggregate does not move for larger times.

\subsection{Existence of duality solutions}\label{sub:existDual}

We have constructed $(\rho_n,S_n)$ which is a solution of
\eqref{eqrhodis}-\eqref{DefFluxJ}-\eqref{eqShydro} in the distributional sense
for the given initial data $\rho_n^{ini}$.
We recall the following result due to Vol'pert \cite{volpert}
(see also \cite{ambrosioBV}): if $u$ belongs to $BV(\RR)$ and
$f\in C^1(\RR)$ with $f(0)=0$, then $v=f\circ u$ belongs to $BV(\RR)$
and 
	$$
\exists\, \bar{f_u}\ \mbox{ with } \bar{f_u}=f'(u) \mbox{ a.e. }\ 
\mbox{ such that }\ (f\circ u)' = \bar{f_u} u'.
	$$
Together with the fact that $A$ is an antiderivative of $a$ such that $A(0)=0$,
 this result implies that there exists a function $\achapo_n$ such that
$$
J_n:=-\pa_x(A(\pa_xS_n))+a(\pa_xS_n)S_n = \achapo_n \rho_n, 
\quad \mbox{ and } \quad \achapo_n=a(\pa_xS_n) \mbox{ a.e. }
$$
Thus $\rho_n$ is a solution in the distributional sense of
$$
\pa_t\rho_n + \pa_x(\achapo_n \rho_n) = 0.
$$
Moreover, we deduce from \eqref{dxSn} that $a(\pa_xS_n)$ is piecewise
continuous with the discontinuity lines defined by $x=x_i$, $i=1,\dots,n$.
We can apply Theorem \ref{dual2distrib} which gives that 
$\rho_n$ is a duality solution and that $\achapo_n$ is a universal
representative of $a(\pa_xS_n)$. Then the flux is given by
$a(\pa_xS_n)\Dpetit \rho_n=J_n$.

Let us yet consider the case of any initial data $\rho^{ini}\in \calM_b(\RR)$.
We approximate $\rho^{ini}$ by $\rho^{ini}_n=\sum_{i=1}^n m_i \delta_{x_i^0}$ 
with $\rho_n^{ini}\rightharpoonup \rho^{ini}$ in $\calM_b(\RR)$.
By the same token as above, we can construct a solution 
$(\rho_n,S_n=K*\rho_n)$ with $\rho_n(t=0)=\rho_n^{ini}=\sum_{i=1}^n m_i\delta_{x_i^0}$, 
which solves in the sense of duality
	$$
\pa_t\rho_n +\pa_x(a(\pa_xS_n)\rho_n)= 0,
	$$
in the sense of distributions
	$$
\pa_t\rho_n + \pa_x J_n=0, \quad J_n = -\pa_x A(\pa_xS_n)+a(\pa_xS_n) S_n,
	$$
 and which satisfies
	$$
\achapo_n \rho_n = J_n, \quad \achapo_n=a(\pa_xS_n) \mbox{ a.e. }
	$$
Moreover, since $\pa_xS_n$ is bounded in $L^\infty$ uniformly with 
respect to $n$ by construction,
we can extract a subsequence of $(a(\pa_xS_n))_n$ that converges
in $L^\infty -weak*$ towards $b$.
Since from Lemma \ref{aOSL}, $a(\pa_xS_n)$ satisfies the OSL condition, 
we deduce from Theorem \ref{ExistDuality} 4) that, up to an extraction,
$\rho_n\rightharpoonup \rho$ in $\smes$ and $\achapo_n\rho_n\rightharpoonup \achapo \rho$
weakly in $\calM_b(]0,T[\times\RR)$, $\rho$ being a duality solution
of the scalar conservation law with coefficient $b$.
With Lemma \ref{convpaS}, we deduce that $\pa_xS_n\to \pa_xS$ a.e., 
it implies in particular that 
$J_n\to J:=-\pa_xA(\pa_xS)+a(\pa_xS)S$ in ${\mathcal D}'(\RR)$ 
and that $a(\pa_xS_n)\to a(\pa_xS)$ a.e. By uniqueness of the weak
limit, we have $b=a(\pa_xS)$. Moreover $J=\achapo \rho$ a.e. and
$\rho$ satisfies then \eqref{eqrhodis}.
Then $(\rho,S)$ is a solution as in Theorem \ref{ExistFlux}, 
this concludes the proof of the existence.

\subsection{Uniqueness of solutions}\label{uniqueS}

Let us consider yet the study of the uniqueness.
As shown above, Definition \ref{defexist} is not sufficient
to ensure uniqueness. Therefore, we will use the fact that we have 
a duality solution $\rho$ that satisfies \eqref{eqrhodis}
in ${\cal D}'([0,T]\times \RR)$ with the initial data $\rho^{ini}$
and with the flux $J$ given by \eqref{DefFluxJ}.
This equation leads to the non-local evolution equation on $S$ 
\eqref{eqSdistrib} as stated in Lemma \ref{lem:eqS}.

Another key point is the one-sided estimate $\pa_{xx}S\leq S$. In fact,
if we consider for instance $\rho^{ini}=0$, then it is obvious that
$\rho=0$ is a solution of \eqref{eqrhohydro}--\eqref{eqShydro}. 
However, if we allow $\rho$ to be nonpositive, i.e. if the corresponding
chemoattractant concentration $S$ does not satisfy the one-sided estimate
$\pa_{xx}S\leq S$, then we can build a simple example of non-uniqueness.
Indeed we have that
	$$
\rho(t,x)=\delta_{-x_1(t)}(x) -2 \delta_0(x) +\delta_{x_1(t)}(x)
	$$
is a duality solution of \eqref{eqrhohydro}--\eqref{eqShydro} which 
satisfies \eqref{eqrhodis}, provided $x_1(0)=0$ and 
\eqref{dynagg} is satisfied. This readily gives
	$$
x'_1(t)=A\big(\frac 12 +e^{-x_1}+\frac 12 e^{-2x_1}\big)
-A\big(-\frac 12 + e^{-x_1} +\frac 12 e^{-2x_1}\big).
	$$
Here by convexity of $A$, we have $x'_1\geq 0$.

\begin{theorem}\label{uniqS}
Let $S_1$ and $S_2$ be two weak solutions in $C([0,T];W^{1,1}(\RR))$ of
\eqref{eqSdistrib} with initial data $S_1^{ini}$ and 
$S_2^{ini}$ respectively. If we assume moreover that
$\pa_xS_1$ and $\pa_xS_2$ belongs to $L^\infty([0,T];BV(\RR))$ and 
that the one-sided estimate
$$
\pa_{xx}S_i \leq S_i, \qquad i=1,2,
$$
holds in the distributional sense.
Then there exists a nonnegative constant $C$ such that 
$$
\|S_1-S_2\|_{L^\infty([0,T];W^{1,1}(\RR))} \leq 
C \|S_1^{ini}-S_2^{ini}\|_{W^{1,1}(\RR)}.
$$
\end{theorem}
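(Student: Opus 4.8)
The plan is to prove an $L^1$ contraction (stability) estimate for $\pa_xS$ by a Kruzhkov-type doubling-of-variables argument applied to the entropy inequality established in Lemma \ref{entropS}, then recover the full $W^{1,1}$ estimate. The central idea is that, after differentiating the nonlocal equation \eqref{eqSdistrib}, the quantity $w_i := \pa_xS_i$ solves a scalar conservation law $\pa_t w_i + \pa_x A(w_i) = F_i$ with flux $A$ and a nonlocal source $F_i = a(\pa_xS_i)S_i - K*(-\pa_xA(\pa_xS_i) + a(\pa_xS_i)S_i)$; the one-sided estimate $\pa_{xx}S_i \le S_i$ is exactly the structural condition that makes the entropy inequality \eqref{eq:entropy} available and controls the sign of the relevant error terms.

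The key steps, in order, are as follows. First I would apply Lemma \ref{entropS} to each $S_i$ with the Kruzhkov family of entropies $\eta(\cdot) = |\cdot - k|$, approximated by smooth convex functions, obtaining entropy inequalities for $w_1$ and $w_2$. Next I would double the variables, writing the inequality for $w_1(t,x)$ with $k = w_2(t,y)$ and for $w_2(t,y)$ with $k = w_1(t,x)$, add them, and integrate against a symmetric approximation of the diagonal. Because $A$ is convex (recalled after \eqref{dynagg}) and both solutions satisfy the \emph{same} one-sided bound $\pa_{xx}S_i \le S_i$, the entropy-dissipation terms have a favorable sign and the contribution $R_n\eta''(\pa_xS_n)\pa_{xx}S_n \le R_n\eta''(\pa_xS_n)S_n$ vanishes in the limit exactly as in the proof of Lemma \ref{entropS}. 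This produces a differential inequality of the form
\[
\frac{d}{dt}\int_\RR |w_1 - w_2|(t,x)\,dx \le \int_\RR \big|F_1 - F_2\big|\,dx.
\]
The remaining step is to estimate the source difference $\|F_1 - F_2\|_{L^1}$. Since $a$ and $A$ are Lipschitz with bounded derivative (from \eqref{phi}), $\pa_x A(w_i)$ and $a(w_i)S_i$ are controlled, and because $K \in L^1$ with $\|K\|_{L^1}=1$, the convolution terms are bounded in $L^1$ by their arguments; one bounds $\|F_1-F_2\|_{L^1}$ by $C\big(\|w_1-w_2\|_{L^1} + \|S_1-S_2\|_{L^1}\big)$. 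Using $S_i = K*\rho_i$ and $\pa_{xx}S_i = S_i - \rho_i$, together with the relation $w_i = \pa_xS_i$, I would close the estimate by controlling $\|S_1-S_2\|_{L^1}$ and $\|S_1-S_2\|_{L^\infty}$ in terms of $\|w_1-w_2\|_{L^1}$, so that the right-hand side is bounded by $C\|w_1-w_2\|_{L^1}$. A Gr\"onwall argument then yields
\[
\|w_1-w_2\|_{L^\infty([0,T];L^1)} \le C\|w_1^{ini}-w_2^{ini}\|_{L^1},
\]
and combining this with the corresponding zeroth-order bound on $\|S_1-S_2\|_{L^\infty([0,T];L^1)}$ gives the claimed $W^{1,1}$ estimate.

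The main obstacle I anticipate is handling the \emph{nonlocal} source term $F_i$ within the doubling-of-variables framework: classical Kruzhkov uniqueness relies on a purely local flux, whereas here the coupling through $K*(\cdot)$ introduces terms depending on the full profile of $w_i$ rather than its pointwise value. The delicate point is to show that these nonlocal terms are genuinely Lipschitz perturbations in $L^1$ and do not interfere with the entropy-dissipation cancellation; one must verify carefully that the convolution structure, combined with $\pa_{xx}K = K - \delta_0$ and the uniform $L^\infty$ and $BV$ bounds on $\pa_xS_i$, keeps all error terms integrable and of the right sign. Closing the loop between $\|S_1-S_2\|$ and $\|w_1-w_2\|$—that is, ensuring the zeroth-order quantity does not require an estimate one is still trying to prove—will require exploiting the explicit kernel $K$ rather than abstract elliptic regularity.
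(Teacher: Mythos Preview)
Your approach is essentially the same as the paper's: apply Lemma \ref{entropS} with Kru\v{z}kov entropies, double variables to obtain an $L^1$ contraction inequality for $w_i=\pa_xS_i$, treat the nonlocal terms as Lipschitz source terms, and close by Gronwall. The paper carries this out exactly as you outline, arriving at
\[
\frac{d}{dt}\int_\RR|\pa_x(S_1-S_2)|\,dx \le C_0\int_\RR|\pa_x(S_1-S_2)|\,dx + C_1\int_\RR|S_1-S_2|\,dx.
\]

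There is, however, one concrete point where your closing strategy needs adjustment. You propose to control $\|S_1-S_2\|_{L^1}$ \emph{algebraically} in terms of $\|w_1-w_2\|_{L^1}$ so as to reduce everything to a single Gronwall inequality for $\|w_1-w_2\|_{L^1}$. This does not work on the whole line: there is no Poincar\'e inequality bounding $\|f\|_{L^1(\RR)}$ by $\|f'\|_{L^1(\RR)}$ (the $L^\infty$ bound $\|f\|_{L^\infty}\le\|f'\|_{L^1}$ is fine, but it does not give $L^1$). The term $a(\pa_xS_i)S_i$ in the source $F_i$ forces a genuine $\|S_1-S_2\|_{L^1}$ contribution that cannot be absorbed this way. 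The paper resolves this not by an algebraic bound but by deriving a \emph{second} differential inequality directly from \eqref{eqSdistrib} (undifferentiated),
\[
\frac{d}{dt}\int_\RR|S_1-S_2|\,dx \le C_2\int_\RR|\pa_x(S_1-S_2)|\,dx + C_3\int_\RR|S_1-S_2|\,dx,
\]
then summing the two inequalities and applying Gronwall to the full $W^{1,1}$ norm. With this modification your plan goes through verbatim.
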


\begin{proof}
We start from the entropy inequality \eqref{eq:entropy} of Lemma 
\ref{entropS}. 
Using standard regularization arguments, it is well-known that we can 
apply this inequality to the family of Kru\v{z}kov entropies $\eta_\kappa(u) = |u-\kappa|$. 
Then, the doubling of variables technique developed by Kru\v{z}kov allows to 
justify the following computation. 
Assume $S_1$ and $S_2$ are two weak solutions of \eqref{eqSdistrib}, 
then in the distributional sense, we have
$$
\begin{array}{c}
\ds \pa_t|\pa_x(S_1-S_2)| + \pa_x(\mbox{ sign}(\pa_xS_1-\pa_xS_2)(A(\pa_xS_1)-A(\pa_xS_2))) \leq  \\[2mm]
\ds \mbox{ sign}(\pa_xS_1-\pa_xS_2)\big(\pa_xK*(A(\pa_xS_1)-A(\pa_xS_2)) + 
a_1S_1-a_2S_2 - K*(a_1S_1-a_2S_2)\big),
\end{array}
$$
where we denote $a_1=a(\pa_x S_1)$ and $a_2=a(\pa_xS_2)$.
Integrating with respect to $x$ and using the properties
of the convolution product, we deduce
$$
\frac{d}{dt} \int_\RR |\pa_x(S_1-S_2)|\,dx \leq \|\pa_xK\|_{\infty}
\int_\RR |A(\pa_xS_1)-A(\pa_xS_2)|\,dx + (1+\|K\|_{\infty}) 
\int_\RR |a_1S_1 - a_2S_2|\,dx.
$$
The function $a$ being regular, we have
\beq\label{borndxS12}
\frac{d}{dt} \int_\RR |\pa_x(S_1-S_2)|\,dx \leq C_0
\int_\RR |\pa_x(S_1-S_2)|\,dx + C_1 \int_\RR |S_1-S_2|\,dx.
\eeq
In the same way as for equation \eqref{eqSdistrib}, this leads to 
\beq\label{bornS12}
\frac{d}{dt} \int_\RR |S_1-S_2|\,dx \leq C_2
\int_\RR |\pa_x(S_1-S_2)|\,dx + C_3 \int_\RR |S_1-S_2|\,dx.
\eeq
Summing \eqref{bornS12} and \eqref{borndxS12}, we deduce that there 
exists a nonnegative constant $C$ such that
$$
\frac{d}{dt} \|S_1-S_2\|_{W^{1,1}(\RR)} \leq C\|S_1-S_2\|_{W^{1,1}(\RR)}.
$$
Applying the Gronwall Lemma allows to conclude the proof.
\end{proof}

{\bf Proof of uniqueness in Theorem \ref{ExistFlux}.}
Let us assume that we have two duality solutions $(\rho_1,S_1)$ and 
$(\rho_2,S_2)$ such as in Theorem \ref{ExistFlux}. Therefore,
from Lemma \ref{lem:eqS}, $S_1$ and $S_2$ are weak solutions of 
\eqref{eqSdistrib}. Using Theorem \ref{uniqS}, we conclude that $S_1=S_2$.
Thus $\rho_1=K*S_1=K*S_2=\rho_2$.
\qed

\section{Convergence for the kinetic model}\label{SecConvKinet}

In this section we investigate the rigorous derivation of 
\eqref{eqrhohydro}--\eqref{eqShydro} from the microscopic model \eqref{cin1D}.
First we state some estimates on the moments of the solution of the kinetic 
problem.
\begin{lemma}\label{momentf}
Let $(f_\eps,S_\eps)$ be a solution of the kinetic problem 
\eqref{cin1D}--\eqref{ellip1D}. Then for all $t\in[0,T]$ and
all $\eps>0$ we have
	$$
\int_\RR\int_V |v|^k f_\eps \,dxdv = |v|^k |\rho^{ini}|(\RR)\, ,\quad 
\mbox{ k }\in \NN. 
	$$
\end{lemma}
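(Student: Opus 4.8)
The plan is to observe that, because every velocity in $V=\{-c,c\}$ has the same modulus $c$, the weight $|v|^k=c^k$ is a constant that factors out of the integral; the lemma therefore reduces to the conservation of total mass $\int_\RR\rho_\eps(t,x)\,dx$. Concretely, I would show that the quantity $\int_\RR\int_V |v|^k f_\eps\,dx\,dv$ is independent of $t$, and then evaluate it at $t=0$ using the mollified initial data.

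To get the conservation in time, I would multiply the kinetic equation \eqref{cin1D} by $|v|^k$ and integrate over $x\in\RR$ and $v\in V$, bearing in mind that since $V$ is the two-point set, integration in $v$ amounts to summing the contributions of $v=c$ and $v=-c$. The transport term $\int_\RR\int_V |v|^k v\,\pa_x f_\eps\,dx\,dv$ vanishes: for each fixed $v$ one has $\int_\RR \pa_x f_\eps(t,x,v)\,dx=0$ by the null boundary condition at infinity. The right-hand side also vanishes after integration against $|v|^k$, and this is the key point. Since $|v|^k=c^k$ is the same at $v=c$ and $v=-c$ (that is, $|v|^k$ is even in $v$), the gain and loss terms cancel pairwise,
\[
\sum_{v\in V} c^k\big(\Phi(-v\pa_xS_\eps) f_\eps(-v) - \Phi(v\pa_xS_\eps) f_\eps(v)\big) = 0,
\]
which is exactly the cancellation already exploited in the moment equation \eqref{moment} (the case $k=0$). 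Hence $\frac{d}{dt}\int_\RR\int_V |v|^k f_\eps\,dx\,dv = 0$, so the moment is constant on $[0,T]$.

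It then remains to compute the moment at $t=0$. Using $|v|^k=c^k$ on $V$ and the initial data \eqref{initcin}, we have $\int_\RR\int_V |v|^k f_\eps^{ini}\,dx\,dv = c^k\int_\RR \rho_\eps^{ini}\,dx$ with $\rho_\eps^{ini}=\eta_\eps*\rho^{ini}$. Because $\eta_\eps$ is a mollifier, $\int_\RR\eta_\eps=1$, and $\rho^{ini}\ge 0$, convolution preserves the total mass, so $\int_\RR\rho_\eps^{ini}\,dx = \int_\RR d\rho^{ini} = |\rho^{ini}|(\RR)$. Combining this with the time-conservation and recalling $c^k=|v|^k$ yields the stated identity.

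The only genuine obstacle is justifying the vanishing of the transport flux, i.e. that $f_\eps(t,\cdot,v)$ decays fast enough in $x$ to legitimize the integration by parts; I would settle this by invoking the null boundary condition together with the $C([0,T]\times\RR\times V)$ regularity and the $L^1$-in-$x$ control furnished by the existence theory recalled just before the statement. Everything else is the elementary symmetry computation above.
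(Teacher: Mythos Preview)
Your proof is correct and follows essentially the same approach as the paper: both rely on the observation that $|v|^k=c^k$ is constant on $V$, so the moment reduces to $c^k\int_\RR\rho_\eps\,dx$, and the result follows from mass conservation. The only difference is that you spell out the mass-conservation argument (vanishing of transport and collision terms upon integration, plus the mollifier identity $\int\rho_\eps^{ini}=|\rho^{ini}|(\RR)$), whereas the paper simply invokes it as a known property of \eqref{cin1D}.
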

\begin{proof}
Since $v\in V=\{-c,c\}$, $|v|$ is constant therefore 
	$$
\int_\RR\int_V |v|^k f_\eps\,dxdv = |v|^k \int_\RR \rho_\eps\,dx.
	$$
The result follows then directly from the mass conservation in \eqref{cin1D}.
\end{proof}

\noindent{\bf Proof of Theorem \ref{ConvKinet}. }
Let $(f_\eps,S_\eps)$ be a solution of \eqref{cin1D}--\eqref{ellip1D}.
For fixed $\eps>0$, we have $f_\eps \in C([0,T]\times\RR\times V)$.
Define $\rho_\eps:=\int_V f_\eps\,dv$, $J_\eps:=\int_V v f_\eps\,dv$ and 
$
a(\pa_xS_\eps)= - c \phi(c\pa_xS_\eps).
$
We can rewrite the kinetic equation \eqref{cin1D} as
$$
\pa_t f_\eps + v\pa_xf_\eps= \frac{1}{\eps}(\Phi(-v\pa_xS_\eps)\rho_\eps
-2 f_\eps). 
$$
Taking the zeroth and first order moments, we get
\begin{eqnarray}
&\ds \pa_t \rho_\eps + \pa_x J_\eps = 0,
\label{moment0} \\[2mm]
&\ds \pa_t J_\eps + v^2 \pa_x \rho_\eps = \frac{2}{\eps}
 (a(\pa_xS_\eps)\rho_\eps -J_\eps).
\label{moment1}
\end{eqnarray}

From \eqref{moment0}, we deduce that $\forall\, t\in [0,T]$, $|\rho_\eps(t,\cdot)|(\RR) = |\rho^{ini}|(\RR)$. 
Therefore, for all $t\in [0,T]$ the sequence $(\rho_\eps(t,\cdot))_\eps$
is relatively compact in $\calM_b(\RR)-\sigma(\calM_b(\RR),C_0(\RR))$.
Moreover, there exists $u_\eps\in L^\infty([0,T],BV(\RR))$ such that
$\rho_\eps=\pa_x u_\eps$. From \eqref{moment0}, we get that 
$\pa_tu_\eps=-J_\eps$ and thanks to Lemma \ref{momentf} we deduce that
$u_\eps$ is bounded in Lip$([0,T],L^1(\RR))$. This implies the equicontinuity 
in $t$ of $(\rho_\eps)_\eps$. Thus the sequence $(\rho_\eps)_\eps$ is 
relatively compact in $\smes$ and we can extract a subsequence still
denoted $(\rho_\eps)_\eps$ that converges towards $\rho$ in $\smes$.

We recall that $S_\eps(t,x)=(K*\rho_\eps(t,\cdot))(x)$ where $K(x)=\frac 12 e^{-|x|}$.
Denoting $S(t,x):=(K*\rho(t,\cdot))(x)$, since $\rho\in \smes$, we have
$\pa_xS\in L^\infty([0,T];BV(\RR))$. From Lemma \ref{convpaS}, 
the sequence $(\pa_xS_\eps)_\eps$ converges in $L^\infty w-*$ and a.e. to $\pa_xS$ as $\eps$ goes to $0$. Lemma 
\ref{aOSL} ensures that both $a(\pa_xS_\eps)$ and $a(\pa_xS)$ satisfy the OSL condition.

From \eqref{moment0}--\eqref{moment1}, we have in the distributional sense
\beq\label{eqrhoeps1}
\pa_t\rho_\eps+\pa_x(a(\pa_xS_\eps)\rho_\eps) = 
\pa_x(a(\pa_xS_\eps)\rho_\eps -J_\eps)=
\frac {\eps}{2} \pa_x (\pa_tJ_\eps + v^2\pa_x\rho_\eps) = R_\eps.
\eeq
Now, for all $\psi\in C^2_c((0,T)\times\RR)$, we deduce from Lemma \ref{momentf}
$$
\left|\int (\pa_tJ_\eps + v^2 \pa_x\rho_\eps)\pa_x\psi\,dxdt \right| 
\leq |v||\rho^{ini}|(\RR) \|\pa_t\pa_x\psi\|_{L^\infty} + 
|v|^2|\rho^{ini}|(\RR)\|\pa_{xx}\psi\|_{L^\infty}.
$$
This implies that the limit in the distributional sense of the right-hand 
side $R_\eps$ of \eqref{eqrhoeps1} vanishes.

Now we multiply equation \eqref{ellip1D} by $a(\pa_xS_\eps)$
and use again the antiderivative $A$ of $a$ to obtain
\beq\label{fluxeps}
a(\pa_xS_\eps)\rho_\eps = -\pa_x(A(\pa_xS_\eps)) + a(\pa_xS_\eps)S_\eps,
\eeq
so that we can rewrite the conservation equation \eqref{eqrhoeps1} as follows, in ${\cal D}'(\RR)$:
\beq\label{eqrhoeps}
\pa_t\rho_\eps +\pa_x\left(-\pa_x A(\pa_xS_\eps) + a(\pa_xS_\eps)S_\eps\right)
=\frac {\eps}{2} \pa_x (\pa_tJ_\eps + v^2\pa_x\rho_\eps).
\eeq
Taking the limit $\eps\to 0$ of equation \eqref{eqrhoeps} in the sense of distributions, 
we get
	\beq\label{eq:rholim}
\pa_t\rho +\pa_x\left(-\pa_x A(\pa_xS) + a(\pa_xS)S\right) = 0\qquad\mbox{in }{\cal D}'(\RR),
	\eeq
where $S(t,x)=(K*\rho(t,\cdot))(x)$.
Therefore the pair $(\rho,S)$ satisfies \eqref{eqrhodis}--\eqref{DefFluxJ}. 
In addition, $\rho$ is nonnegative as a limit of nonnegative measures, so that Lemma \ref{propS} implies the 
one-sided estimate $\pa_{xx}S\le S$. Thus we are in position to apply Lemma \ref{lem:eqS} and Theorem \ref{uniqS}, which
give uniqueness for $S$, and consequently for $\rho$. Therefore the whole sequence $\rho_\eps$ converges
to $\rho$ in $\smes$.
We recall that we have chosen the initial data such that 
$\rho_\eps^{ini}=\eta_\eps*\rho^{ini}$ where $\eta_\eps$ is a mollifier.
Therefore $\rho_\eps^{ini}\rightharpoonup \rho^{ini}$ in 
$\calM_b(\RR)-\sigma(\calM_b(\RR),C_0(\RR))$. 

Thus we have constructed a solution that satisfies \eqref{eq:rholim}
in the distributional sense, in other words, we have defined a solution 
of the problem \eqref{eqrhohydro}--\eqref{eqShydro} thanks to its flux.
A natural question is to know whether we can define a velocity 
corresponding to this flux. From the theory of duality solutions
(see Theorem \ref{ExistDuality}), it boils down to show that the 
above constructed solution is a duality solution. 
From Vol'pert calculus \cite{volpert} we infer
the existence of $a_S$ such that $a_S=a(\pa_xS)$ a.e. and 
	$$
\pa_x(A(\pa_xS)) = a_S \pa_{xx}S.
	$$
Therefore
\beq\label{flux_Volpert}
-\pa_x(A(\pa_xS)) + a(\pa_xS)S = a_S \rho \ \mbox{ a.e. }, 
\quad \mbox{with }\ a_S = a(\pa_xS) \ \mbox{ a.e. }
\eeq
Using equation \eqref{eq:rholim} we have in the distributional sense
\beq\label{eqbis}
\pa_t \rho + \pa_x (a_S \rho) = 0.
\eeq
However, we have proved in Section \ref{uniqueS} that such a solution
is unique. We deduce that the solution $(\rho,S)$ obtained by the 
hydrodynamical limit above is the duality solution of Theorem \ref{ExistFlux}.
It concludes the proof of Theorem \ref{ConvKinet}.
\qed

\begin{remark}
In the proof above, the macroscopic flux $J$ defined in \eqref{DefFluxJ} 
appears to be the limit of the microscopic flux $J_\eps$.
Indeed from \eqref{moment1} and \eqref{fluxeps} we
deduce that, in the distributional sense,
	$$
J_\epsilon \longrightarrow J := -\pa_xA(\pa_xS)+a(\pa_xS)S.
	$$
This natural definition of the flux allows to get the uniqueness 
of the solutions of the coupled system \eqref{eqrhohydro}--\eqref{eqShydro}
thanks to equations \eqref{eqSdistrib}--\eqref{eqdxSdistrib}.
Such a technique to establish the hydrodynamic limit has been proposed
in \cite{NPS}. But the authors do not state that their limit is a duality
solution and do not define a velocity and therefore a flow corresponding 
to their flux. In the limit of the Vlasov-Poisson-Fokker-Planck system, 
this result has been investigated in \cite{note}.
\end{remark}

\section{Numerical issue}
\label{Numeriq}

\subsection{Finite time of collapse}

Before focusing on the numerical simulations, let us clarify the 
dynamics of the model.
In the case of $n$ Dirac masses, $m_i\geq 0$ for $i=1,\ldots,n$,
located at positions $x_1<\dots<x_n$, we recall that the
time evolution is governed by system \eqref{dynagg}:
\beq\label{dynagg1}
m_i x'_i(t) = A\left(\frac{m_i}{2}+ \sum_{j\neq i} m_j \pa_xK(x_j-x_i)\right)
-A\left(-\frac{m_i}{2}+ \sum_{j\neq i} m_j \pa_xK(x_j-x_i)\right),
\eeq
for $i=1,\dots,n$, 
where we recall that $A$ is an antiderivative of $a$ such that $A(0)=0$.
We deduce that for all $t>0$, and for $i=1,\dots,n$,
\beq\label{dynagg2}
\begin{array}{l}
\ds \exists\, \gamma_i\in \left(-\frac{m_i}{2}+ \sum_{j\neq i} m_j \pa_xK(x_j-x_i),
\frac{m_i}{2}+ \sum_{j\neq i} m_j \pa_xK(x_j-x_i)\right) \\[2mm]
\ds \mbox{ such that }
x'_i(t)=a(\gamma_i(t)).
\end{array}
\eeq

\begin{proposition}\label{propdynamic}
Let us assume that there exists $n\in \NN^*$ such that
	$$
\rho^{ini}(x)= \sum_{i=1}^n m_i^0 \delta_{x_i^0}(x),
	$$
with $m_i^0\geq 0$, for $i=1,\ldots,n$.
We assume in addition that $a$ is a nondecreasing and odd real function.
Then the duality solution $\rho$ of Theorem \ref{ExistFlux} 
has the following properties~:
\begin{enumerate}
\item If $n=1$, $x_1(t)=x_1^0$ for all $t>0$. Then $\rho(t)=\rho^{ini}$ 
for all $t>0$.
\item For $i=1,\dots,n-1$, $x'_i(t)\geq x'_{i+1}(t)$ therefore 
$x_{i+1}-x_i\leq x_{i+1}^0-x_i^0$.
\item There exists $c^*\in [x_1^0,x_n^0]$ and $T^*>0$ such that 
$\rho(t,x)=\delta_{c^*}(x)$ for all $t>T^*$.
\end{enumerate}
\end{proposition}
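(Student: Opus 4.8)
The plan is to exploit the dynamics of aggregates derived in Section \ref{aggregat}, namely the ODE system \eqref{dynagg1}, together with the convexity and parity of the antiderivative $A$. Throughout we work with the ordered positions $x_1(t)\le x_2(t)\le\cdots\le x_n(t)$, with the convention that whenever two particles collide they merge into a single Dirac mass carrying the summed weight, so that for all later times the evolution of the coalesced cluster is governed again by \eqref{dynagg1} with the updated masses. This merging is consistent precisely because $a$ is nondecreasing, hence by the remark following \eqref{dynagg} an aggregate cannot split once formed.

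\textbf{Items 1 and 2.} Item 1 is immediate: for $n=1$ the sum over $j\ne i$ is empty, so \eqref{dynagg1} reads $m_1 x_1'(t)=A(m_1/2)-A(-m_1/2)$, which vanishes because $A$ is even; thus $x_1(t)\equiv x_1^0$ and $\rho(t)=\rho^{ini}$. For item 2 I would use the representation \eqref{dynagg2}, which writes $x_i'(t)=a(\gamma_i(t))$ for some $\gamma_i$ in the stated interval. Since $a$ is nondecreasing, it suffices to compare the arguments: I would show that the upper endpoint of the interval for $i+1$ does not exceed the lower endpoint for $i$, or more directly use the monotonicity argument already invoked after \eqref{dynagg}, that convexity of $A$ forces $x_i'\ge x_{i+1}'$. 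Integrating $x_{i+1}'-x_i'\le 0$ in time gives $x_{i+1}(t)-x_i(t)\le x_{i+1}^0-x_i^0$, which is the desired contraction of gaps.

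\textbf{Item 3} is the crux and the main obstacle. The inequality $x_{i+1}-x_i\le x_{i+1}^0-x_i^0$ from item 2 only shows gaps are nonincreasing, not that they actually reach zero in finite time; I need a quantitative lower bound on the closing speed $x_i'-x_{i+1}'$. The key step is to obtain a strictly positive lower bound on the relative velocity of adjacent clusters as long as they remain distinct. Here I would use strict convexity of $A$ on the relevant range (which follows since $a$ is strictly increasing on $(-\alpha,\alpha)$, or at worst use the strict jump built into $\phi$): for two adjacent masses the difference $x_i'-x_{i+1}'$ can be bounded below by a positive constant depending on the masses and on $\|\pa_xK\|_\infty$, uniformly while the configuration stays spread out. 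Because the total mass $\sum_i m_i$ and the support are preserved in a bounded region (the positions stay in $[x_1^0,x_n^0]$ by item 2), this yields collapse of at least one gap in finite time. I would then argue by a finite induction on $n$: after the first collision at some time $t_1$, the number of distinct aggregates strictly decreases, and the process restarts with at most $n-1$ clusters in a bounded interval; iterating at most $n-1$ times produces a single aggregate at some finite time $T^*$, located at some $c^*\in[x_1^0,x_n^0]$. By item 1 applied to the terminal single mass, that aggregate is stationary for $t>T^*$, giving $\rho(t,x)=\delta_{c^*}(x)=\left(\sum_i m_i^0\right)\delta_{c^*}(x)$ for all $t>T^*$.

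The delicate point to watch is the uniformity of the lower bound on the collapse speed: as clusters approach one another the terms $\pa_xK(x_j-x_i)$ vary, so I would need to verify that the gain from strict convexity of $A$ is not swamped by these interaction terms. The cleanest route is to track the extreme gap (say between the leftmost two surviving clusters) and show its closing rate is bounded below by a constant as long as it is positive, using that $A$ is strictly convex and even so that $A(s+m/2)-A(s-m/2)$ is strictly increasing in $s$; this gives the strict ordering $x_i'>x_{i+1}'$ with a gap that does not degenerate. Uniqueness of the resulting $\rho$ is guaranteed throughout by Theorem \ref{ExistFlux}, so the dynamics \eqref{dynagg1} genuinely describes the duality solution.
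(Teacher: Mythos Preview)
Your treatment of items 1 and 2 is essentially the paper's: evenness of $A$ for the single mass, and convexity of $A$ (equivalently monotonicity of $a$) for the ordering $x_i'\ge x_{i+1}'$.

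For item 3, however, your route diverges from the paper's and leaves a real gap. You try to bound the closing speed $x_i'-x_{i+1}'$ of \emph{adjacent} clusters from below and then induct on $n$. As you yourself note, the uniformity of this bound is delicate: an interior particle feels competing pulls from both sides, and your appeal to ``strict convexity of $A$'' is not available globally, since $a$ is constant (equal to $\pm c\lambda$) outside $[-\alpha/c,\alpha/c]$ and hence $A$ is affine there. So the quantity $A(s+m/2)-A(s-m/2)$ need not be strictly increasing in $s$ on the whole range that arises, and your lower bound can degenerate.

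The paper avoids this difficulty by looking only at the two \emph{extreme} particles $x_1$ and $x_n$. For $x_1$ every other mass lies to the right, so the interaction term $s_1(t)=\sum_{j>1}\tfrac{m_j}{2}e^{x_1(t)-x_j(t)}$ is positive and, by item 2 (gaps shrink), nondecreasing in $t$. Convexity of $A$ then gives
\[
m_1x_1'(t)=A\!\big(\tfrac{m_1}{2}+s_1(t)\big)-A\!\big(-\tfrac{m_1}{2}+s_1(t)\big)
\;\ge\;A\!\big(\tfrac{m_1}{2}+s_1(0)\big)-A\!\big(-\tfrac{m_1}{2}+s_1(0)\big)>0,
\]
a fixed positive constant depending only on the initial data. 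Symmetrically $x_n'(t)$ is bounded above by a fixed negative constant. Hence $x_n(t)-x_1(t)$ decreases at a uniform positive rate and vanishes at some finite $T^*$; since all particles are sandwiched between $x_1$ and $x_n$, they all coalesce at that time into a single Dirac mass at some $c^*\in[x_1^0,x_n^0]$, stationary thereafter by item 1. No induction, no strict convexity, and no adjacent-gap analysis are needed.
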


\begin{proof}
The first point is a direct consequence of the even character of $A$ 
whereas the second point comes from the convexity of $A$.
Let us then prove the third point.
By convexity of the function $A$ and with \eqref{dynagg1}, we have
$$
m_1x'_1 \geq A \left(\frac{m_1}{2}+\sum_{j=2}^n \frac{m_j}{2} e^{x_1^0-x_j^0}\right)
-A \left(-\frac{m_1}{2}+\sum_{j=2}^n \frac{m_j}{2} e^{x_1^0-x_j^0}\right)>0,
$$
and
$$
m_nx'_n \leq A \left(-\sum_{j=1}^{n-1} \frac{m_j}{2} e^{x_j^0-x_n^0}+\frac{m_n}{2}\right)
-A \left(-\sum_{j=1}^{n-1} \frac{m_j}{2} e^{x_j^0-x_n^0}-\frac{m_n}{2}\right)<0.
$$
As for \eqref{dynagg2}, we can rewrite these last inequalities as~:
$$
x'_1(t) \geq a(\gamma_1(0))>0, \qquad x'_n\leq a(\gamma_n(0))<0.
$$
We deduce that there exists a time $T^*>0$ such that all masses collapse 
for $t=T^*$ in a single Dirac mass.
\end{proof}

\begin{remark}
Notice that we have in addition the following estimate for $T^*$:
$$
T^* < (x_n^0-x_1^0)/(a(\gamma_1(0))-a(\gamma_n(0))).
$$
\end{remark}

\begin{corollary}
Let us assume that $0\leq \rho^{ini}\in C_c(\RR)$ with compact support $[0,L]$.
Let us denote $\rho$ the duality solution of Theorem \ref{ExistFlux} 
with initial data $\rho^{ini}$.
Then there exists  $c^*\in [0,L]$ and $T^*>0$ such that 
$\rho(t,x)=\delta_{c^*}(x)$ for all $t>T^*$.
\end{corollary}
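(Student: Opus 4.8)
The plan is to reduce the case of a continuous initial datum $\rho^{ini}\in C_c(\RR)$ with support in $[0,L]$ to the already-established aggregate dynamics of Proposition \ref{propdynamic}, via an approximation-and-passage-to-the-limit argument. First I would approximate $\rho^{ini}$ by a sequence of finitely-supported atomic measures $\rho^{ini}_n=\sum_{i=1}^{k_n} m_i^{0,n}\delta_{x_i^{0,n}}$ with all atoms lying in $[0,L]$, all masses $m_i^{0,n}\ge 0$, and total mass equal to $|\rho^{ini}|(\RR)=\int_0^L\rho^{ini}(x)\,dx=:M$, chosen so that $\rho^{ini}_n\rightharpoonup\rho^{ini}$ in $\calM_b(\RR)$. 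A convenient construction is a uniform partition of $[0,L]$ into $k_n$ cells, placing at the cell center an atom whose mass is the $\rho^{ini}$-measure of that cell. By Proposition \ref{propdynamic}, each $\rho_n$ collapses to a single Dirac mass $M\,\delta_{c_n^*}$ at some finite time $T_n^*$, with $c_n^*\in[x_1^{0,n},x_{k_n}^{0,n}]\subset[0,L]$.

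The crux is then a uniform bound on the collapse time $T_n^*$, independent of $n$. This is exactly where the \textbf{main obstacle} lies: the estimate from the remark after Proposition \ref{propdynamic}, namely $T_n^*<(x_{k_n}^{0,n}-x_1^{0,n})/(a(\gamma_1^n(0))-a(\gamma_{k_n}^n(0)))$, controls the numerator by $L$ but requires a uniform \emph{lower} bound on the denominator $a(\gamma_1^n(0))-a(\gamma_{k_n}^n(0))>0$. I would extract such a bound from the explicit inner arguments in the inequalities of the proof of Proposition \ref{propdynamic}: the leftmost atom satisfies $\gamma_1^n(0)\ge \frac{m_1^n}{2}+\sum_{j\ge2}\frac{m_j^n}{2}e^{x_1^{0,n}-x_j^{0,n}}>0$ and, since the atoms lie in $[0,L]$, the exponentials are bounded below by $e^{-L}$, giving $\gamma_1^n(0)\ge \frac{M}{2}e^{-L}$; symmetrically $\gamma_{k_n}^n(0)\le-\frac{M}{2}e^{-L}$. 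Because $a$ is odd and nondecreasing, this yields the uniform bound $a(\gamma_1^n(0))-a(\gamma_{k_n}^n(0))\ge 2\,a\!\left(\tfrac{M}{2}e^{-L}\right)>0$, whence $T_n^*\le L/\bigl(2\,a(\tfrac{M}{2}e^{-L})\bigr)=:T^*$ uniformly in $n$.

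Having fixed a uniform $T^*$, I would pass to the limit. By construction $\rho_n^{ini}\rightharpoonup\rho^{ini}$, and by the existence theory (the stability statement of Theorem \ref{ExistDuality}, item 4, together with Lemma \ref{aOSL} and Lemma \ref{convpaS}) the duality solutions $\rho_n$ converge in $\smes$ to the unique duality solution $\rho$ of Theorem \ref{ExistFlux} with datum $\rho^{ini}$. For each $t>T^*$ we have $\rho_n(t,\cdot)=M\,\delta_{c_n^*}$ with $c_n^*\in[0,L]$; extracting a convergent subsequence $c_n^*\to c^*\in[0,L]$ gives $\rho_n(t,\cdot)\rightharpoonup M\,\delta_{c^*}$. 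By uniqueness of the weak limit, $\rho(t,\cdot)=M\,\delta_{c^*}$ for every $t>T^*$. A final point to check is that the limiting center $c^*$ is the same for all $t>T^*$: this follows because once a single aggregate of mass $M$ is formed it does not move (item 1 of Proposition \ref{propdynamic}, the even character of $A$), so $c_n^*$ is constant in $t$ on $(T_n^*,T]$, and the limit $c^*$ inherits this time-independence. This completes the reduction and establishes the corollary.
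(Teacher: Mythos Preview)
Your overall strategy coincides with the paper's: approximate $\rho^{ini}$ by atomic measures supported in $[0,L]$, invoke Proposition~\ref{propdynamic} for each approximant, obtain a uniform bound on $T_n^*$, and pass to the limit via the stability of duality solutions. The only substantive difference is in how you justify the uniform lower bound on the denominator $a(\gamma_1^n(0))-a(\gamma_{k_n}^n(0))$, and there your argument has a slip.

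The inequality you write, $\gamma_1^n(0)\ge \tfrac{m_1^n}{2}+\sum_{j\ge2}\tfrac{m_j^n}{2}e^{x_1^{0,n}-x_j^{0,n}}$, is the \emph{upper} endpoint of the interval in \eqref{dynagg2}; the valid lower bound carries the sign $-\tfrac{m_1^n}{2}$ on the first term (compare \eqref{boundg1}). With the correct sign, bounding the exponentials by $e^{-L}$ only yields $\gamma_1^n(0)\ge -\tfrac{m_1^n}{2}+\tfrac{M-m_1^n}{2}e^{-L}$, which does not give $\ge\tfrac{M}{2}e^{-L}$ when $m_1^n>0$. The paper handles this by exploiting the continuity hypothesis: since $\rho^{ini}\in C_c(\RR)$ with support exactly $[0,L]$, one has $\rho^{ini}(0)=\rho^{ini}(L)=0$, so on a uniform partition $m_1^n\to 0$ and $m_{k_n}^n\to 0$; the two endpoints of the interval for $\gamma_1^n(0)$ then squeeze to the common positive limit $\tfrac{1}{2}\int_0^L\rho^{ini}(x)e^{-x}\,dx$, and symmetrically for $\gamma_{k_n}^n(0)$, from which boundedness of $(T_n^*)_n$ follows. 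Your argument is easily repaired along the same lines (for $n$ large the offending $-m_1^n/2$ term becomes negligible), but as written the uniform positive lower bound on the denominator is not established.
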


\begin{proof}
Let us approximate $\rho^{ini}$ by 
$$
\rho^{ini}_n(x)=\sum_{i=1}^n m_i^0 \delta_{x_i^0}(x),
$$
with $x_i^0=(i-1)L/n$, for $i=1,\dots,n$ and 
$m_i^0=\int_{x_i^0}^{x_{i+1}^0} \rho^{ini}(dx)$.
From Proposition \ref{propdynamic}, we deduce that there exists
$c_n^*\in [0,L]$ and $T_n^*>0$ such that the duality solution
of Theorem \ref{ExistFlux} with initial data $\rho_n^{ini}$ 
is such that $\rho_n(t,x)=\delta_{c^*_n}$ for all $t>T_n^*$.
Moreover, we have $T_n^* < L/(a(\gamma^n_1(0))-a(\gamma^n_n(0)))$
where we recall that
\beq\label{boundg1}
-m_1^0 + \sum_{j=1}^n \frac{m_j^0}{2} e^{-(j-1)L/n} < \gamma_1^n(0) <
\sum_{j=1}^n \frac{m_j^0}{2} e^{-(j-1)L/n},
\eeq
and 
\beq\label{boundgn}
-\sum_{j=1}^n \frac{m_j^0}{2} e^{(j-n)L/n} < \gamma_n^n(0) <
m_n^0-\sum_{j=1}^n \frac{m_j^0}{2} e^{(j-n)L/n}.
\eeq
By stability results on duality solutions in Theorem \ref{ExistDuality}
(see also subsection \ref{aggregat}), 
we deduce that $\rho_n\rightharpoonup \rho$ in $\smes$ as $n\to +\infty$.
Taking the limit in \eqref{boundg1} and \eqref{boundgn}, we deduce 
by continuity of $\rho^{ini}$ that
$$
\lim_{n\to +\infty} \gamma_1^n(0) = \int_0^L \rho^{ini}(x) e^{-x}\,dx
$$
and
$$
\lim_{n\to +\infty} \gamma_n^n(0) = -\int_0^L \rho^{ini}(x) e^{-L+x}\,dx.
$$
Moreover, since $\rho^{ini}$ is continuous with compact support in $[0,L]$
we have $\rho^{ini}(0)=\rho^{ini}(L)=0$.
We deduce that the sequence $(T^*_n)_{n\in \NN^*}$ is bounded.
Thus there exists a time $T^*$ independent of $n$ such that
$\rho_n(t)=\delta_{c_n^*}$ for all $t>T^*$. Taking the limit when 
$n\to +\infty$, we conclude that 
there exists $c\in [0,L]$ such that $\rho(t)=\delta_c$ for all $t>T^*$.
\end{proof}

\begin{remark}\label{rq:a=id}
Taking $a=Id$, therefore $A(x)=x^2/2$, we deduce from \eqref{dynagg1} that 
$$
x'_i = \sum_{j\neq i} m_j \pa_xK(x_j-x_i).
$$
We recover the dynamics of the aggregation equation as noticed by 
Carrillo et al. in \cite{Carrillo}. These authors prove in particular
the concentration in finite time of the total mass in the center
of mass. In the framework of the present work, which is focused on applications 
to chemotaxis, $a$ is not assumed to be the identity function, so that
 the center of mass is not conserved. A numerical evidence of this 
phenomenon will be proposed in the last subsection of this paper.
\end{remark}

\subsection{Discretization}

The numerical resolution of system \eqref{eqrhohydro}--\eqref{eqShydro}
is far from obvious. A first naive idea consists in applying a standard splitting method where we treat
separately the scalar conservation law \eqref{eqrhohydro} and 
the elliptic equation \eqref{eqShydro}.  It turns out that such a scheme is unable to 
recover the correct definition of the flux and therefore of the product $a(\pa_xS)$ by $\rho$. In particular,
it leads to stationary Dirac masses.

A second idea consists in solving the distributional conservation law
\eqref{eqrhodis} by a finite volume method. 
It involves a discretization of the flux $J$ on the interface 
of each cell of the mesh, and thus one could expect a correct computation of the flux, and
therefore a convenient interpretation of the product.
However, this definition of the flux involves the calculation of two
derivatives of $S$. Using a centered scheme to discretize this quantity
induces spurious oscillations as it is usually noticed for centered 
scheme on scalar conservation laws. We can then upwind the scheme depending
on the sign of $a(\pa_xS)$ computed at previous iteration. 
But in doing so, we actually specify a value for $a(\pa_xS)$ in the definition 
of the product $a(\pa_xS)$ with $\rho$, and this can lead to capture wrong solutions.

Next, one can think of solving the equation \eqref{eqSdistrib} on $S$, motivated by the fact that it
plays a key part in the uniqueness, and that $\rho$ can be recovered readily from $S$. 
However the equation is non local and its numerical resolution appears to be 
quite complicated and with a high computational cost (even in the one dimensional setting). 

Thus we prefer to use a method based on 
the dynamics of aggregates, detailed in Section \ref{aggregat}.
We use the principle of a particle method
in which we approximate the density by a sum of Dirac masses.
Then the motion of these pseudo-particles is approximated by
discretizing system \eqref{dynagg} with an explicit Euler scheme.
More precisely, let us assume that we have an approximation 
of $\rho$ at time $t_n=n\Delta t$, given by
	\beq\label{rhodiscret}
\rho^n(x)=\sum_{i=1}^{I^n} m_i^n \,\delta_{y_i^n}(x),
	\eeq
where $m_i^n>0$ is the mass allocated to the pseudo-particle at the position 
$y_i^n$ with $y_1^n<y_2^n<\dots<y_{I^n}^n$ for $I^n\in \NN^*$.
Then an approximation of the potential at time $t^n$ is given by
	$$
S^n(x) = \sum_{i=1}^{I^n} m_i^n \,e^{-|x-y_i^n|}.
	$$
Using an explicit Euler scheme, we compute the new position
	\begin{align*}
y_i^{n+1}= & y_i^n + \frac{\Delta t}{m_i^n}
A\left(-\sum_{j=1}^{i-1} \frac{m_j^n}{2} e^{y_j^n-y^n_i}
+\frac{m_i^n}{2}+\sum_{j=i+1}^{I^n} \frac{m_j^n}{2} e^{y^n_i-y^n_j}\right) \\
 & - \frac{\Delta t}{m_i^n} A\left(-\sum_{j=1}^{i-1} \frac{m_j^n}{2} e^{y^n_j-y^n_i}-
\frac{m_i^n}{2}+\sum_{j=i+1}^{I^n} \frac{m_j^n}{2} e^{y^n_i-y^n_j}\right).
	\end{align*}
Next, we test if some pseudo-particles have collided during the time step
$\Delta t$. If $y_{j+1}^{n+1}\leq y_j^{n+1}$ for $j\geq 1$, then the
pseudo-particles $j$ and $j+1$ have collapsed and form a unique
pseudo-particle which has the mass $m_j^n+m_{j+1}^n$. 
In this case, we decide to set this pseudo-particle at the position
$\frac 12 (y_{j+1}^{n+1}+ y_j^{n+1})$ and set $m_j^{n+1}=m_j^n+m_{j+1}^n$,
moreover we have therefore $I^{n+1}=I^n-1$.
Finally, for given initial sequences $(y^0_i)_{i=1,\dots,I^0}$ and 
$(m^0_i)_{i=1\dots,I^0}$ of size $I^0$, 
we can construct $(y^n_i)$ and $(m^n_i)$ of size $I^n$ as above.

Using well-known result on the convergence of Euler scheme, we 
deduce that, for given initial data $(y^0_i)_{i=1,\dots,I^0}$, 
$(m^0_i)_{i=1\dots,I^0}$ and $I^0$, $y_i^n$ defined above 
converges to the solution $x_i(t)$ of \eqref{dynagg} 
when $\Delta t$ tends to $0$ such that $t_n\to t$.
Using the convergence result in Section \ref{sub:existDual}, we deduce that
the function $\rho^n$ in \eqref{rhodiscret} converges in $\smes$
to the unique duality solution of Theorem \ref{ExistFlux}.
Then the method introduced above is convergent provided we 
discretize the initial data $\rho^{ini}$ in such a way that
$\rho^0(x):=\sum_{i=1}^{I^0} m_i^0 \,\delta_{y_i^0}(x)$ 
converges in $\calM_b$ to $\rho^{ini}$. 
Moreover, we verify easily that we have 
	$$
\sum_{i=1}^{I^0} m_i^0 = \sum_{i=1}^{I^n} m_i^n,\quad\mbox{ and }\ 
I^n\leq I^0,\ \mbox{ for all } n\in \NN,
	$$
and that the approximation $\rho^n$ of $\rho(t_n)$ is nonnegative.

\subsection{Numerical results}
In this Section, we present numerical simulations of model 
\eqref{eqrhohydro}--\eqref{eqShydro} using the algorithm
described above. 
We first approximate the initial data $\rho^{ini}\geq 0$, 
which is assumed to be compactly supported for numerical purpose,
in the following way: we introduce a discretization 
$x_j=x_0+j\Delta x$ of the bounded domain which includes the compact support
of $\rho^{ini}$ and we define
$$
m_i^0=\int_{x_i-\frac{\Delta x}{2}}^{x_i+\frac{\Delta x}{2}}
\rho^{ini}(x)\,dx.
$$
Then the sequence $(y_j^0)_j$ is defined by the nodes $(x_i)$ for 
which $m_i^0$ is not zero, and $I^0$ correspond to the number of 
$i\in \NN$ such that $m_i^0$ is not zero. We construct then 
the approximation of $\rho^{ini}$ by
$$
\rho^0(x):=\sum_{i=1}^{I^0} m_i^0 \,\delta_{y_i^0}(x).
$$ 

\begin{figure}[!ht]
 \includegraphics[width=5.5cm,height=6cm]{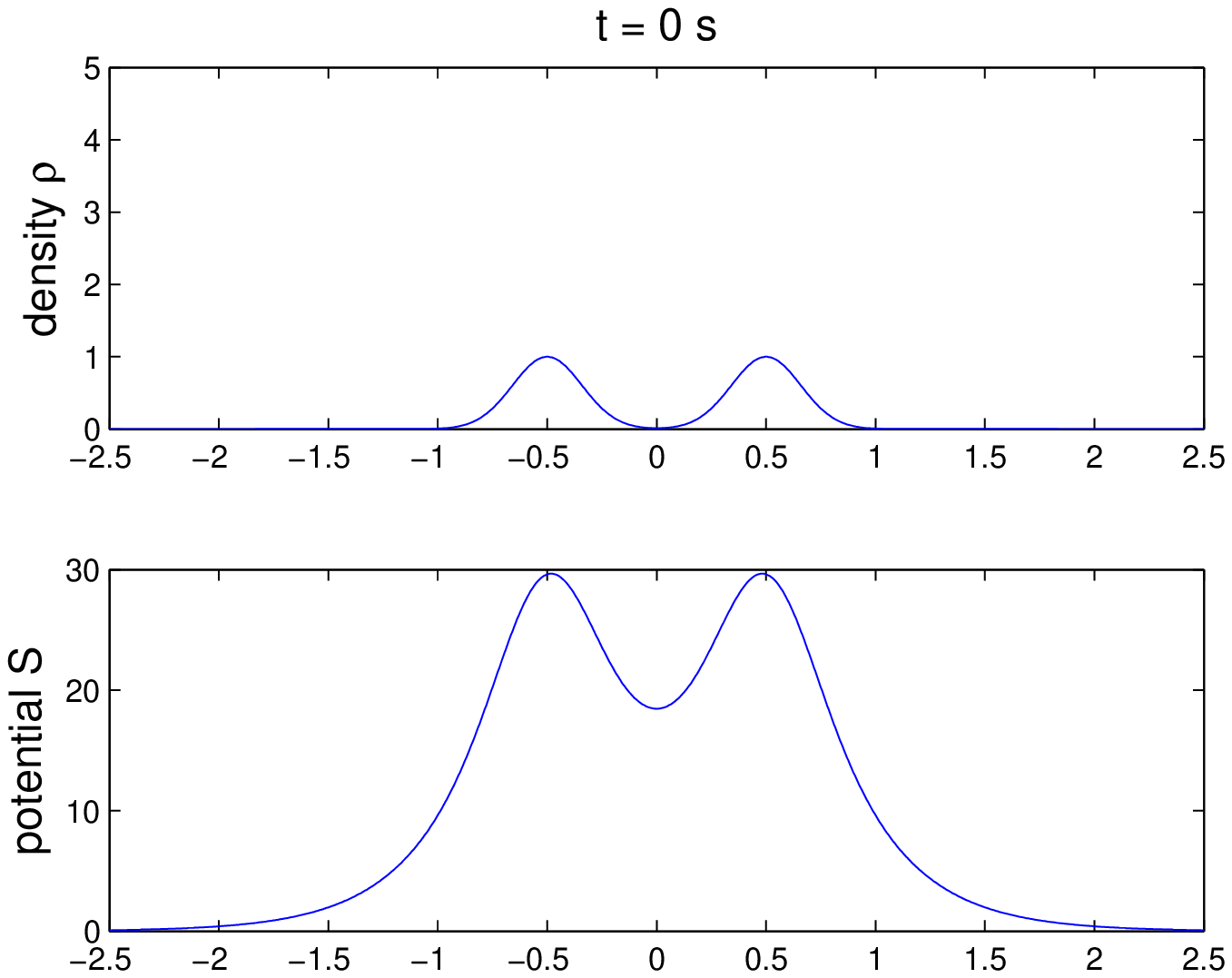}
 \includegraphics[width=5.5cm,height=6cm]{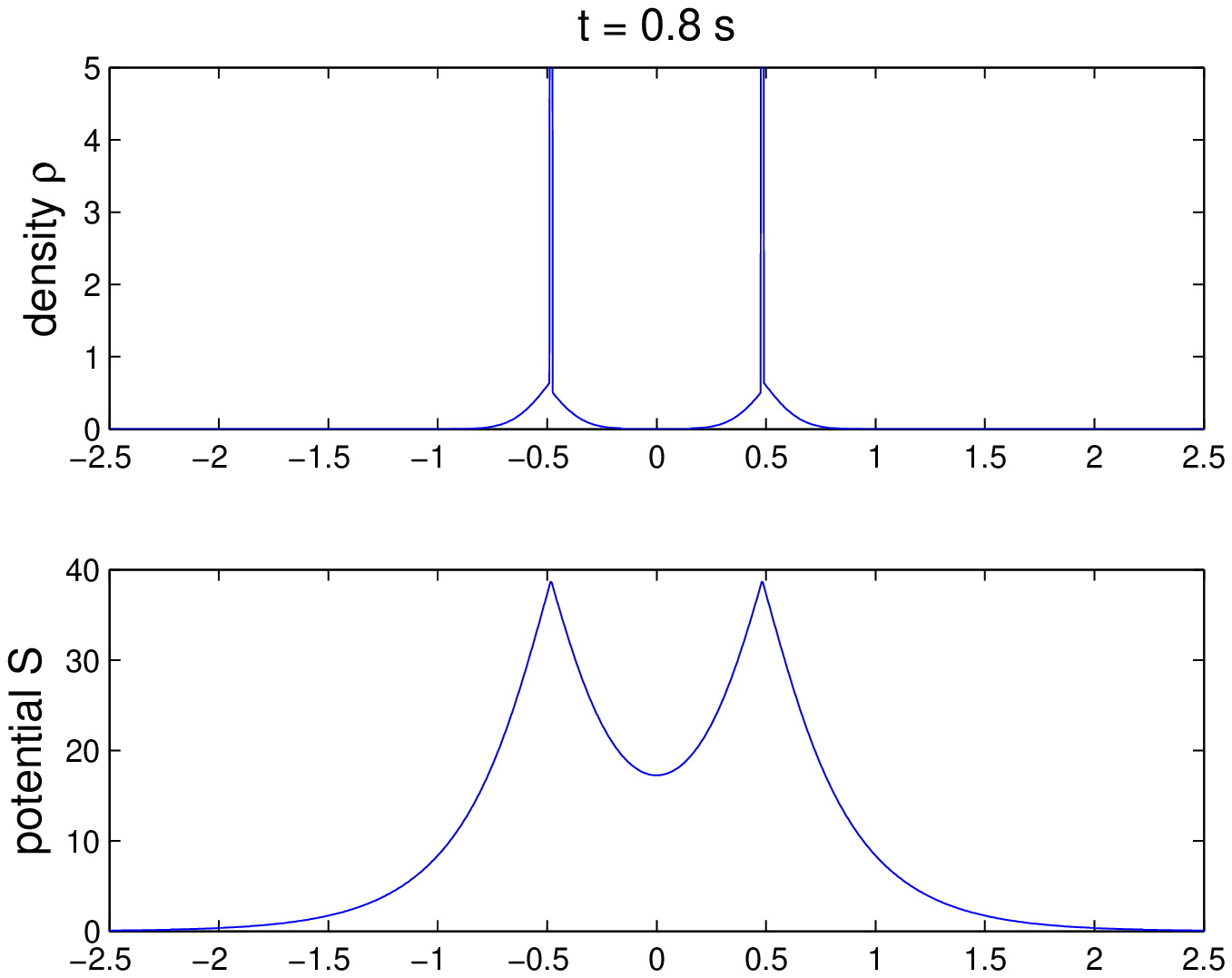}
 \includegraphics[width=5.5cm,height=6cm]{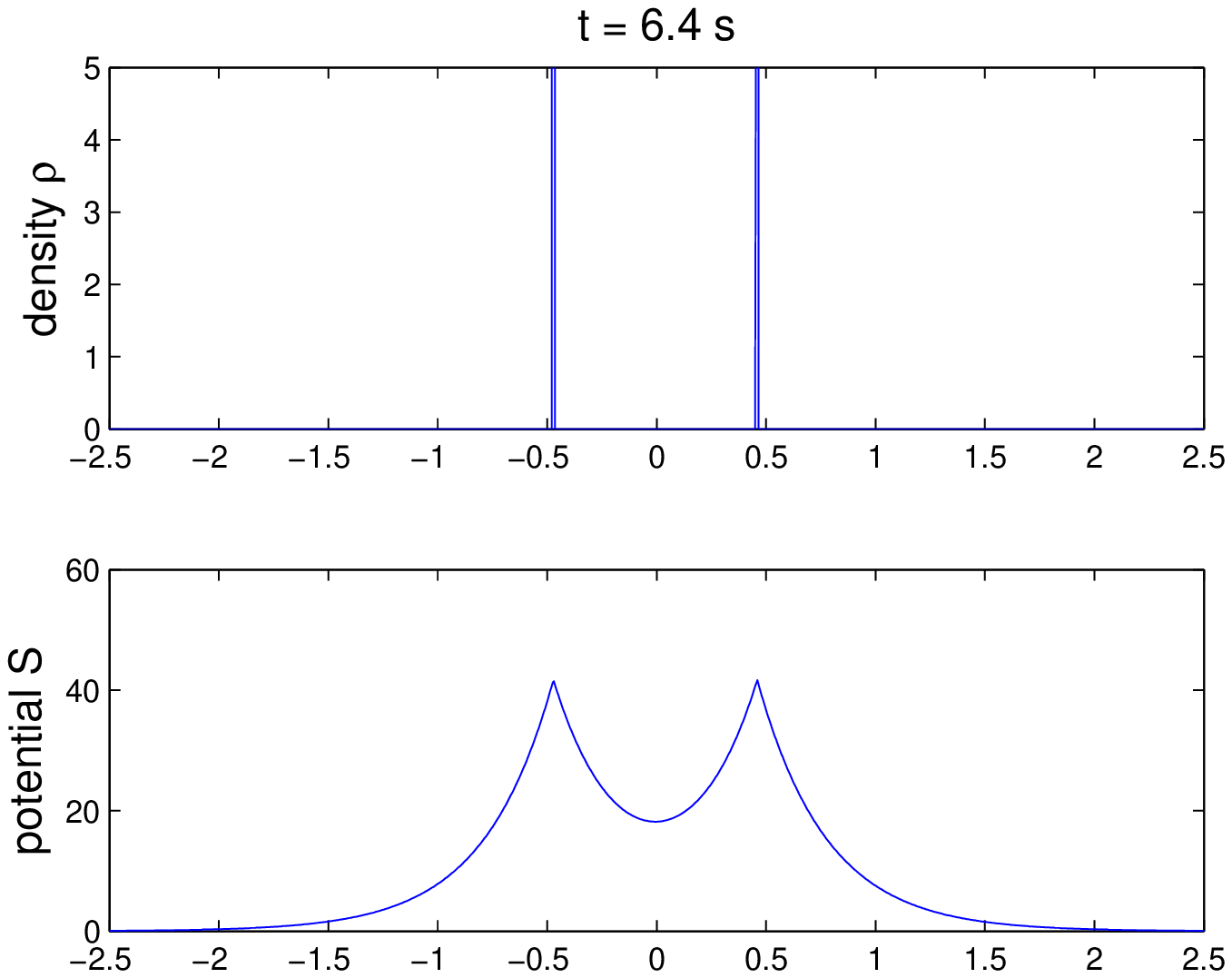}\\[3mm]
 \includegraphics[width=5.5cm,height=6cm]{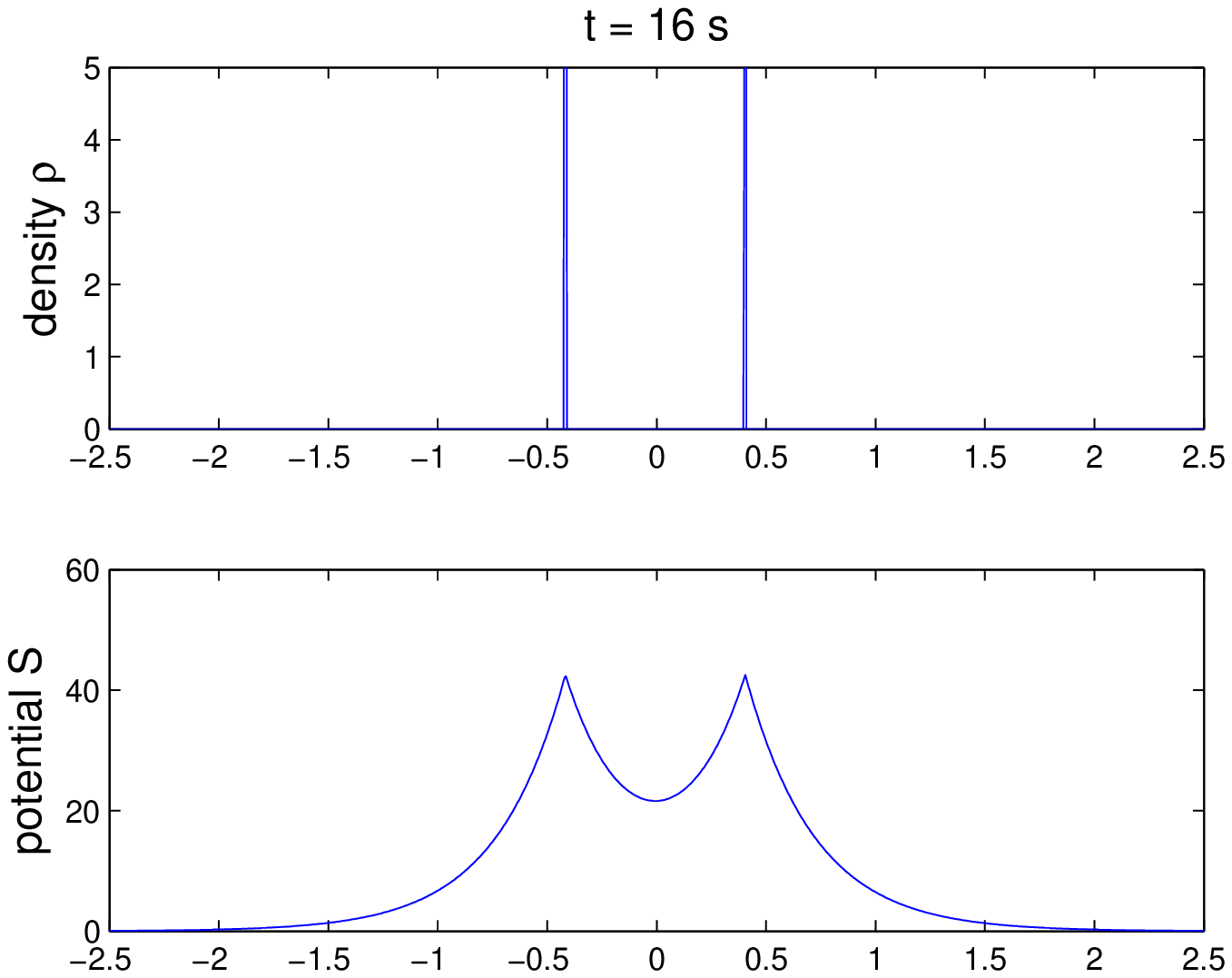}
 \includegraphics[width=5.5cm,height=6cm]{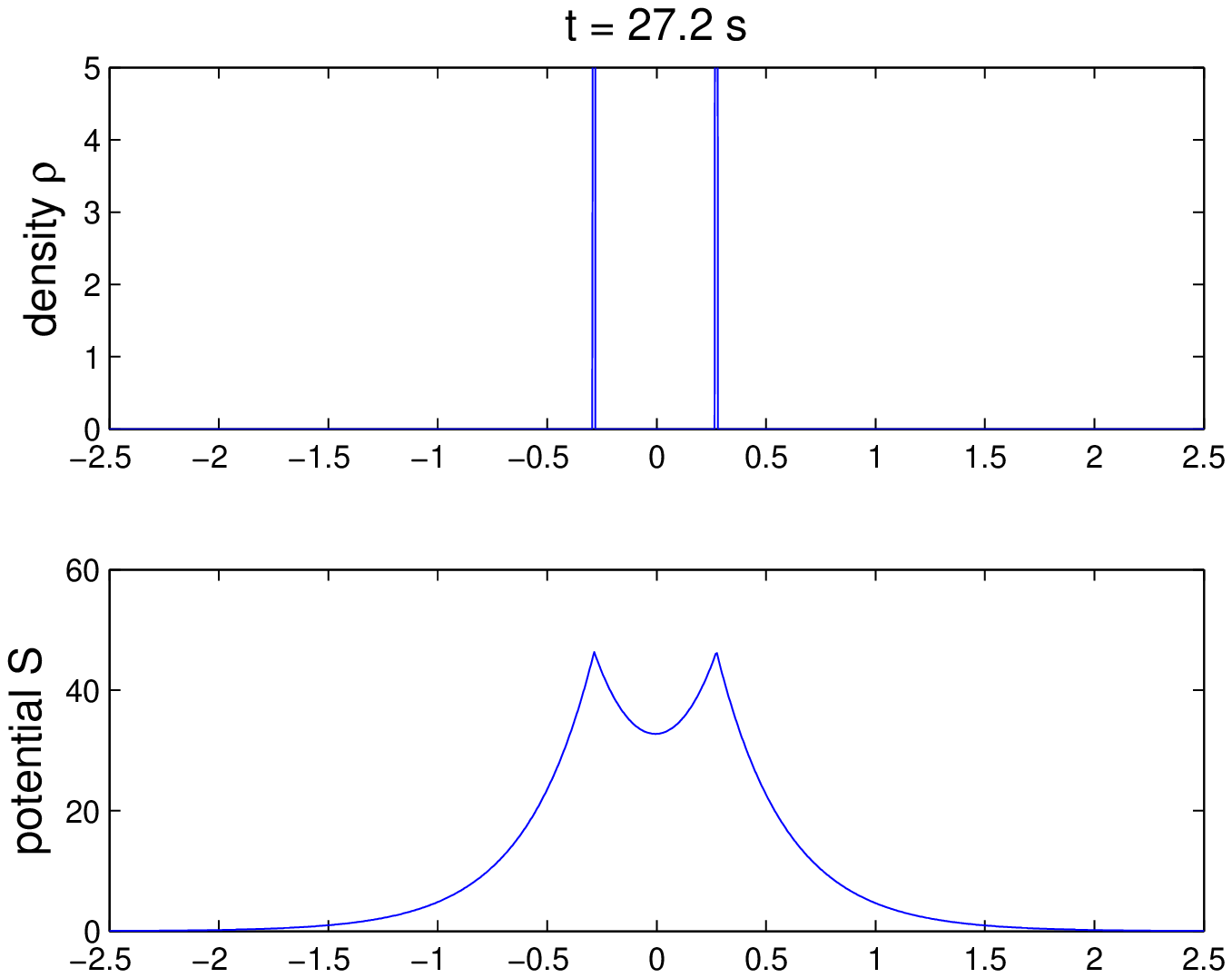}
 \includegraphics[width=5.5cm,height=6cm]{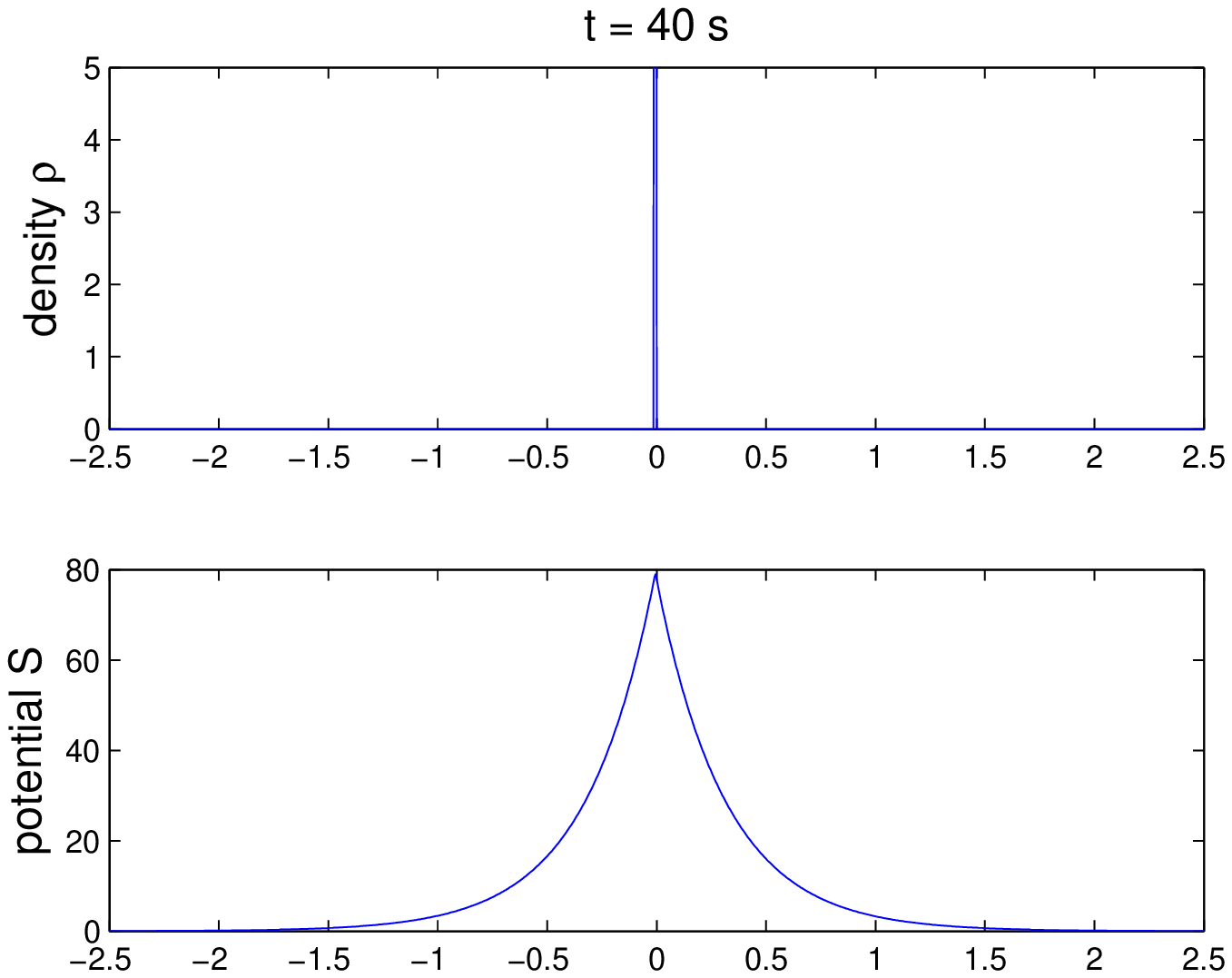}
\caption{Dynamics of the density $\rho$ (top) and of the potential $S$ (bottom)
for an initial density given by the sum of two Gaussian.}
\label{2bump}
\end{figure}
We present in Figure \ref{2bump} the dynamics of the density $\rho$ 
and of the chemoattractant concentration $S$
for an initial data $\rho^{ini}$ given by the sum of two 
Gaussian functions, more precisely
$$
\rho^{ini}(x)=e^{-20(x-0.5)^2}+e^{-20(x+0.5)^2}.
$$
As expected, we first observe the formation of two Dirac masses
at the position where $\pa_xS$ initially vanishes. Then, 
the two aggregates collapse in the center.
Looking at the time evolution, we notice that the first step of formation of
aggregates is fast compared to the time of collapse.

\begin{figure}[!ht]
\begin{center}
 \includegraphics[width=5.5cm,height=6cm]{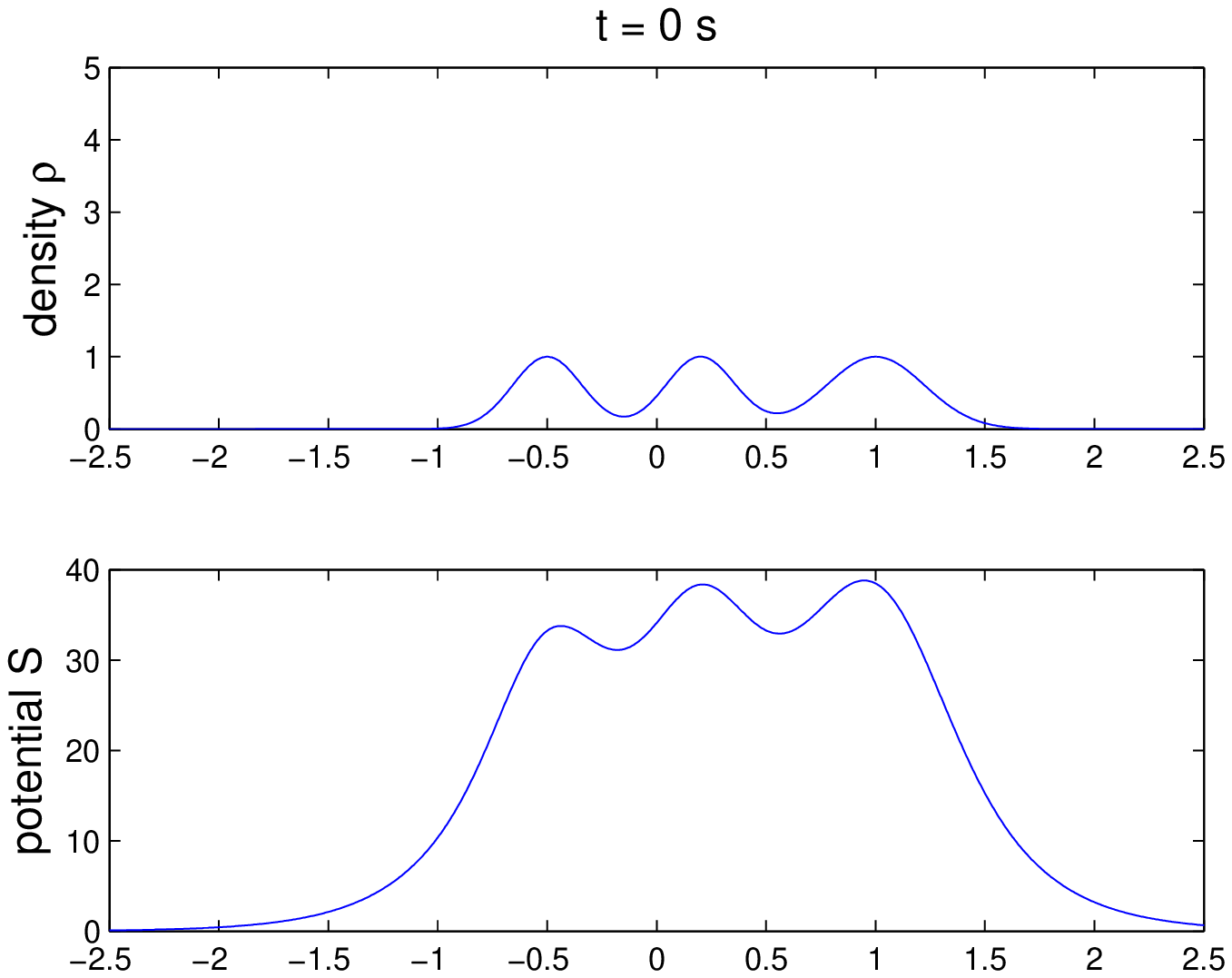}
 \includegraphics[width=5.5cm,height=6cm]{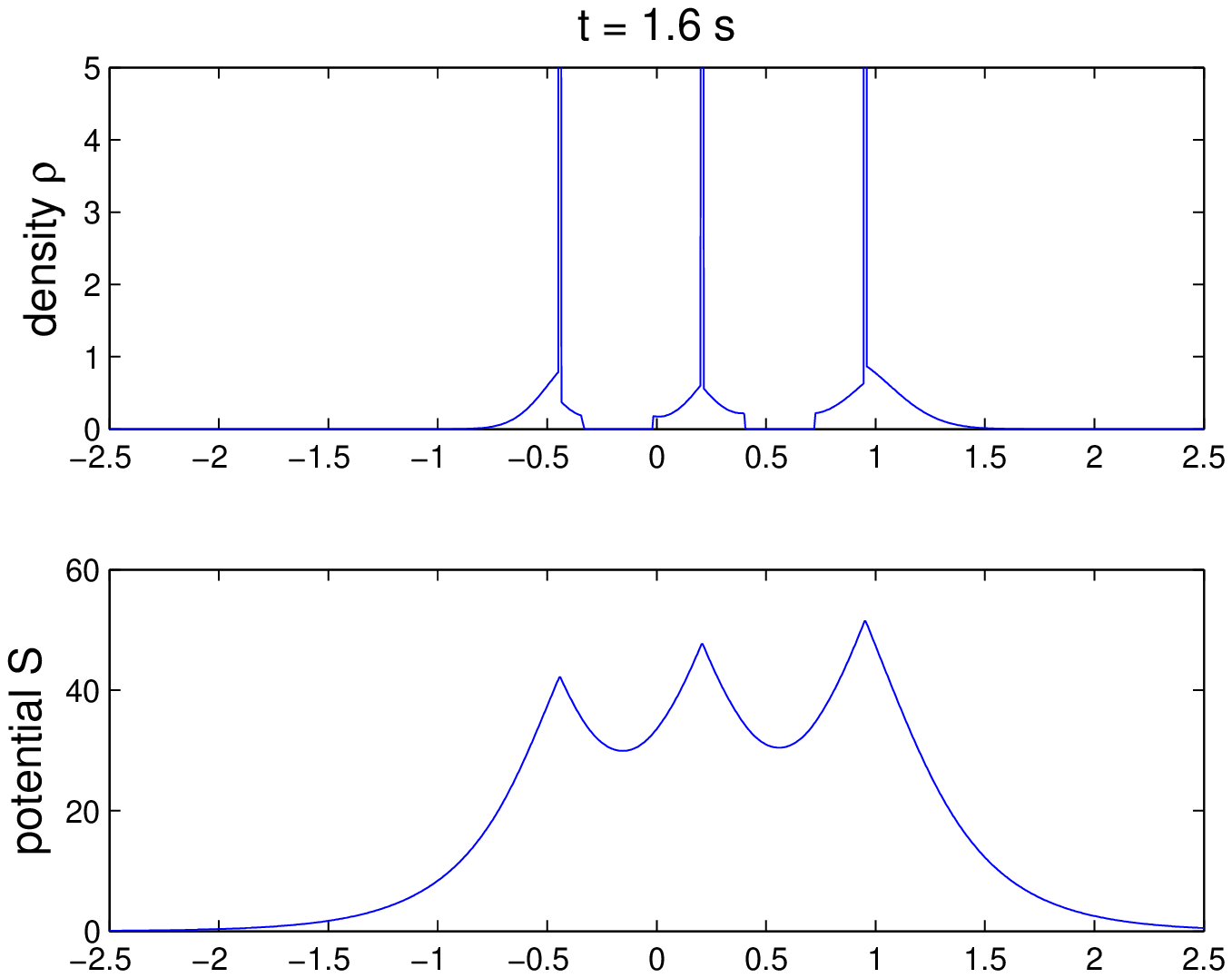}
 \includegraphics[width=5.5cm,height=6cm]{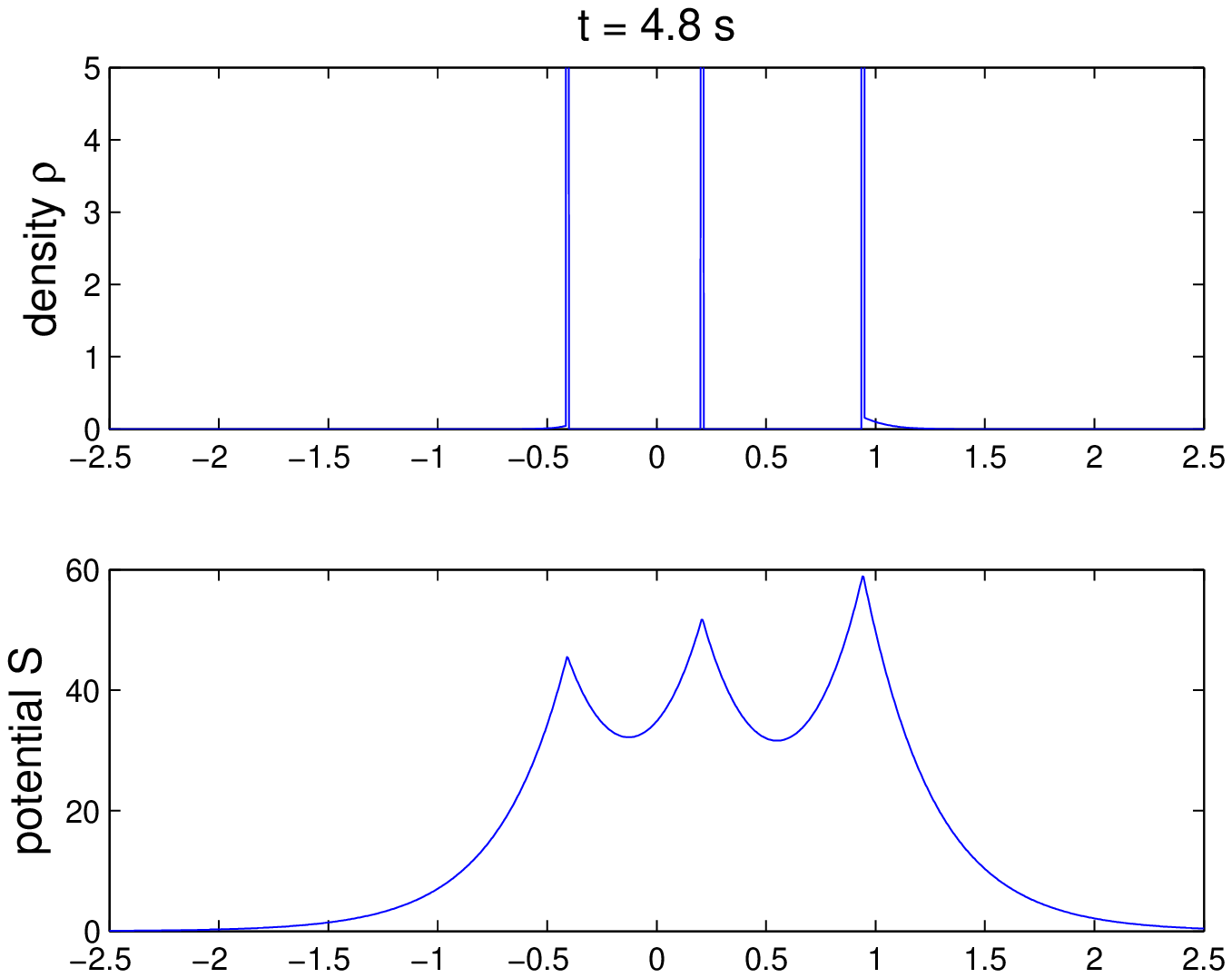}\\[3mm]
 \includegraphics[width=5.5cm,height=6cm]{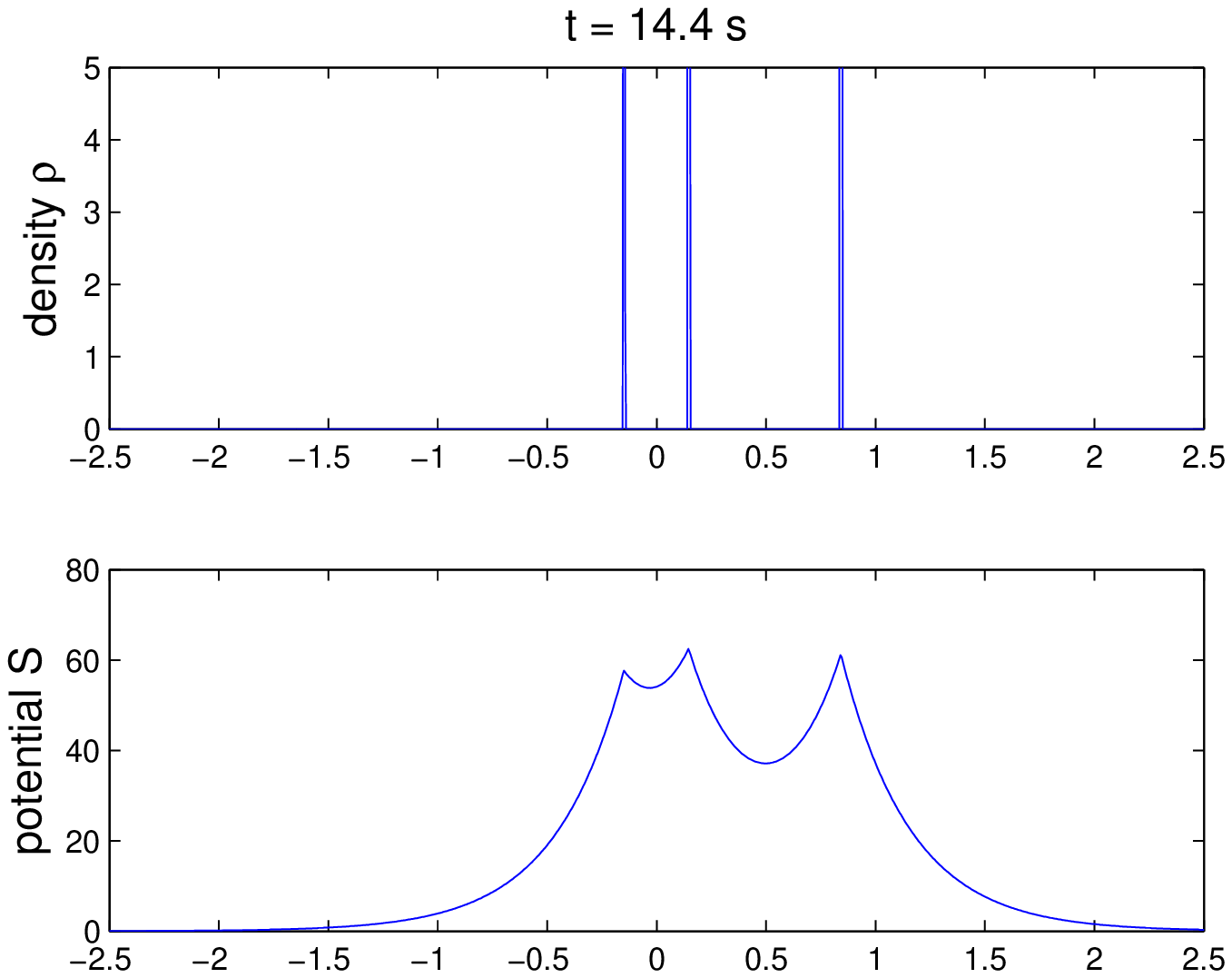}
 \includegraphics[width=5.5cm,height=6cm]{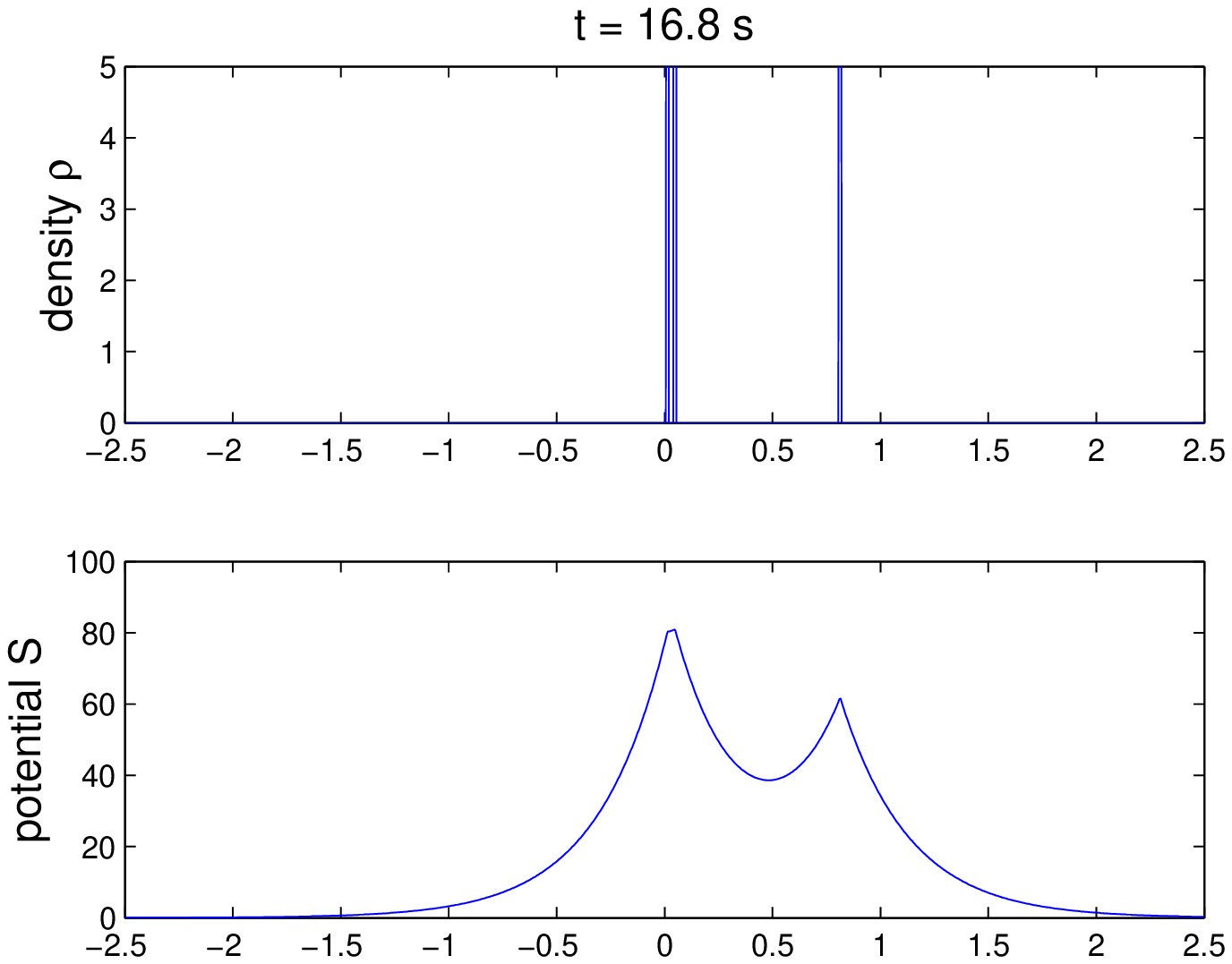}
 \includegraphics[width=5.5cm,height=6cm]{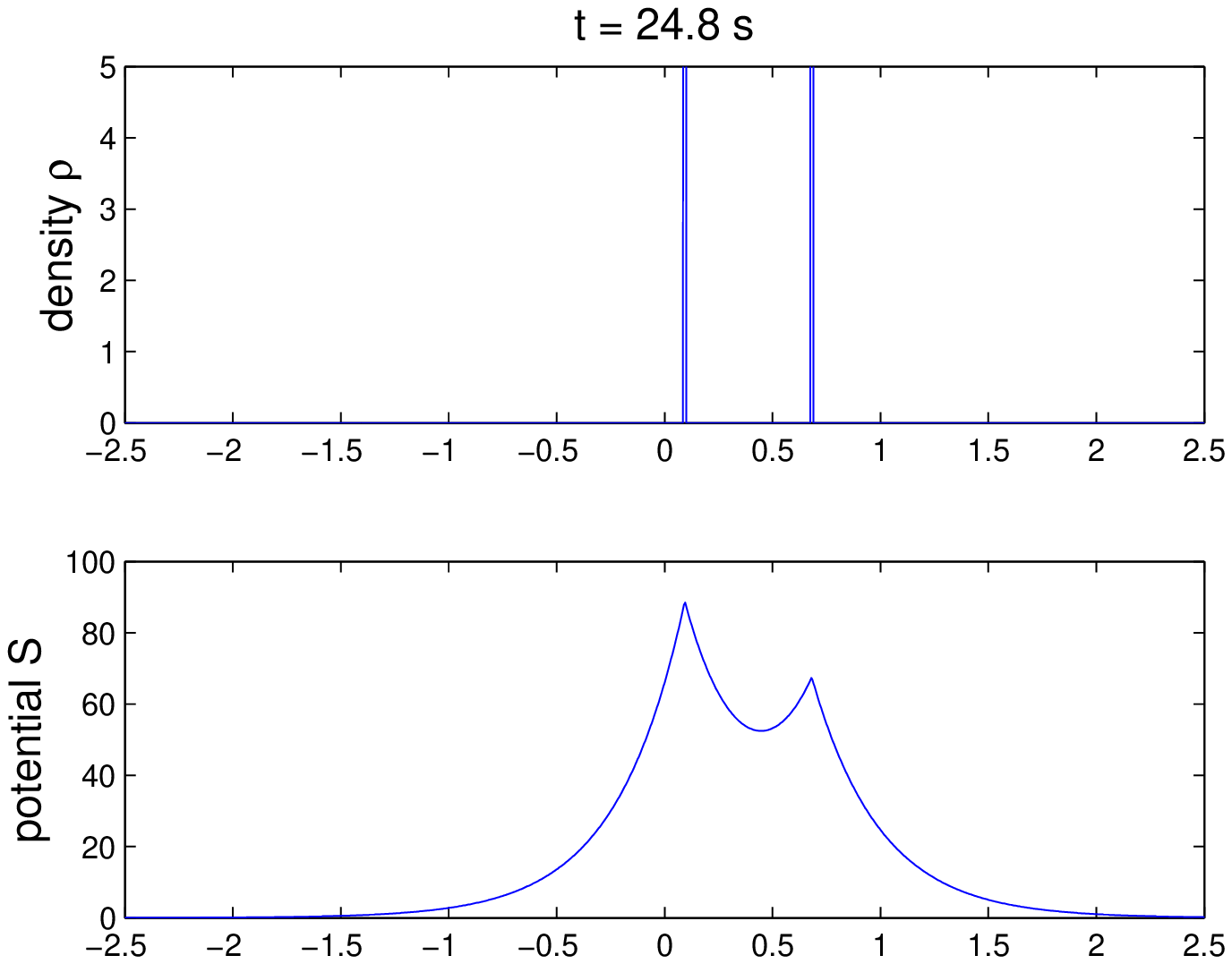}\\[3mm]
 \includegraphics[width=5.5cm,height=6cm]{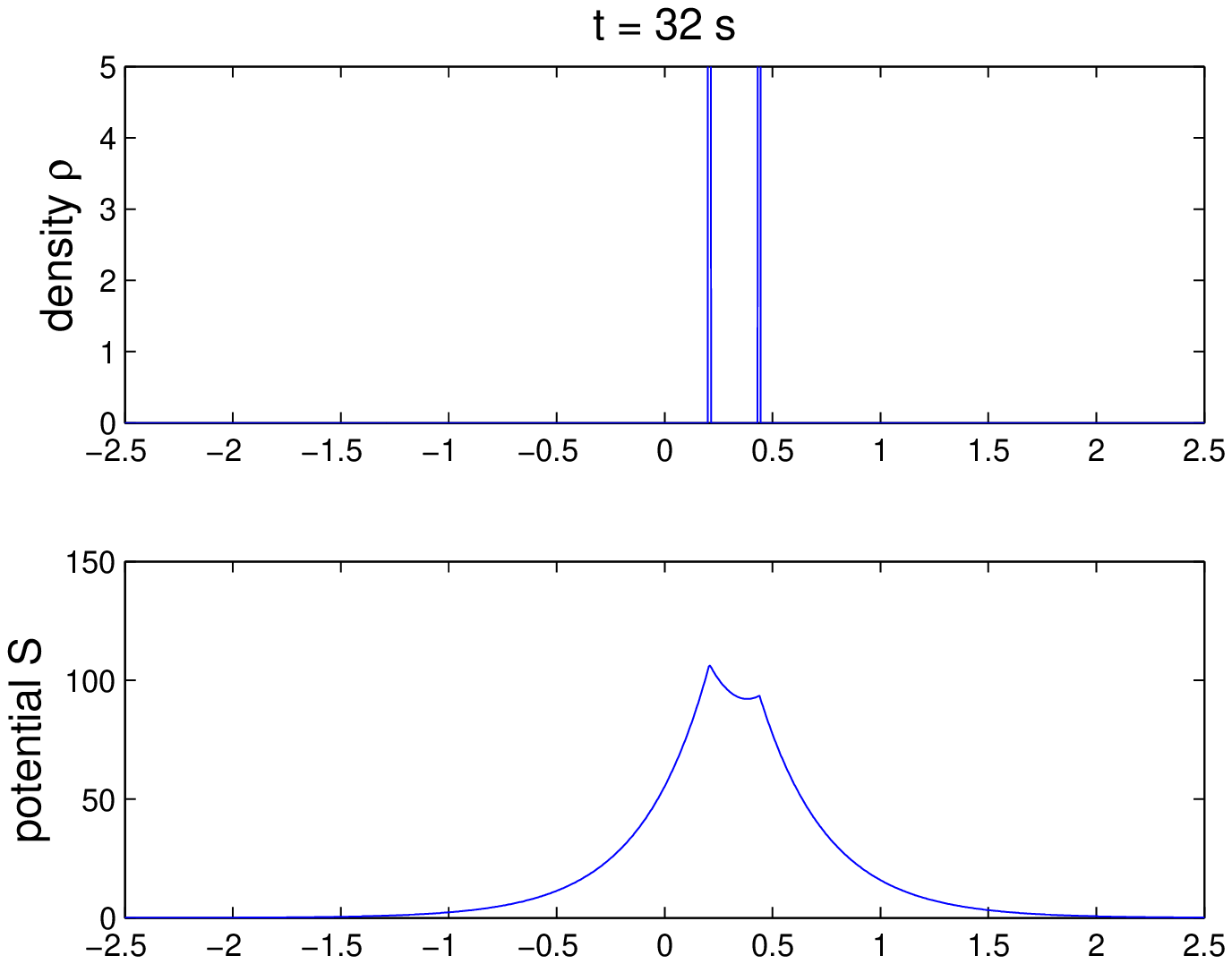}
 \includegraphics[width=5.5cm,height=6cm]{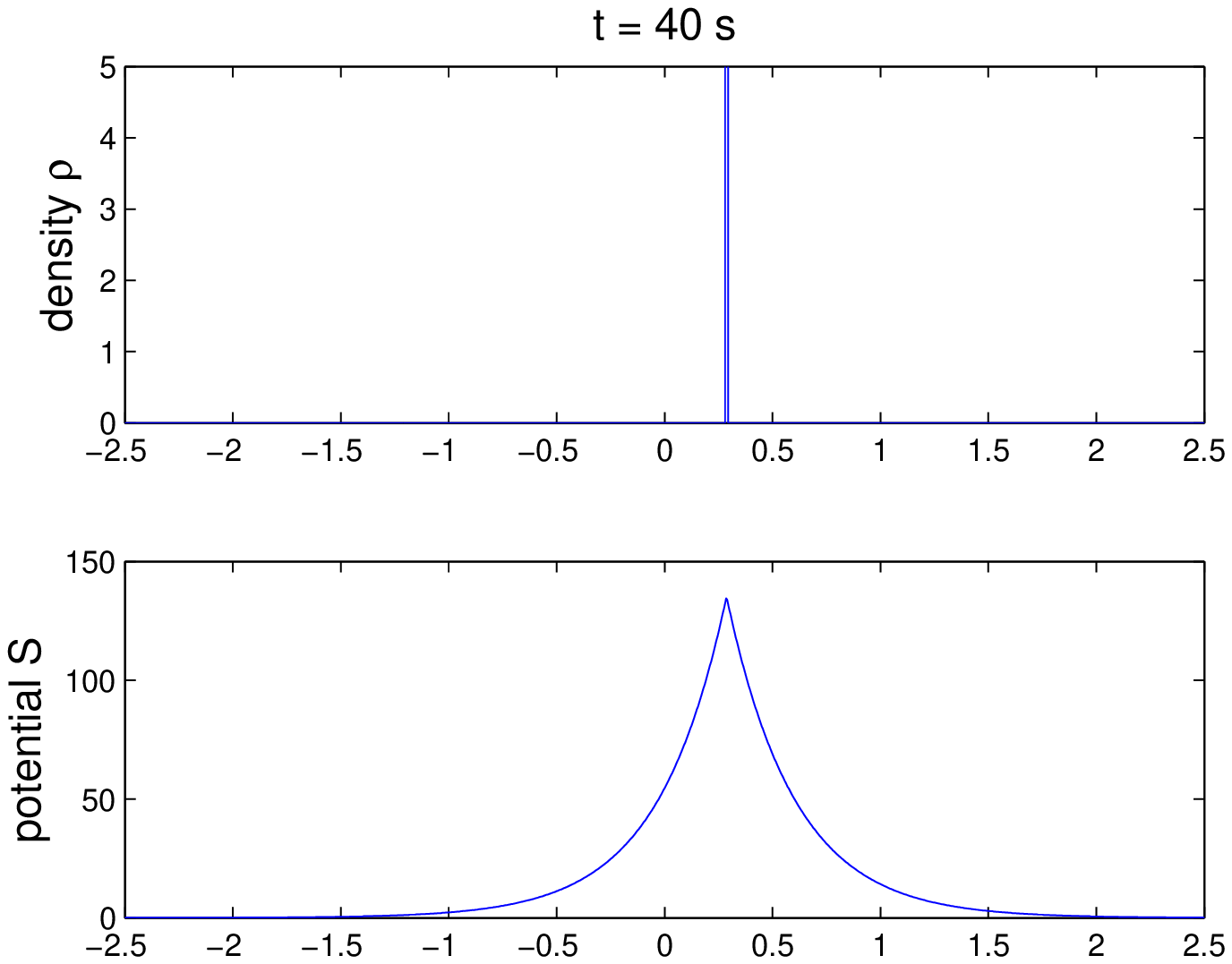}
\end{center}
\caption{Dynamics of the density $\rho$ (top) and of the potential $S$ (bottom)
for an initial density given by the sum of three Gaussian.}
\label{3bump}
\end{figure}
In Figure \ref{3bump} we display the dynamics for an initial data given 
by the sum of three Gaussian functions:
	$$
\rho^{ini}(x)=e^{-10(x-1)^2}+e^{-20(x-0.2)^2}+e^{-20(x+0.5)^2}.
	$$
We observe the formation of three Dirac masses that moves according to 
the dynamical system \eqref{dynagg1}. They collapse then in finite
time.

\begin{figure}[!ht]
 \includegraphics[width=5.5cm,height=6cm]{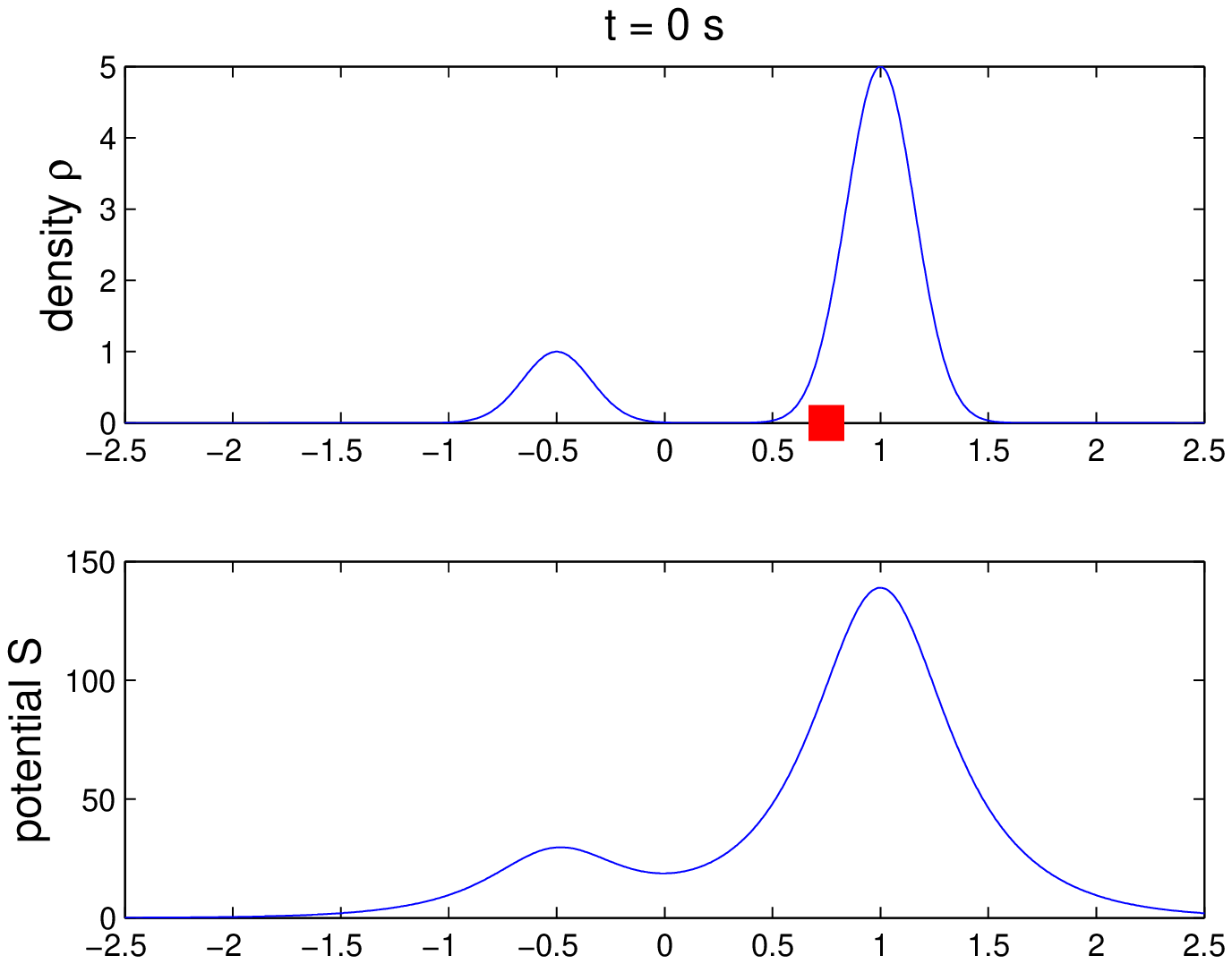}
 \includegraphics[width=5.5cm,height=6cm]{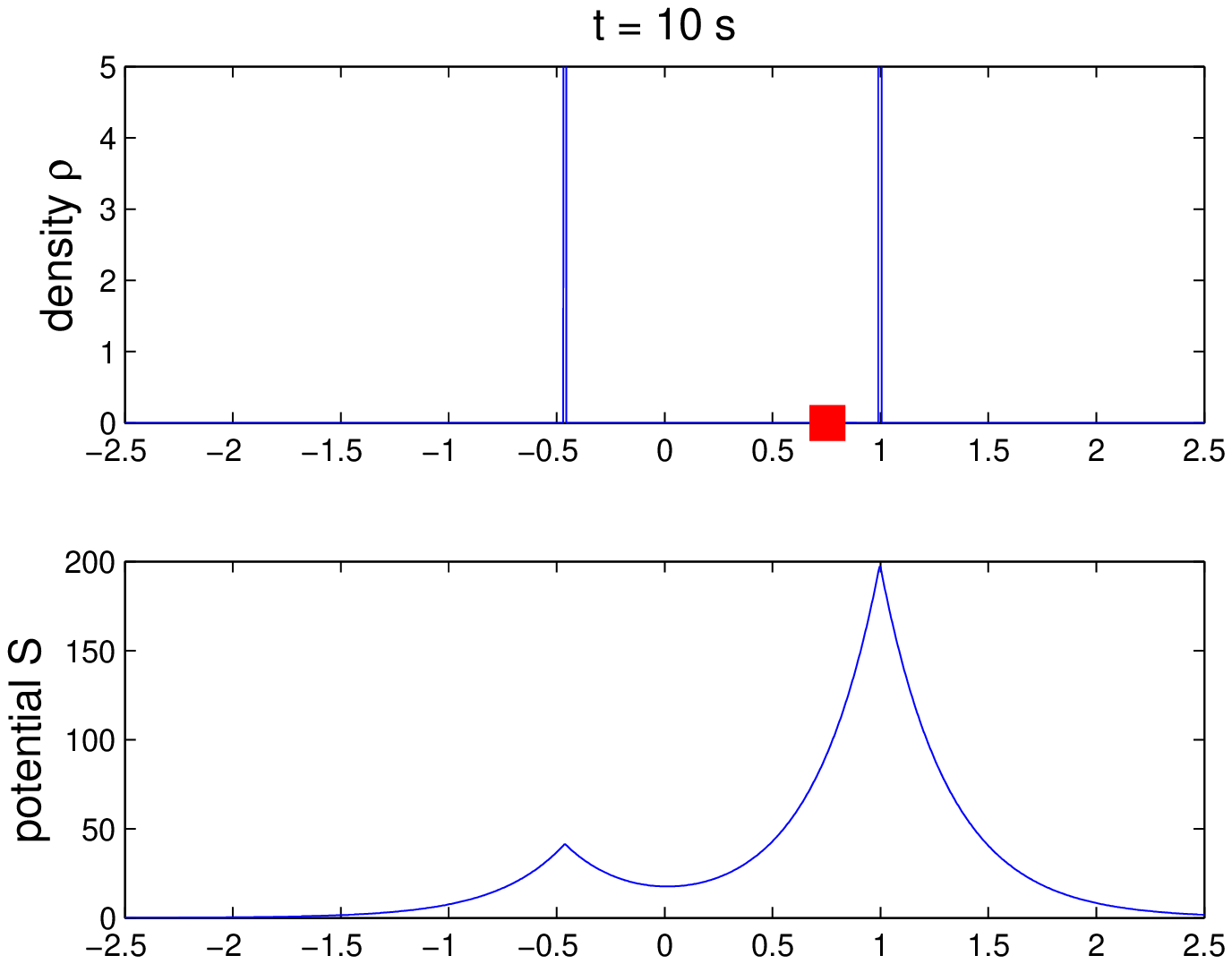}
 \includegraphics[width=5.5cm,height=6cm]{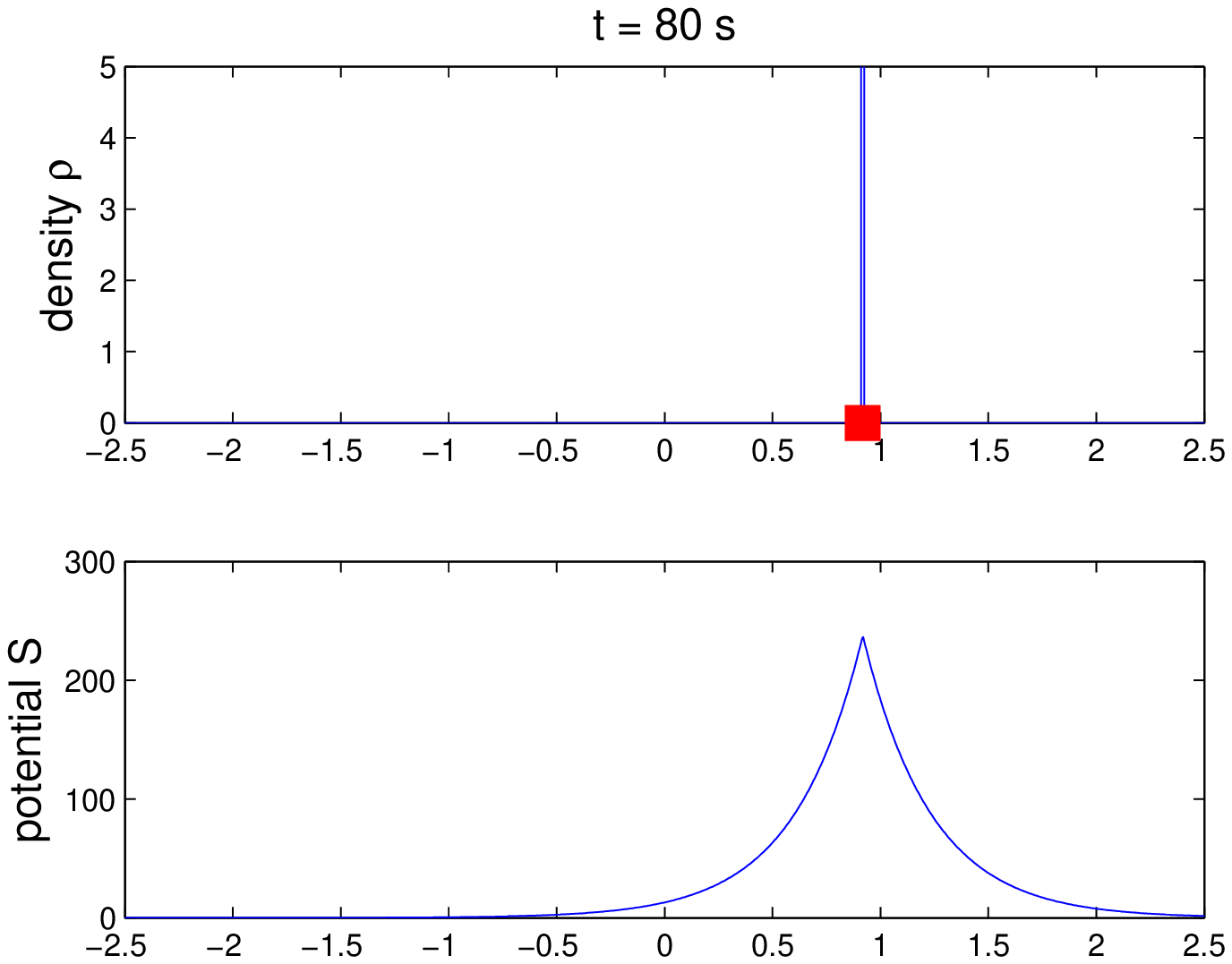}
\caption{Dynamics of the density $\rho$ (top) and of the potential $S$ (bottom) with
the dynamics of the center of mass represented by a red square.
The center of mass moves.}
\label{cm}
\end{figure}
Finally, as we have already noticed, we evidence that 
the center of mass is not fixed.
For instance, Figure \ref{cm} represents the dynamics of the density
and of the potential for an initial data made of one big bump with 
one small bump:
$$
\rho^{ini}(x)=5 e^{-20(x-1)^2}+0.5 e^{-20(x+0.5)^2}.
$$
The square shows the time dynamics of the center of mass.
We observe that the center of mass at the final time 
is not located at the same position as at the initial time.



\begin{thebibliography}{99}
\bs

\bibitem{alt} W. Alt, {\it Biased random walk models for chemotaxis and 
related diffusion approximations}, J. Math. Biol. {\bf 9}, 147--177 (1980).

\bibitem{ambrosioBV} L. Ambrosio, N. Fusco, D. Pallara, {\em Functions of
    Bounded Variation and Free Discontinuity Problems},
  Oxford University Press, 2000. 

\bibitem{bertozzi2} A.L. Bertozzi, J.A. Carrillo, Th. Laurent, {\it Blow-up
  in multidimensional aggregation equation with mildly singular interaction
kernels}, Nonlinearity {\bf 22} (2009) 683--710.

\bibitem{bj1} F. Bouchut, F. James, {\it One-dimensional transport equations 
    with discontinuous coefficients},
  Nonlinear Analysis TMA {\bf 32}  (1998), n$^o$ 7, 891--933.

\bibitem{BJpg} F. Bouchut, F. James,
{\it Duality solutions for pressureless gases, monotone scalar
conservation laws, and uniqueness},
Comm. Partial Differential Eq., {\bf 24} (1999), 2173-2189.

\bibitem{bjm} F. Bouchut, F. James, S. Mancini,
  {\it Uniqueness and weak stability for multidimensional transport 
    equations with one-sided Lipschitz coefficients},
  Ann. Scuola Norm. Sup. Pisa Cl. Sci. (5), {\bf IV} (2005), 1-25.

\bibitem{bourncalv} N. Bournaveas, V. Calvez, S. Guti\`errez, B. Perthame,
  {\it Global existence for a kinetic model of chemotaxis via dispersion and
  Strichartz estimates}, Comm. Partial Differential Eq., {\bf 33} (2008), 79--95.

\bibitem{Carrillo} J. A. Carrillo, M. DiFrancesco, A. Figalli, T. Laurent, D. Slep\v{c}ev,
{\it Global-in-time weak measure solutions and finite-time aggregation for
nonlocal interaction equations}, Duke Math. J. {\bf 156} (2011), 229--271.

\bibitem{chalubperth} F.A.C.C. Chalub, P.A. Markowich, B. Perthame, 
  C. Schmeiser,  {\it Kinetic models for chemotaxis and their 
    drift-diffusion limits}, Monatsh. Math.  {\bf 142} (2004), 123--141.

\bibitem{dolschmeis} Y. Dolak, C. Schmeiser, {\it Kinetic models for
chemotaxis: Hydrodynamic limits and spatio-temporal mechanisms}, J. Math.
Biol. {\bf 51}, 595--615 (2005).

\bibitem{erbanhwang} R. Erban, H.J. Hwang, {\it Global existence results for 
complex hyperbolic models of bacterial chemotaxis}, Disc. Cont. Dyn. 
Systems - Series B, {\bf 6} (2006), n$^o$ 6, 1239--1260.

\bibitem{erbanothmerbacterie} R. Erban, H.G. Othmer, {\it From individual 
    to collective behavior in bacterial chemotaxis}, SIAM J. Appl. Math. 
  {\bf 65} (2004/05), n$^o$ 2, 361--391.

\bibitem{filblaurpert} F. Filbet, Ph. Lauren\c{c}ot, B. Perthame, {\it 
Derivation of hyperbolic models for chemosensitive movement}, J. Math. Biol.
{\bf 50} (2005), 189--207.

\bibitem{hillenothmer} T. Hillen, H.G. Othmer, {\it The diffusion 
    limit of transport equations derived from velocity jump processes},
  SIAM J. Appl. Math. {\bf 61}  (2000), n$^o$ 3, 751--775.

\bibitem{hwang} H.J. Hwang, K. Kang, A. Stevens, {\it Global solutions of 
  nonlinear transport equations for chemosensitive movement}, SIAM J. Math.
Anal. {\bf 36} (2005), n$^o$ 4, 1177--1199.

\bibitem{note} F. James, N. Vauchelet, {\it A remark on duality solutions 
for some weakly nonlinear scalar conservation laws}, 
C. R. Acad. Sci. Paris, S\'er. I 349 (2011), 657-661, doi:10.1016/j.crma.2011.05.004

\bibitem{jamesnv} F. James, N. Vauchelet, {\it On the hydrodynamical limit 
    for a one dimensional kinetic model of cell aggregation by chemotaxis},
  to appear in Riv. Mat. Univ. Parma.

\bibitem{NPS} J. Nieto, F. Poupaud, J. Soler, {\it High field limit for
  Vlasov-Poisson-Fokker-Planck equations}, Arch. Rational Mech. Anal. {\bf 158}
(2001), 29--59.

\bibitem{NPSm3as} J. Nieto, F. Poupaud, J. Soler, {\it About uniqueness 
    of weak solutions to first order quasi-linear equations}, 
  Math. Models Methods Appl. Sci. {\bf 12} (2002), no. 11, 1599–1615.

\bibitem{othdunalt} H.G. Othmer, S.R. Dunbar, W. Alt, {\it Models of 
    dispersal in biological systems}, J. Math. Biol. {\bf 26} (1988), 263--298.

\bibitem{othhill} H.G. Othmer, T. Hillen, {\it The diffusion limit of transport 
    equations. II. Chemotaxis equations}, SIAM J. Appl. Math. {\bf 62} (2002),  1222--1250.

\bibitem{othste} H.G. Othmer, A. Stevens, {\it Aggregation, blowup, 
    and collapse: the ABCs of taxis in reinforced random walks}, 
  SIAM J. Appl. Math. {\bf 57} (1997), 1044--1081.

\bibitem{perthame} B. Perthame, {\it PDE models for chemotactic movements: 
parabolic, hyperbolic and kinetic}, Appl. Math. {\bf 49} (2004), n$^o$ 6, 539--564.

\bibitem{perthame2} B. Perthame, {\it Transport Equations in Biology}, 
  Frontiers in Mathematics. Basel: Birk\"auser Verlag.

\bibitem{poupaud} F. Poupaud, {\it Diagonal defect measures, adhesion dynamics
  and Euler equation}, Meth. Appl. Anal. {\bf 9} (2002), 533--561. 

\bibitem{pouras} F. Poupaud, M. Rascle,
  {\it Measure solutions to the linear multidimensional transport equation
    with discontinuous coefficients},
  Comm. Partial Diff. Equ. {\bf 22} (1997), 337--358.

\bibitem{nv} N. Vauchelet, {\it Numerical simulation of a 
  kinetic model for chemotaxis}, Kinetic and Related Models {\bf 3} (2010),
n$^o$ 3, 501--528.

\bibitem{volpert} A.I. Vol'pert, {\it The spaces BV and quasilinear equations},
  Math. USSR Sb., {\bf 2} (1967), 225--267.

\end{thebibliography}
\end{document}